\numberwithin{equation}{section} 
\newtheorem{thm}[equation]{Theorem} 
\newtheorem{prop}[equation]{Proposition}
\newtheorem{lemma}[equation]{Lemma}
\newtheorem{conjecture}[equation]{Conjecture}
\theoremstyle{definition}
\newtheorem{defin}[equation]{Definition}
\newtheorem{example}[equation]{Example}
\newtheorem{rmk}[equation]{Remark}
\newcommand{\Z}{\mathbb Z}
\newcommand{\Spec}{\operatorname{Spec}}
\newcommand{\Gr}{\operatorname{Gr}}
\newcommand{\Char}{\operatorname{char}}
\newcommand{\ind}{\operatorname{ind}}
\newcommand{\PGL}{\operatorname{PGL}}
\newcommand{\Mat}{\operatorname{M}}
\newcommand{\GL}{\operatorname{GL}}
\newcommand{\UD}{\operatorname{UD}}
\renewcommand{\P}{\mathbb P}
\newcommand{\rd}{\operatorname{rd}}
\newcommand{\Sym}{\mathfrak{S}}
\newcommand{\ed}{\operatorname{ed}}
\newcommand{\Gal}{\operatorname{Gal}}
\newcommand{\per}{\operatorname{per}}
\newcommand{\Mor}{\operatorname{Mor}}
\newcommand{\mc}[1]{\mathcal{#1}}
\newcommand{\cl}{\overline}
\newcommand{\on}[1]{\operatorname{#1}}
\newcommand{\SB}{\mathop{\mathrm{SB}}\nolimits}
\newcommand{\eqdef}{\mathrel{\smash{\overset{\mathrm{\scriptscriptstyle def}} =}}}
\author{Zinovy Reichstein}
\address[Reichstein]{Department of Mathematics\\
	University of British Columbia\\
	Vancouver, BC V6T 1Z2\\Canada}
\email{reichst@math.ubc.ca}
\thanks{Zinovy Reichstein was partially supported by
	National Sciences and Engineering Research Council of
	Canada Discovery grant  RGPIN-2023-03353.}
\author{Federico Scavia}
\address[Scavia]{CNRS\\
	Institut Galil\'ee\\
	Universit\'e Sorbonne Paris Nord\\
	99 avenue Jean-Baptiste Cl\'ement, 93430\\ 
	Villetaneuse, France}
\email{scavia@math.univ-paris13.fr}
\subjclass[2020]{14A15, 14E08, 14F22, 14M17, 16K20, 16H05}
\keywords{specialization, rigidity, very general position, rationality problems, rational section, torsor, reduction of structure, division algebras, symbol length, period-index problem}
\title[Specialization and rigidity]{Specialization and rigidity}
\begin{document}
	
	\begin{abstract} We describe a general method, originated by Ofer Gabber, for showing that a very general fiber in a family has certain properties. We illustrate this method with concrete examples taken from algebraic dynamics, the rationality problem for algebraic varieties, Galois theory, quadratic form theory and the theory of central simple algebras.
	\end{abstract} 
	
	\maketitle

    \tableofcontents
	
    \section{Introduction}
    Let $S$ be a scheme and $\mathcal{P}$ be a property of points of $S$. That is, for each point $s$ of $S$, property $\mathcal{P}$ is either true or false for $s$.    We would like to describe the locus $\Lambda$ of points of $S$ that have property $\mathcal{P}$. There is nothing interesting to say
    about $\Lambda$ in full generality. However, as we shall see, $\Lambda$ acquires a great deal of structure under some relatively mild conditions on $\mathcal{P}$.
 
We say that property $\mathcal{P}$ of points of $S$ satisfies 
the \emph{specialization condition} if whenever $s\in S$ has property $\mathcal{P}$ and $s_0 \in S$ is in the Zariski closure of $s$, then $s_0$ also has property $\mathcal{P}$. Given a morphism of schemes $\pi\colon S\to S'$ and a property $\mc{P}'$ of points of $S'$, we say that $\mc{P}$ \emph{descends} to $\mc{P}'$ along $\pi$ if, for all $s\in S$, the point $s$ satisfies $\mc{P}$ if and only if $\pi(s)$ satisfies $\mc{P}'$. 

  The purpose of this paper is to highlight the following observation, extracted from Gabber~\cite[Appendice]{colliot2002exposant}.
 
\begin{thm} \label{thm.main}
Let $S$ be a scheme and $\mathcal{P}$ be a property of points of $S$. Suppose that there exist a countable scheme $S'$, a morphism $\pi \colon S \to S'$ and a property $\mathcal{P}'$ of points of $S'$ such that $\mathcal{P}$ descends to $\mathcal{P}'$ along $\pi$.
Assume further that $\mc{P}'$ satisfies the specialization condition. Then there exists a countable collection $\{S_j\}_{j\geqslant 1}$ of closed subschemes of $S$ such that a point $s \in S$  
has property $\mathcal{P}$ if and only if $s$ lies in $S_j$ for some $j \geqslant 1$.
\end{thm}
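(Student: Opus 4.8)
The plan is to push the whole question down to $S'$, exploit countability there, and then pull everything back along $\pi$. First I would introduce the locus $\Lambda' \subseteq S'$ of points of $S'$ having property $\mathcal{P}'$. Since the underlying set of $S'$ is countable, so is $\Lambda'$; fix an enumeration $\Lambda' = \{s'_j\}_{j \geqslant 1}$ (allowing the list to be finite or empty, in which cases the conclusion is immediate).

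Next, for each $j \geqslant 1$ let $Z_j \subseteq S'$ be the reduced closed subscheme supported on the Zariski closure $\overline{\{s'_j\}}$. The specialization condition for $\mathcal{P}'$ says precisely that every point of $\overline{\{s'_j\}}$ has property $\mathcal{P}'$, i.e.\ $Z_j \subseteq \Lambda'$ as subsets of $S'$. Since conversely $s'_j \in Z_j$, this yields the equality of subsets $\Lambda' = \bigcup_{j \geqslant 1} Z_j$.

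Now I would set $S_j \eqdef \pi^{-1}(Z_j) = S \times_{S'} Z_j$. As the scheme-theoretic preimage of a closed subscheme under a morphism of schemes, each $S_j$ is a closed subscheme of $S$, with underlying set $\{s \in S : \pi(s) \in Z_j\}$. Chaining the equivalences then finishes the proof: a point $s \in S$ has property $\mathcal{P}$ if and only if $\pi(s)$ has property $\mathcal{P}'$ (because $\mathcal{P}$ descends to $\mathcal{P}'$ along $\pi$), if and only if $\pi(s) \in \Lambda'$, if and only if $\pi(s) \in Z_j$ for some $j \geqslant 1$, if and only if $s \in S_j$ for some $j \geqslant 1$. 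This exhibits the desired countable collection $\{S_j\}_{j \geqslant 1}$.

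I do not expect a genuine obstacle here beyond unwinding the definitions: the three hypotheses combine mechanically — countability of $S'$ supplies a countable index set, the specialization condition lets us fatten each point of $\Lambda'$ to a closed subset (hence a closed subscheme) still contained in $\Lambda'$, and the descent of $\mathcal{P}$ along $\pi$ transports the resulting description back to $S$. The one point deserving a word of care is that the set-theoretic preimage $\pi^{-1}(Z_j)$ really does carry a closed subscheme structure, which is automatic once we realize it as the fiber product $S \times_{S'} Z_j$.
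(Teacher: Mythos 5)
Your proposal is correct and follows essentially the same route as the paper: both reduce to the locus $\Lambda'\subset S'$, use the specialization condition to write it as a (countable, by countability of $S'$) union of closed subsets, and pull back along $\pi$. The only difference is cosmetic — you make explicit the decomposition $\Lambda'=\bigcup_j \overline{\{s'_j\}}$, where the paper simply cites the Stacks Project for the fact that a specialization-stable subset is a union of closed subsets.
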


\begin{proof} Let $\Lambda\subset S$ be the locus of points in $S$ which satisfy $\mc{P}$, and let $\Lambda'\subset S'$ be the locus of points in $S'$ which satisfy $\mc{P}'$. As $\mc{P}'$ satisfies the specialization condition, $\Lambda'$ is stable under 
specialization. Hence $\Lambda'$ is a union of closed subsets of $S'$; see~\cite[0EES]{stacks-project}. Since $S'$ is countable, 
this union can be taken to be countable. As $\mc{P}$ descends to $\mc{P}'$ along $\pi$, we have $\Lambda=\pi^{-1}(\Lambda')$. By the continuity of $\pi$, 
we conclude that $\Lambda$ is a countable union of closed subsets of $S$.
\end{proof}

Note that the countable scheme $S'$ and property $\mathcal{P}'$ in \Cref{thm.main} can often be constructed in a natural way. When $S$ is of finite type over a field $k$, $S$ descends to a $k'$-scheme $S'$ of finite type, where $k'$ is some finitely generated subfield of $k'$. Moreover, if property $\mathcal{P}$ is determined by some additional structure (e.g., a scheme, a morphism, etc.) which is of finite type over $S$, then after enlarging $k'$ by adjoining at most finitely many elements, $\mc{P}$ also descends to a property $\mc{P}'$ of points of $S'$.

\smallskip

Although its proof is very short, in this general form \Cref{thm.main} turns out to be quite versatile. It can either be used directly or leveraged in the following setting: 
\begin{equation}\label{eq:setting}
\begin{array}{c}
\text{We are given a morphism $f \colon X \to Y$ of algebraic stacks over $S$, and} \\
\text{property $\mc{P}$ of a point $s \in S$ is a property of the morphism $f_s \colon Y_s \to Z_s$.}
\end{array}
\end{equation}

The following three situations are special cases of \eqref{eq:setting}.
\[
\tag{A} \label{eq:A}
\text{We are given an $S$-scheme $X$, and $\mathcal{P}$ is a property of the fibers $X_s$.}
\]
Here we take $Y$ to be $S$ and $f\colon X \to Y$ to be the structure morphism.
\[
\tag{B} \label{eq:B}
\text{We are in the situation of \eqref{eq:setting}, where $f \colon X \to Y$ is a morphism of schemes.} 
\]
\begin{equation}\tag{C}\label{eq:C}
\begin{array}{c}
\text{We are given an $S$-scheme $X$, a $G$-torsor $\alpha \colon E \to X$ under some $S$-group $G$,} \\
\text{and property $\mathcal{P}$ of point $s \in S$ is a property of the $G_s$-torsor $\alpha_s \colon E_s \to X_s$.}
\end{array}
\end{equation}
Here we take $Y$ to be the classifying stack of $G$ over $S$ and $f\colon X \to Y$ to be  the morphism which classifies $\alpha$.

Gabber~\cite[Appendice]{colliot2002exposant} used the setting of \eqref{eq:A} to show that if $\alpha$ is an \'etale cohomology class on $X$, then $\alpha$ vanishes at the generic points of the geometric fiber $X_s$ if and only if the geometric point $s$ lies in a countable union of closed subschemes of $S$. Maulik and Poonen~\cite[Proposition 3.8]{maulik2012neron} showed that the same is true for the jumping locus of the N\'eron-Severi rank of $X_s$. 

In~\cite{reichstein2022behavior} we studied how essential dimension varies in a family of torsors in the setting of~\eqref{eq:C}; see also \cite{reichstein2023behavior} and Example~\ref{ex.ed}. We noticed that some of the arguments in~\cite{reichstein2022behavior} and \cite{reichstein2023behavior} are not specific to essential dimension, and can be applied to many properties of the geometric fibers of $X \to Y$ over $S$. This paper has resulted from our attempts to find a general framework where such results can be proved in a uniform way.  

The remainder of this paper is structured as follows. In Section~\ref{sect.dynamics} we give an application of Theorem~\ref{thm.main} to algebraic dynamics. Starting from \Cref{sect.morphisms} we move to the general setting of \eqref{eq:setting}. More specifically, we are given a morphism $f \colon X \to Y$ of algebraic stacks over $S$, and property $\mathcal{P}$ of point $s \in S$ is actually a property of the morphism $f_{\cl{s}} \colon X_{\cl{s}} \to Y_{\cl{s}}$, where $\cl{s}$ is a geometric point with image $s$. In order for this to be well defined, property $\mc{P}$ must satisfy the \emph{rigidity condition}, that is, be independent of the choice of geometric point $\cl{s}$ lying over $s$. 

In \Cref{sect.rationality}, we show how celebrated theorems on the behavior of rationality properties in families of smooth projective varieties (rationality, stable rationality, universal $CH_0$-triviality) fit into the setting \eqref{eq:A}. 

In \Cref{sect.rat-sections}, due to Angelo Vistoli, we assume that $f \colon X \to Y$ is a projective morphism of schemes in setting \eqref{eq:B} and consider the locus of points $s \in S$ such that $f_s$ admits a geometric rational section. When $Y$ is flat over $S$, this locus once again turns out to be a countable union of closed subschemes of $S$. We give several applications of this result in Sections~\ref{sect.examples},
\ref{sect.reduction-to-parabolic} and~\ref{sect.hilbert-irreducibility}.

Starting from~\Cref{sect.properties-of-torsors} we work in the setting \eqref{eq:C}. This motivates considering algebraic stacks, and not just schemes, in \Cref{sect.morphisms}. 

In~\Cref{sect.reduction-to-general} we consider the property that the $G$-torsor $E_{\cl{s}} \to X_{\cl{s}}$ admits reduction of structure to $H$ for a fixed group homomorphism $H \to G$. Once again, we show that this happens if and only if $\cl{s}$ lies in a countable union of closed subschemes of
$S$. (The case, where $H$ is a parabolic subgroup of $G$ is considered earlier in~\Cref{sect.reduction-to-parabolic}.) As an application, in \Cref{sect.symbol-length} we study how the symbol length of a Brauer class varies in a family of central simple algebras. 

In~\Cref{sect.strongly-unramified} we construct a strongly unramified $G$-torsor with specified properties; see \Cref{prop.combination} for a precise statement. We then give two applications of this construction with $G = \PGL_n$: in \Cref{sect.unramified-non-crossed} we show that there exists an unramified non-crossed product division algebra over a $6$-dimensional field, and in~\Cref{sect.period-index} we give a new proof of a
 result of Starr and de Jong about unramified counterexamples to the period-index problem.

The overall structure of the paper is shown in the flowchart below.
\[  \xymatrix{   &    & 
\text{Theorem~\ref{thm.main}} \ar@{=>}[dll] \ar@{=>}[d]^{\eqref{eq:setting}} & & & \\
\text{\S 3} & & \text{\S 4}
\ar@{=>}[dl]^{\eqref{eq:A}} \ar@{=>}[d] ^{\eqref{eq:B}}\ar@{=>}[dr]^{\eqref{eq:C}} 
& &  & \\ 
 & \text{\S 5} 
& \text{\S 6} \ar@{=>}[d] \ar@{=>}[dl]
& \text{\S 8} \ar@{=>}[dl] \ar@{=>}[d]  &  &  \\
 &\text{\S 7}  & \text{\S 9} \ar@{=>}[l] \ar@{=>}[d]  & \text{\S 11} \ar@{=>}[l]^{\text{g.c.}} \ar@{=>}[d] \ar@{=>}[dr] &  & \\
 & & \text{\S 10} &   \text{\S 12} & \text{\S 13} \ar@{=>}[d] \ar@{=>}[dr] & \\
 &  &  & & \text{\S 14} & \text{\S 15} } \]

Here an arrow $\S M \Longrightarrow \S N$ indicates that the material of Section M is used in Section N.

The labels \eqref{eq:setting}, \eqref{eq:A},~\eqref{eq:B}, and~\eqref{eq:C} refer to the settings, where Theorem~\ref{thm.main} is applied; see~above. 

The label ``g.c." stands for ``good characteristic". This arrow indicates that \Cref{prop.parabolic} follows from
Theorem~\ref{thm.reduction-of-structure} if the parabolic subgroup $P$ is in good characteristic but not otherwise.

\section{Notational conventions}
Let $k$ be a field, and let $X$ be a $k$-scheme.
\begin{itemize}
    \item A \emph{point} $x\in X$ is a point of the underlying topological space of $X$. For every point $x\in X$, we write $k(x)$ for the residue field of $x$, which is a field extension of $k$.   Given two points $x,y\in X$, we say that $y$ is a \emph{specialization} of $x$ if $y$ belongs to the Zariski closure of $\{x\}$ in $X$.

    \item For a field $K$, a \emph{$K$-point} $x\in X(K)$ is a morphism $\on{Spec}(K)\to X$.
    \item A \emph{geometric point} $x$ of $X$ is a $K$-point of $X$ for some algebraically closed field $K$. If $x\colon \on{Spec}(K)\to X$ is a geometric point, we sometimes also write $k(x)$ for $K$. If $P\in X$ is the image of the geometric point $x$, we say that $x$ \emph{lies over $P$}. If $Z\subset X$ is a a closed subscheme, we say that the geometric point $x$ \emph{lies in $Z$} if the image of $x$ is contained in $Z$. 
\end{itemize}

Let $V$ be a $k$-variety, that is, a geometrically reduced separated $k$-scheme of finite type. A generic point of $V$ is a point of $V$ of codimension $0$, or equivalently a point of $V$ whose closure is an irreducible component of $V$. We let $k(V)$ be the product of the function fields of the irreducible components of $V$, that is, $k(V)\coloneqq \prod_{i=1}^rk(\eta_i)$, where $\eta_1,\dots,\eta_r\in V$ are the generic points of $V$. For every field extension $k'/k$ and every $x\in V(k')$, we let $V_{k'}\coloneqq V\times_kk'$ and we let $x_{k'}\in V(k')$ be the $k'$-point of $V(k')$ induced by $x$.

An algebraic $k$-stack is an algebraic stack for the fppf topology over $k$, in the sense of \cite[Tag 026O]{stacks-project}. If $G$ is a $k$-group, that is, a group scheme of finite type over $k$, acting on a $k$-scheme $X$, we write $[X/G]$ for the quotient stack, which is an algebraic $k$-stack; see \cite[Tags 04UI, 0CQJ]{stacks-project}. In particular, the classifying stack $B_kG\coloneqq [\on{Spec}(k)/G]$ is an algebraic $k$-stack. Giving a morphism $Y\to B_kG$ is equivalent to giving an fppf $G$-torsor $X\to Y$. Recall that if the $k$-group $G$ is smooth (equivalently, geometrically reduced), every fppf $G$-torsor is \'etale-locally trivial; see \cite[Corollaire 17.16.3(ii)]{ega4}.

\section{An example from algebraic dynamics}
\label{sect.dynamics}

Let $k$ be a field, let $X$ be a $k$-variety, and let $f\colon X\to X$ be a morphism. For every field extension $K/k$ and every $x\in X(K)$ we define $V_x \subset X_K$ as the Zariski closure of $\{f^n(x)\}_{n\geqslant 0}$ in $X_K$. Thus $V_x$ is the smallest $f$-invariant closed subscheme of $X_K$ containing the $K$-point $x$.

\begin{prop} \label{prop.dynamics}
     Let $k$ be a field, let $X$ be a $k$-variety, let $f\colon X\to X$ be a morphism, and let $d\geqslant 0$ be an integer. There exists a countable collection $\{X_j\}_{j\geqslant 1}$ of closed subschemes $X_j\subset X$ such that for every field $K$ containing $k$ and every $x \in X(K)$, we have $\dim_{K}(V_x) \leqslant d$ if and only if $x \in X_j(K)$ for some $j \geqslant 1$.
\end{prop}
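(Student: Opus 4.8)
The plan is to apply Theorem~\ref{thm.main} in the setting \eqref{eq:A}. First I would take $S = X$ itself, with property $\mathcal{P}$ of a point $x \in X$ being ``$\dim_{k(x)}(V_x) \leqslant d$'', where $V_x \subset X_{k(x)}$ is the Zariski closure of the forward orbit of the tautological $k(x)$-point of $X$. The first thing to check is that this property, formulated for an arbitrary field-valued point, depends only on the underlying scheme-theoretic point: given $x \in X(K)$ with image the point $P \in X$, we have $x = P_K$ (base change of the canonical point $P \in X(k(P))$ along $k(P) \hookrightarrow K$), and since forming the orbit closure commutes with the field extension $K/k(P)$ (the orbit closure over $K$ is the base change of the orbit closure over $k(P)$, because $\operatorname{Spec}(K) \to \operatorname{Spec}(k(P))$ is faithfully flat and closed subschemes descend), the dimension $\dim_K(V_x) = \dim_{k(P)}(V_P)$ is unchanged. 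So it suffices to prove the statement for the canonical points $P \in X(k(P))$, and the resulting locus $\Lambda \subset X$ will pull back under any $\operatorname{Spec}(K) \to X$ to exactly the set of $K$-points satisfying the dimension bound.

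Next I would descend the data to a countable base. Since $X$ is of finite type over $k$ and $f$ is a $k$-morphism, there is a finitely generated subfield $k_0 \subset k$, a finite-type $k_0$-scheme $X_0$ with a $k_0$-morphism $f_0 \colon X_0 \to X_0$, and a cartesian square realizing $(X,f)$ as the base change of $(X_0,f_0)$ along $k_0 \hookrightarrow k$. The scheme $X_0$ is countable because $k_0$ is a countable field (being finitely generated over the prime field). Let $\pi \colon X \to X_0$ be the projection, and let $\mathcal{P}'$ be the property of a point $Q \in X_0$ that $\dim_{k(Q)}(V_Q) \leqslant d$, where $V_Q \subset (X_0)_{k(Q)}$ is the orbit closure under $f_0$. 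By the compatibility of orbit closures with base change noted above, for $x \in X$ with image $P = $ underlying point and $Q = \pi(P)$, we have $\dim_{k(P)}(V_P) = \dim_{k(Q)}(V_Q)$ (the orbit closure over $k(P)$ being the base change of the one over $k(Q)$, as $k(Q) \hookrightarrow k(P)$), so $\mathcal{P}$ descends to $\mathcal{P}'$ along $\pi$.

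The remaining point, which I expect to be the main obstacle, is verifying that $\mathcal{P}'$ satisfies the specialization condition: if $Q \in X_0$ has $\dim_{k(Q)}(V_Q) \leqslant d$ and $Q_0$ is a specialization of $Q$, then $\dim_{k(Q_0)}(V_{Q_0}) \leqslant d$. Here one uses upper semicontinuity of fiber dimension. Consider the orbit closure $V_Q \subset (X_0)_{k(Q)}$; since $Q$ is the generic point of its closure $\overline{\{Q\}} \subset X_0$, one can spread $V_Q$ out to a closed subscheme $\mathcal{V} \subset X_0 \times_{X_0} \overline{\{Q\}}$ — more precisely, work inside $X_0 \times_{\operatorname{Spec} k_0} \overline{\{Q\}}$ — that is $f_0$-invariant, dominates $\overline{\{Q\}}$, and has generic fiber $V_Q$. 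Restricting to the point $Q_0 \in \overline{\{Q\}}$ and using that the fiber of $\mathcal{V}$ over $Q_0$ contains the forward orbit of $Q_0$ (because $f_0$-invariance and the orbit relation are closed conditions that hold over the dense point $Q$, hence over all of $\overline{\{Q\}}$), we see that $V_{Q_0}$ is contained in the fiber $\mathcal{V}_{Q_0}$, whose dimension is at most $\dim_{k(Q)}(V_Q) \leqslant d$ by the semicontinuity of fiber dimension for the morphism $\mathcal{V} \to \overline{\{Q\}}$ (Stacks Project, 0D4H, or \cite[IV.13.1.3]{ega4}). Hence $\dim_{k(Q_0)}(V_{Q_0}) \leqslant \dim \mathcal{V}_{Q_0} \leqslant d$, as needed. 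With all three hypotheses of Theorem~\ref{thm.main} verified, we obtain a countable family $\{X_j\}$ of closed subschemes of $X$ whose union is $\Lambda = \pi^{-1}(\Lambda')$, and by the first paragraph a point $x \in X(K)$ satisfies $\dim_K(V_x) \leqslant d$ if and only if its image lies in $\Lambda$, i.e. if and only if $x \in X_j(K)$ for some $j$.
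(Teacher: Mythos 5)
Your overall strategy coincides with the paper's: reduce to scheme-theoretic points via invariance of $\dim V_x$ under field extension, descend $(X,f)$ to a countable scheme over a finitely generated subfield, and then verify the specialization condition so that \Cref{thm.main} applies. The first two steps are essentially correct, though your justification of the rigidity step is looser than it should be: the orbit closure does \emph{not} literally base change (only up to nilpotents, i.e.\ $((V_x)_L)_{\mathrm{red}} = V_{x_L}$), and ``closed subschemes descend along faithfully flat morphisms'' is not the relevant fact. What one actually needs is that a Zariski-dense set of rational points remains dense after a field extension, which the paper proves by observing that the (nilpotent) kernel of $A \to \prod_{s\in\Sigma} K(s)$ stays nilpotent after the flat base change $-\otimes_K L$. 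This is repairable, so I would call it an imprecision rather than a gap.

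The specialization step, however, contains a genuine error. You spread $V_Q$ out to a closed subscheme $\mathcal{V} \subset X_0 \times_{k_0} \overline{\{Q\}}$ dominating $\overline{\{Q\}}$ with generic fiber $V_Q$, and then claim $\dim \mathcal{V}_{Q_0} \leqslant \dim \mathcal{V}_Q$ ``by semicontinuity of fiber dimension.'' Semicontinuity (Chevalley, EGA IV 13.1.3) goes the \emph{other} way: fiber dimensions can jump \emph{up} under specialization, and the inequality you need is false for a general dominant family over a base of dimension $\geqslant 2$. Concretely, $\mathcal{V} = \mathrm{Bl}_0\,\mathbb{A}^2 \subset \mathbb{P}^1 \times \mathbb{A}^2 \to \mathbb{A}^2$ is irreducible, equals the closure of its generic fiber, has $0$-dimensional generic fiber, and has a $1$-dimensional fiber over the origin. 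What makes the inequality true in the paper is flatness over a \emph{one-dimensional} base: one first uses \cite[Proposition 7.1.9]{ega2} to replace $\overline{\{Q\}}$ by $\Spec(R)$ for a discrete valuation ring $R$ mapping its generic point to $Q$ and its closed point to $Q_0$; over a DVR the scheme-theoretic closure of the generic fiber is automatically $R$-flat (it is a ``proper model'' in the sense of \cite[Tag 02CR]{stacks-project}), and only then does \cite[Lemma 0D4H]{stacks-project} give $\dim(\mathcal{V}_K) \geqslant \dim(\mathcal{V}_\kappa)$. You cite the right lemma but apply it in a setting (non-flat, higher-dimensional base) where it does not hold; inserting the reduction to a DVR and the flatness of the closure of the generic fiber repairs the argument and brings it in line with \Cref{lem.orbit-closure-specialization}.
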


\begin{example}
Let $k$ be a field, let $E$ be an elliptic curve over $k$, let $X\coloneqq E\times_k E$, and consider the morphism $f \colon X\to X$ given by $f(a, b) = (a, a + b)$. For every field extension $K/k$ and every $K$-point $(a,b)$ of $X$, the subvariety $V_{(a, b)}\subset X_K$ is a finite set of points if $a$ is torsion, and is equal to $\{ a \} \times_K E_K$ if $a$ is not torsion. For every $n\geqslant 1$, let $X_n \coloneqq E[n] \times_k E$. The collection $\{X_n\}_{n\geqslant 1}$ satisfies the conclusion of  \Cref{prop.dynamics} for $d=0$.
\end{example}

\Cref{prop.dynamics} is not new; see~\cite[Theorem 4.1]{amerik-campana}, where $X$ is assumed to be a compact K\"ahler manifold and $f$ a dominant meromorphic map $X \dasharrow X$, or~\cite[Theorem 1.2]{bell-ghioca-reichstein}, where $X$ is assumed to be quasi-projective and $f \colon X \dasharrow X$ is a dominant rational map. Our purpose here to show that \Cref{prop.dynamics} readily follows from \Cref{thm.main}. Our proof will rely on the following two lemmas.

\begin{lemma} \label{lem.orbit-closure-rigidity}
    Let $k$ be a field, let $X$ be a $k$-variety, let $f\colon X\to X$ be a morphism, let $K/k$ be a field extension, and let $x\in X(K)$. For every field extension $L/K$, we have $((V_x)_L)_{\on{red}}=V_{x_L}$ as closed subschemes of $X_L$. In particular, the dimension of $V_x$ only depends on the image of $x$ in $X$. 
\end{lemma}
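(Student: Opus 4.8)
The plan is to establish first the scheme-theoretic identity $((V_x)_L)_{\on{red}} = V_{x_L}$ inside $X_L$, and then deduce the statement about dimensions as a formal consequence. In fact the argument will yield the slightly stronger fact that $(V_x)_L$ is already reduced, so that $(V_x)_L = V_{x_L}$ on the nose.

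One inclusion is immediate: since the orbit closure $V_x$ is $f$-invariant as a subset and carries the reduced induced structure, $f$ restricts to a morphism $V_x \to V_x$; base changing along $K \to L$ and then reducing shows that $W := ((V_x)_L)_{\on{red}}$ is an $f_L$-invariant reduced closed subscheme of $X_L$ containing $x_L$, hence $V_{x_L} \subseteq W$ by the minimality characterization of $V_{x_L}$. The reverse inclusion is the crux, and I would prove it by a direct computation of ideals. Let $z_n \in X_K$ be the image of the point $f^n(x)$. Because $X_K$ is of finite type over $K$ and $k(z_n) = K$, each $z_n$ is closed, and the (radical) ideal sheaf of $V_x = \overline{\{z_n : n \geq 0\}}$ is $\mc{I} := \bigcap_n \mc{I}_{z_n}$. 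The question being local on $X$, assume $X = \Spec A$ is affine, so that $R := A \otimes_k K = \mathcal{O}(X_K)$ and $\mc{I} = \bigcap_n \mathfrak{m}_n$ with each $\mathfrak{m}_n \subset R$ a maximal ideal with residue field $K$.

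The key input is that, $L$ being a free $K$-module, the formation of arbitrary (in particular infinite) intersections of $K$-submodules commutes with $-\otimes_K L$: fixing a $K$-basis of $L$, one checks equality of the two submodules coordinatewise. Applying this to the $\mathfrak{m}_n$, and using flatness of $L/K$ to identify $-\otimes_K L$ with extension of ideals along $R \hookrightarrow R \otimes_K L$, one obtains $\mc{I}\cdot(R\otimes_K L) = \bigcap_n \mathfrak{m}_n(R\otimes_K L)$. Now $\mathfrak{m}_n(R\otimes_K L)$ is the kernel of $R\otimes_K L \to (R/\mathfrak{m}_n)\otimes_K L = L$, hence a maximal ideal, and one checks it equals $\mc{I}_{w_n}$, where $w_n \in X_L$ is the image of $f^n(x_L) = (f^n(x))_L$. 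Thus $\mc{I}\cdot(R\otimes_K L) = \bigcap_n \mc{I}_{w_n}$ is radical, and
\[ (V_x)_L = V\bigl(\mc{I}\cdot(R\otimes_K L)\bigr) = V\bigl(\textstyle\bigcap_n \mc{I}_{w_n}\bigr) = \overline{\{w_n : n \geq 0\}} = V_{x_L} \]
as closed subschemes of $X_L$. Finally, for the last assertion: if $x \in X(K)$ has image $P \in X$, factor $x$ through the canonical point $\xi\colon \Spec k(P) \to X$ via the residue-field embedding $k(P) \hookrightarrow K$ determined by $x$, so that $x = \xi_K$ and hence $((V_\xi)_K)_{\on{red}} = V_x$; since dimension is unaffected by reduction and by base change along a field extension, $\dim_K V_x = \dim_{k(P)} V_\xi$, which depends only on $P$.

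I expect the only genuinely delicate point to be the commutation of the infinite intersection $\bigcap_n \mathfrak{m}_n$ with $-\otimes_K L$: this can fail for a general flat ring extension, and it is essential both that $L/K$ be free and that the orbit points $z_n$ have residue field exactly $K$ — the latter is precisely what makes $\mathfrak{m}_n(R\otimes_K L)$ prime and the resulting extended ideal radical. Everything else is routine bookkeeping with base change and reduced induced structures.
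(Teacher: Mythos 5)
Your proof is correct and rests on the same key point as the paper's: both arguments reduce to the affine case and use the fact that, because $L$ is free over $K$, base change along $K\to L$ commutes with the infinite intersection defining the ideal of the orbit closure (the paper packages this as ``a Zariski-dense set of $K$-points stays dense after base change,'' computed via the kernel of $A\to\prod_s K(s)$ and flatness). Your version, working directly with the maximal ideals $\mathfrak m_n$ of the $K$-rational orbit points, yields the small bonus that $(V_x)_L$ is already reduced; the final reduction to $\Spec k(P)$ is identical to the paper's.
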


\begin{proof}
    We recall that for every $K$-scheme $Z$ and every subset $\Sigma\subset Z(K)$ that is Zariski-dense in $Z_{\on{red}}$, the image of $\Sigma$ in $Z(L)$ is Zariski-dense in $(Z_L)_{\on{red}}$. Indeed, since Zariski-density may be checked Zariski locally, we may assume that $Z=\on{Spec}(A)$ for some $K$-algebra $A$. Since $\Sigma$ is Zariski-dense in $Z$, the kernel $I$ of the $k$-algebra homomorphism $A\to \prod_{s\in \Sigma} K(s)$ is a nilpotent ideal. Because $L$ is flat over $k$, the kernel of the base change map $A\otimes_KL\to\prod_{s\in \Sigma} L(s_L)$ is equal to $I\otimes_KL$, and hence is nilpotent, that is, $\Sigma_L$ is Zariski dense in $(Z_L)_{\on{red}}$.
    
    We now come to the proof of \Cref{lem.orbit-closure-rigidity}. Replacing $X$ by $X_K$ and $f$ by $f_K$, we may assume that $K=k$. Since $((V_x)_L)_{\on{red}}\subset X_L$ is a closed $f$-invariant subscheme containing $x_L$, we have the inclusion $V_{x_L}\subset ((V_x)_L)_{\on{red}}$. We may thus replace $X$ by $V_x$, that is, we may assume that $\{f^n(x)\}_{n\geqslant 0}$ is Zariski-dense in $X$, and we must show that $\{f^n(x_L)\}_{n\geqslant 0}$ in $X(L)$ is Zariski-dense in $(X_L)_{\on{red}}$. This follows from the first part of the proof.

    The final statement follows from the rest and the observation that every $K$-point $x\colon \on{Spec}(K)\to X$ with image $P\in X$ factors as $\on{Spec}(K)\to\on{Spec}(k(P))\to X$.
\end{proof}

A point $x \in X$ determines a $k(x)$-point $x \colon \Spec(k(x)) \to X$. Hence, for every $x\in X$, we may consider $V_x\subset X_{k(x)}$.

\begin{lemma} 
\label{lem.orbit-closure-specialization}
    Let $k$ be a field, let $X$ be a $k$-variety, and let $f\colon X\to X$ be a morphism. Let $x, y \in X$ be such that $y$ is a specialization of $x$. Then $\dim(V_x)\geqslant \dim(V_y)$.
\end{lemma}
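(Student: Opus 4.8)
The statement to prove is Lemma~\ref{lem.orbit-closure-specialization}: if $y$ is a specialization of $x$ in the $k$-variety $X$ and $f\colon X\to X$ is a morphism, then $\dim(V_x)\geqslant\dim(V_y)$. The plan is to reduce to a situation where we can compare the two orbit closures inside a single fiber of a well-chosen family, and then invoke the semicontinuity of fiber dimension together with Lemma~\ref{lem.orbit-closure-rigidity} to match up the scheme-theoretic objects $V_x$ and $V_y$ (which a priori live over the different residue fields $k(x)$ and $k(y)$) with fibers of that family.

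**Key steps.** First I would replace $X$ by the closure $Z\coloneqq\overline{\{x\}}$ of $x$ in $X$, equipped with its reduced structure; then $x$ is the generic point of $Z$, $y\in Z$, and $Z$ is an integral $k$-variety with function field $k(x)=k(Z)$. Note, however, that $f$ need not map $Z$ into itself, so I would instead work with the graph-type construction: consider the morphism $F\colon X\to X\times_k X\times_k\cdots$ given by the orbit map, or more concretely, for each $n$ consider the $k$-point $(f^0,f^1,\dots,f^n)\colon Z\to X^{n+1}$ and let $W_n\subset Z\times_k X$ be the closure of the image of $Z\xrightarrow{(\id,f^n)} Z\times_k X$. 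The union over $n$ of the fiber-closures recovers $V$, but since we only care about dimension it is cleaner to argue as follows. For the generic point $x$ of $Z$, the orbit closure $V_x\subset X_{k(Z)}$ is a closed subscheme; take a model, i.e.\ a closed subscheme $\mathcal{V}\subset Z\times_k X$, flat over a dense open $U\subset Z$, whose fiber over $x$ is $(V_x)_{\mathrm{red}}$ and which is $(\id\times f)$-invariant over $U$ (such a model exists by spreading out, after replacing $Z$ by a dense open and shrinking; we may also assume $U$ is chosen so that $\mathcal{V}_u$ is reduced and $f$-invariant for all $u\in U$, hence $\mathcal{V}_u=V_u$ for $u\in U(k(u))$ by Lemma~\ref{lem.orbit-closure-rigidity}). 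If $y\in U$, then by upper semicontinuity of fiber dimension for the flat (or merely finite-type) morphism $\mathcal{V}\to U$ we get $\dim(\mathcal{V}_y)\leqslant\dim(\mathcal{V}_x)=\dim(V_x)$, while $\mathcal{V}_y=V_y$ as above, giving the conclusion.

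**Handling the bad locus.** The subtlety is that $y$ need not lie in the dense open $U$ over which the model behaves well. To deal with this I would argue by Noetherian induction on $Z$: the locus $Z\setminus U$ is a proper closed subscheme, and $y$ is a specialization of some generic point $z'$ of an irreducible component $Z'$ of $Z\setminus U$; but $z'$ is itself a specialization of $x$, so by Lemma~\ref{lem.orbit-closure-rigidity} applied carefully one has $\dim(V_{z'})\leqslant\dim(V_x)$ — indeed $\dim(V_x)\geqslant\dim(\mathcal{V}_{z'})\geqslant\dim(V_{z'})$ since $\mathcal{V}_{z'}$ is a closed $f$-invariant subscheme of $X_{k(z')}$ containing $z'$, hence contains $V_{z'}$ — and then induction applied to the pair $(z',y)$ inside the smaller variety $Z'$ (or rather: inside $X$ with $z'$ now playing the role of $x$) yields $\dim(V_{z'})\geqslant\dim(V_y)$, and we chain the inequalities. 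Here the base case is exactly the case $y\in U$ handled above; termination is guaranteed since $\dim Z'<\dim Z$, or more precisely since the closed sets are strictly decreasing.

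**Main obstacle.** The genuinely delicate point is constructing the model $\mathcal{V}$ and knowing that its generic fiber is $V_x$ and that its fibers over a dense open are reduced and $f$-invariant, so that Lemma~\ref{lem.orbit-closure-rigidity} identifies them with the $V_u$. This is a standard ``spreading out'' argument (\cite[\S 8, \S 9]{ega4}-style limit arguments), but one has to be a little careful that $f$-invariance spreads out and that one does not need $f$ to preserve $Z$ — only that $\mathcal{V}\subset Z\times_k X$ is stable under $\id_Z\times f$, which does hold generically since it holds at the generic point. Once the model is in place, semicontinuity of fiber dimension (\cite[Corollaire 13.1.5]{ega4}, or \cite[Tag 0D4H]{stacks-project}) and the Noetherian induction are routine. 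An alternative, avoiding models entirely, is to observe that $\dim V_x\geqslant\dim V_z$ whenever $z$ specializes $x$ is itself the specialization statement one wants, and to prove it directly by valuation-theoretic means: choose a valuation ring $R$ with fraction field $k(x)$ and residue field admitting $k(y)$, spread $V_x$ to a closed subscheme $\mathcal{V}_R\subset X_R$ flat over $R$, and compare the special fiber with $V_y$; but this is essentially the same computation organized differently, and I would present the model/semicontinuity version as the cleanest.
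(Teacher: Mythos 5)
Your proposed proof takes a genuinely different route from the paper's, and unfortunately the route you chose as ``cleanest'' has a gap, while the route you dismissed as ``essentially the same computation organized differently'' is the one the paper actually uses, and it is the one that works.

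The paper's proof is the valuation-theoretic one. It chooses a discrete valuation ring $R$ with a map $\Spec(R)\to X$ sending the generic point to $x$ and the closed point to $y$, forms the section $\sigma\colon\Spec(R)\to X_R$, and then takes $V_\sigma$ to be the Zariski closure of the \emph{orbit of the whole section} $\Sigma=\sigma(\Spec R)$ under $f_R$. The crucial observation is that because $y_\kappa$ specializes $x_K$, this orbit closure coincides with the closure of the orbit of $x_K$ alone, which is why $V_\sigma$ is $R$-flat with generic fiber $V_{x_K}$ (a ``proper model''), and at the same time the special fiber $(V_\sigma)_\kappa$ is an $f_\kappa$-invariant closed subscheme containing $y_\kappa$, hence contains $V_{y_\kappa}$. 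Flatness over the DVR then gives $\dim((V_\sigma)_K)\geqslant\dim((V_\sigma)_\kappa)$ and the chain of inequalities follows. Your one-line sketch of this alternative (``spread $V_x$ to a closed subscheme $\mathcal{V}_R$ flat over $R$ and compare the special fiber with $V_y$'') omits exactly this point: if you only spread out $V_x$, there is no a priori reason for the special fiber of your flat model to contain $V_y$. You need to include the orbit of $\sigma$ (and hence of $y_\kappa$) in the model from the start, and then use the specialization $y_\kappa\leadsto x_K$ to see that this does not destroy $R$-flatness.

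The gap in your main argument is in the Noetherian induction step. You reduce to the case $y\in U$ by arguing that for a generic point $z'$ of a component of $Z\setminus U$ one has $\dim(V_x)\geqslant\dim(\mathcal{V}_{z'})\geqslant\dim(V_{z'})$; the second inequality is fine (once you pass to the closure $\overline{\mathcal{V}}\subset Z\times_k X$, its fiber at $z'$ is $f$-invariant and contains $z'$, hence contains $V_{z'}$), but the first inequality $\dim(V_x)\geqslant\dim(\overline{\mathcal{V}}_{z'})$ is simply not justified and is false for general models. The closure $\overline{\mathcal{V}}$ is flat only over $U$, and over the bad locus its fiber dimension can jump \emph{up}: take $Z=\mathbb{A}^2$, let $\overline{\mathcal{V}}\subset Z\times\mathbb{P}^1$ be the blowup of $Z$ at the origin; over $U=Z\setminus\{0\}$ the fibers are points, but the fiber over $0$ is $\mathbb{P}^1$. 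Chevalley's semicontinuity theorem goes in the ``wrong'' direction here, and the flat case you quote correctly for the base case (Stacks Tag 0D4H, or the locally constant local fiber dimension for flat morphisms) does not apply once you leave $U$. Worse, the inequality you need, $\dim(V_{z'})\leqslant\dim(V_x)$, is \emph{precisely the statement of the lemma} for the pair $(x,z')$, and since $\overline{\{x\}}$ has the same dimension as in the original problem, the induction hypothesis does not apply to it --- so the step is circular unless you prove it by independent means, which takes you back to the DVR argument. A minor additional point: your parenthetical ``(or merely finite-type)'' in the base case is wrong; flatness is essential there.
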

    
\begin{proof} By~\cite[Proposition 7.1.9]{ega2},
    there exist a discrete valuation ring $R$ containing $k$, with fraction field $K$ and residue field $\kappa$, and a $k$-scheme morphism $\on{Spec}(R)\to X$ sending the generic point $\eta\in \on{Spec}(R)$ to $x$ and the closed point $s\in \on{Spec}(R)$ to $y$. 
    Letting $X_R\coloneqq X\times_kR$, this morphism induces a section $\sigma\colon \on{Spec}(R)\to X_R$ of the projection $X_R\to \on{Spec}(R)$ such that $\sigma(\eta)=x_K$ and $\sigma(s)=y_\kappa$. 
    
    We let $\Sigma\subset X_R$ be the image of the closed immersion $\sigma$, that is, the reduced closed subscheme of $X_R$ with underlying set $\{x_K,y_\kappa\}$, and we let $V_\sigma\subset X_R$ be the Zariski closure of $\{f_R^n(\Sigma)\}_{n\geqslant 0}$ in $X_R$, that is, the smallest closed subscheme of $X_R$ which contains $f_R^n(\Sigma)$ for all $n\geqslant 0$. Since $y_\kappa$ is a specialization of $x_K$, for all $n\geqslant 0$ the point $f^n(y_\kappa)$ is a specialization of $f^n(x_K)$, and hence $V_\sigma$ is also the Zariski closure of $\{f_K^n(x_K)\}_{n\geqslant 0}$, that is, $V_\sigma$ is the Zariski closure of $V_\sigma$. Thus, by \cite[Tag 02CS]{stacks-project}, $V_\sigma$ is a proper model of $V_{x_K}$ in the sense of \cite[Tag 02CR]{stacks-project}, i.e. $V_\sigma$ is $R$-flat and $(V_\sigma)_K=V_{x_K}$. Since $y_\kappa$ is contained in $\Sigma$, we have $V_{y_\kappa}\subset (V_\sigma)_\kappa$. The $R$-flatness of $V_\sigma$ implies that  $\dim((V_\sigma)_K)\geqslant \dim((V_\sigma)_\kappa)$; see \cite[Lemma 0D4H]{stacks-project}. All in all, we obtain
    \[\dim(V_x)=\dim(V_{x_K})=\dim((V_\sigma)_K) =\dim((V_\sigma)_\kappa)\geqslant\dim(V_{y_\kappa})=\dim(V_y),\]
    where the first and last equalities come from \Cref{lem.orbit-closure-rigidity}.  
\end{proof}

\begin{proof}[Proof of \Cref{prop.dynamics}] Our goal is to apply \Cref{thm.main}, with $\mathcal{P}$ being the following property of a point $x \in X$: $\dim (V_x) \leqslant d$.

    Since $X$ is of finite type over $k$, both $X$ and $f$ descend to some finitely generated subfield $k' \subset k$.
    In other words $X=(X')_k$ and $f=(f')_k$ for some $k'$-variety $X'$ and some morphism of $k'$-schemes $f'\colon X'\to X'$. Let $\pi\colon X\to X'$ be the projection map. By \Cref{lem.orbit-closure-rigidity}, for every scheme-theoretic point $x \in X$, we have $\dim (V_x)=\dim(V_{\pi(x)})$. Hence, property $\mathcal{P}$ descends to property 
    $\mathcal{P}'$ along $\pi$, where by definition a point $x' \in X'$ has property $\mathcal{P}'$ if and only if
    $\dim  (V_{x'}) \leqslant d$. Note that since $k'$ is finitely generated over the prime field and $X'$ is of finite type over $k'$, the underlying set of $X'$ is countable.

   In order to apply \Cref{thm.main}, it only remains to establish the specialization condition for property $\mathcal{P}'$. To simplify the notation, we may replace $k$ by $k'$ and $X$ by $X'$, i.e., we may assume that $k$ is countable. Now we only need to establish the specialization condition for $\mathcal{P}$. This is done in~\Cref{lem.orbit-closure-specialization}. \Cref{prop.dynamics} now follows from \Cref{thm.main}.
\end{proof}

\begin{rmk} The set of morphisms $X \to X$ forms a monoid $\Mor_k(X)$ under composition. The set of powers of $f$ forms a submonoid $\langle f \rangle = \{ {\rm id}, f, f^2, f^3, \ldots \}$ of $\Mor_k(X)$. \Cref{prop.dynamics} remains valid if we replace $\langle f \rangle$ by a countable submonoid of $\Mor_k(X)$. In this setting we define $V_x$ to be the closure of $\{ m(x) \, | \, m \in M \}$. The proof goes through unchanged.
\end{rmk} 

\section{Properties of geometric morphisms} 
\label{sect.morphisms}

\begin{defin}\label{defin:geometric-morphism}
    Let $k$ be a field. A \emph{property of geometric morphisms over $k$} is a property of pairs $(k',\varphi)$, where $k'$ is an algebraically closed field containing $k$ and $\varphi\colon V\to W$ is a morphism of algebraic $k'$-stacks.
\end{defin} 

\begin{defin}\label{defin:rigidity}
    Let $k$ be a field, and let $\mc{P}$ be a property of geometric morphisms over $k$. We say that $\mc{P}$ satisfies the \emph{rigidity condition} if, for every extension of algebraically closed fields $k''/k'$ containing $k$ and for every pair $(k',\varphi)$ as in \Cref{defin:geometric-morphism}, property $\mc{P}$ holds for $(k',\varphi)$ if and only if it holds for the base change $(k'',\varphi_{k''})$.
\end{defin}

Let $k$ be a field, and let $\mc{P}$ be a property of geometric morphisms over $k$ which satisfies the rigidity condition. For every $k$-scheme $S$ and every morphism of algebraic $S$-stacks $f\colon X\to Y$, we may define a property $\mc{P}_{S,f}$ of points of $S$ as follows: For every $s \in S$, the point $s$ satisfies $\mc{P}_{S,f}$ if and only if the pair $(k(\cl{s}),f_{\cl{s}}\colon X_{\cl{s}}\to Y_{\cl{s}})$ satisfies $\mc{P}$ for some geometric point $\cl{s}$ of $S$ lying over $s$ (equivalently, for every
geometric point $\cl{s}$ of $S$ lying over $s$, because of the rigidity condition).

\begin{defin}\label{defin:specialization}
    Let $k$ be a field, let $\mc{P}$ be a property of geometric morphisms over $k$ which satisfies the rigidity condition, let $S$ be a $k$-scheme, and let $f\colon X\to Y$ be a morphism of algebraic $S$-stacks. We say that $\mc{P}$ satisfies the \emph{specialization condition} with respect to $(S,f)$ if property $\mc{P}_{S,f}$ of points of $S$ satisfies the specialization condition (in the sense of \Cref{thm.main}).
\end{defin}

\Cref{thm.main} has the following consequence for properties of geometric morphisms.

\begin{thm}\label{thm.main1}
    Let $k$ be a field, let $k_0$ be the prime field of $k$, let $\mc{P}$ be a property of geometric morphisms over $k_0$, let $S$ be a $k$-scheme of finite type, and let $f\colon X\to Y$ be a morphism of algebraic $S$-stacks of finite type.  Suppose that $\mc{P}$ satisfies the rigidity condition, as well as the specialization condition with respect to $(S,f)$. There exists a countable family $\{S_j\}_{j\geqslant 1}$ of closed subschemes of $S$ such that, for every geometric point
    $s$ of $S$, the following are equivalent:
    
    \smallskip (i) the fiber $f_s\colon X_s \to Y_s$ over $s$ has property $\mathcal{P}$, and 
    
    \smallskip (ii) $s$ lies in $S_j$ for some $j \geqslant 1$.    
\end{thm}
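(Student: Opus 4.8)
The plan is to deduce Theorem~\ref{thm.main1} from Theorem~\ref{thm.main} by constructing an appropriate countable scheme $S'$, morphism $\pi \colon S \to S'$, and property $\mc{P}'$. First I would descend the data to a finitely generated base: since $S$ is of finite type over $k$ and $f \colon X \to Y$ is a morphism of algebraic $S$-stacks of finite type, standard spreading-out arguments (as indicated in the remark following Theorem~\ref{thm.main}) show that there is a finitely generated subfield $k' \subset k$, a $k'$-scheme $S'$ of finite type, and a morphism $f' \colon X' \to Y'$ of algebraic $S'$-stacks of finite type whose base change along $\Spec k \to \Spec k'$ recovers $(S, f)$. Let $\pi \colon S \to S'$ be the projection. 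Since $k'$ is finitely generated over the prime field $k_0$, the underlying topological space of $S'$ is countable. Note that the property $\mc{P}$ is a property of geometric morphisms over $k_0 \subset k'$, so it makes sense to consider $\mc{P}_{S',f'}$, and I would take $\mc{P}' \eqdef \mc{P}_{S',f'}$.

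Next I would verify that $\mc{P}_{S,f}$ descends to $\mc{P}' = \mc{P}_{S',f'}$ along $\pi$. Fix $s \in S$ with image $s' = \pi(s) \in S'$. Choose a geometric point $\cl{s}$ of $S$ lying over $s$; composing with $\pi$ gives a geometric point $\cl{s}'$ of $S'$ lying over $s'$, with the same algebraically closed residue field, and the fiber $f'_{\cl{s}'} \colon X'_{\cl{s}'} \to Y'_{\cl{s}'}$ is canonically identified with $f_{\cl{s}} \colon X_{\cl{s}} \to Y_{\cl{s}}$ because base change along $\Spec k \to \Spec k'$ commutes with taking fibers. Hence $s$ satisfies $\mc{P}_{S,f}$ if and only if $s'$ satisfies $\mc{P}_{S',f'}$. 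Here the rigidity condition on $\mc{P}$ is what makes $\mc{P}_{S,f}$ and $\mc{P}_{S',f'}$ well defined as properties of scheme-theoretic points in the first place, and it guarantees that the answer does not depend on which geometric point over $s$ (resp. over $s'$) we chose.

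It then remains to check that $\mc{P}'$ satisfies the specialization condition in the sense of Theorem~\ref{thm.main}, i.e., that $\mc{P}_{S',f'}$ is stable under specialization of points of $S'$. This is exactly the hypothesis that $\mc{P}$ satisfies the specialization condition with respect to $(S', f')$ — but the assumption in the theorem is stated with respect to $(S, f)$, not $(S', f')$. So the one point requiring care is to transfer the specialization condition from $(S,f)$ down to $(S',f')$: given $s' \in S'$ and a specialization $s'_0$ of $s'$, I would lift $s'$ to a point $s \in S$ (possible since $\pi$ is surjective, being a base change of $\Spec k \to \Spec k'$ along $S' \to \Spec k'$), then lift the specialization $s' \rightsquigarrow s'_0$ to a specialization $s \rightsquigarrow s_0$ in $S$ — for instance by choosing a valuation ring of the residue field $k(s)$ dominating the local ring of $s'_0$ on the closure of $s'$, as in the proof of Lemma~\ref{lem.orbit-closure-specialization} via \cite[Proposition 7.1.9]{ega2} — and use that $\mc{P}_{S,f}$ holds at $s$ iff at $s'$ (just proved) together with stability at the $S$-level to conclude $\mc{P}_{S',f'}$ holds at $s'_0$. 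I expect this lifting of specializations to be the only real obstacle; everything else is formal descent. Once this is in place, Theorem~\ref{thm.main} applied to $\pi \colon S \to S'$, $\mc{P}_{S,f}$, and $\mc{P}'$ produces a countable family $\{S_j\}_{j \geqslant 1}$ of closed subschemes of $S$ such that a point $s \in S$ satisfies $\mc{P}_{S,f}$ iff $s \in S_j$ for some $j$; unwinding the definition of $\mc{P}_{S,f}$, this is precisely the claimed equivalence between (i) and (ii) for geometric points $s$ of $S$.
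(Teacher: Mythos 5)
Your overall strategy is the same as the paper's: descend $(S,f)$ to a finitely generated subfield $k' \subset k$, observe that the resulting $S'$ is countable, use rigidity to show that $\mc{P}_{S,f}$ descends along $\pi \colon S \to S'$ to a property $\mc{P}'$ of points of $S'$, verify that $\mc{P}'$ inherits the specialization condition from $\mc{P}_{S,f}$, and conclude by Theorem~\ref{thm.main}. You correctly identify the one non-formal point: transferring the specialization condition from $S$ down to $S'$.

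There is, however, a real gap in the step you flag as the main obstacle. You propose to lift $s'$ to an \emph{arbitrary} point $s \in S$ and then lift the specialization $s' \rightsquigarrow s'_0$ to a specialization of $s$. This order of operations is wrong, and for a generic choice of $s$ the specialization need not lift at all. Concretely, take $k' = \Q$, $k = \Q(t)$, $S' = \Spec \Q[x]$, $S = \Spec \Q(t)[x]$, $s'$ the generic point of $S'$, and $s'_0 = (x)$. The point $s = (x - t) \in S$ maps to $s'$, but it is a \emph{closed} point of $S$, so its only specialization is itself, and that specialization maps to $s'$, not $s'_0$. The valuation ring route you sketch fails for the same reason: the local ring of $s'_0$ on $\overline{\{s'\}}$ is $\Q[x]_{(x)}$, which under $k(s') \hookrightarrow k(s) = \Q(t)$ (sending $x \mapsto t$) becomes $\Q[t]_{(t)}$, and the only valuation ring of $\Q(t)$ dominating it is $\Q[t]_{(t)}$ itself, which does not contain $k = \Q(t)$. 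So $\Spec R \to S'$ does not lift to $\Spec R \to S$. The issue is that $\pi$ is flat but not universally closed, so generizations lift but specializations need not.

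The correct argument, which is what the paper does, reverses the order: first lift the specialization target $s'_0$ to any $s_1 \in S$ (surjectivity of $\pi$), then use flatness of $\pi$ and \cite[Tag 03HV]{stacks-project} to produce a \emph{generization} $s_0$ of $s_1$ with $\pi(s_0) = s'$. Now $s_0 \rightsquigarrow s_1$ lies over $s' \rightsquigarrow s'_0$, and because $\mc{P}_{S,f}$ is constant on fibers of $\pi$ (rigidity, which you correctly established), the fact that $s'$ satisfies $\mc{P}'$ forces $s_0$ to satisfy $\mc{P}_{S,f}$; the specialization condition for $(S,f)$ then gives it at $s_1$, hence $\mc{P}'$ at $s'_0$. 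Your argument can be repaired along these lines — either by invoking Tag 03HV directly, or by insisting that $s$ be chosen as a generic point of $\pi^{-1}(\overline{\{s'\}})$ — but as written it does not go through.
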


\begin{proof}
We must show that the locus of points $s \in S$ satisfying $\mc{P}_{S,f}$ is a countable union of closed subschemes of $S$. There exist a finitely generated subfield $k'\subset k$, a $k'$-scheme $S'$ such that $S=S'\times_{k'}k$ and a morphism $f'\colon X'\to Y'$ of algebraic $k'$-stacks such that $f=(f')_k$. We let $\pi\colon S\to S'$ be the projection map, which is a faithfully flat morphism. 

For every $s'\in S'$ and every geometric point $\cl{s}$ of $S$ such that the geometric point $\pi\circ \cl{s}$ of $S'$ lies over $s'$, the morphism $f_{\cl{s}}\colon X_{\cl{s}}\to Y_{\cl{s}}$ is a base change of $f_{s'}\colon X_{s'}\to Y_{s'}$ along the inclusion $k(s')\subset k(\cl{s})$. Since $\mc{P}$ satisfies the rigidity condition, this implies that for all $s_1,s_2\in S$ such that $\pi(s_1)=\pi(s_2)$, the point $s_1$ satisfies $\mc{P}_{S,f}$ if and only if so does $s_2$. In other words, property $\mc{P}_{S,f}$ descends along $\pi$ to a property $\mc{P}'$ of points of $S'$: namely, a point $s'\in S'$ satisfies $\mc{P}'$ if and only if for some (equivalently, every) $s\in S$ such that $\pi(s)=s'$, the point $s$ satisfies $\mc{P}_{S,f}$.

As the scheme $S'$ is countable, by \Cref{thm.main} it only remains to prove that $\mc{P}'$ satisfies the specialization condition. Let $s_0',s_1'\in S'$ be such that $s_1'$ is in the closure of $s_0'$, and suppose that $s_0'$ satisfies $\mc{P}'$. Since $\pi$ is surjective, there exists $s_1\in S$ such that $\pi(s_1)=s_1'$. As $\pi$ is flat, by \cite[Tag 03HV]{stacks-project} there exists $s_0\in S$ such that $\pi(s_0)=s_0'$. Since the point $s_0'\in S'$ satisfies $\mc{P}'$, the point $s_0\in S$ satisfies $\mc{P}_{S,f}$. As $\mc{P}_{S,f}$ satisfies the specialization condition, this implies that $s_1\in S$ also satisfies $\mc{P}_{S,f}$, and hence $s_1'$ satisfies $\mc{P}'$. This shows that $\mc{P}'$ satisfies the specialization condition and completes the proof.
\end{proof}

\begin{rmk}
    Conversely, if the equivalence of (i) and (ii) of \Cref{thm.main1} holds, then property $\mc{P}$ satisfies the rigidity condition with respect to fibers of $f$ above geometric points of $S$ and it satisfies the specialization condition with respect to $(S,f)$.
\end{rmk}

In practice it often suffices to know that the locus of geometric points $s$ of $S$ where property $\mathcal{P}$ holds is {\em contained} in a countable union of closed subschemes of $S$. The following variant of \Cref{thm.main1} provides a somewhat shorter path towards proving such statements.

\begin{defin}\label{defin:very-general}
    Let $k$ be a field, let $S$ be an irreducible $k$-scheme of finite type, and let $\Omega\subset S(k)$ be a subset. We say that $\Omega$ is \emph{very general} if there exist closed subschemes $\{S_j\}_{j\geqslant 1}$ of $S$ such that $S_j\neq S$ for all $j\geqslant 1$ and $S(k)\setminus \Omega$ is contained in $\cup_{j\geqslant 1}S_j(k)$.
\end{defin}

    If $k$ is uncountable, then $\Omega$ is very general $\Longrightarrow$ $\Omega$ is Zariski dense in $S$ $\Longrightarrow$ $\Omega \neq \emptyset$.

\begin{thm}\label{thm.main2} 
    Let $k$ be an algebraically closed field, let $\mc{P}$ be a property of geometric morphisms over $k$, let $S$ be an irreducible $k$-scheme of finite type, let $f\colon X\to Y$ be a morphism of algebraic $S$-stacks of finite type,  and let $s_0\in S(k)$ be such that $f_{s_0}$ does not satisfy $\mc{P}$. Assume that there exist:
    \begin{itemize}
        \item a Zariski dense open subscheme $U\subset S$ such that $\mc{P}$ satisfies the specialization condition with respect to $(U,f_U)$, and
        \item a morphism $\varphi\colon \on{Spec}(R)\to S$, where $R$ is a discrete valuation ring, sending the closed point of $\on{Spec}(R)$ to $s_0$ and the generic point of $\on{Spec}(R)$ to a point of $U$, such that $\mc{P}$ satisfies the specialization condition with respect to $(\on{Spec}(R),f_R)$.
    \end{itemize}
    Then the set of $s\in S(k)$ such that $f_s$ does not satisfy $\mc{P}$ is very general.
\end{thm}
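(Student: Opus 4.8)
The plan is to reduce \Cref{thm.main2} to \Cref{thm.main} by constructing a suitable countable model and then invoking the two specialization hypotheses. First I would descend the data $(S,f,s_0)$ to a finitely generated subfield $k'\subset k$ (possible because $S$ and $f$ are of finite type and $s_0\in S(k)$ involves only finitely many elements of $k$): write $S=S'\times_{k'}k$, $f=(f')_k$, and choose $s_0'\in S'(k')$ lifting $s_0$ after possibly enlarging $k'$. Let $\pi\colon S\to S'$ be the projection, a faithfully flat morphism. Just as in the proof of \Cref{thm.main1}, the rigidity condition guarantees that $\mc{P}_{U,f_U}$ descends along $\pi$ to a property $\mc{P}'$ on the countable scheme $\pi(U)=:U'$, and I want to show $\mc{P}'$ satisfies the specialization condition so that $\{s'\in U'\colon \mc{P}'(s')\}$ — equivalently its preimage in $U$, hence in $S$ — is a countable union of closed subschemes.

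The key point is that the specialization condition for $\mc{P}'$ follows from the \emph{two} hypotheses combined, not from the first alone. So the main steps are: (1) Using the specialization condition with respect to $(U,f_U)$, exactly as in \Cref{thm.main1}, I get that $\mc{P}'$ satisfies the specialization condition \emph{within $U'$}: if $s_1'$ is a specialization of $s_0'$ with both in $U'$ and $\mc{P}'(s_0')$ holds, then $\mc{P}'(s_1')$ holds (lift $s_0'$ to a point of $U$ over which $\mc{P}$ holds, lift a specialization chain using flatness of $\pi$ via~\cite[Tag 03HV]{stacks-project}, apply the specialization condition on $U$, descend back). (2) The complement $Z:=S\setminus U$ (with reduced structure) is a proper closed subscheme; its descent $Z'\subset S'$ is a proper closed subscheme of the irreducible scheme $S'$. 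I then take my countable family of closed subschemes of $S$ to be the closure in $S$ of the preimages of the countably many closed subsets of $U'$ on which $\mc{P}'$ holds, \emph{together with} all irreducible components of $Z$. Each such closure is a proper closed subscheme of $S$ provided $\mc{P}$ does not hold at the generic point of $S$ — and this is where the DVR hypothesis enters: if $\mc{P}$ held at the generic point $\eta$ of $S$, then applying the specialization condition with respect to $(\on{Spec}(R),f_R)$ along the morphism $\varphi$ (whose generic point maps into $U$, a point of which specializes from $\eta$ after possibly replacing $R$ — more precisely, $\varphi$ realizes a specialization of some point of $U$ to $s_0$, and rigidity plus the specialization condition on $U$ let me transport $\mc{P}$ from $\eta$ to that generic point of $\on{Spec}(R)$) would force $\mc{P}(s_0)$ to hold, contradicting the hypothesis $f_{s_0}\not\models\mc{P}$.

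Assembling: every geometric point $s$ of $S(k)$ with $f_s\not\models\mc{P}$ lies either in $U$, where it lies in one of the closed subschemes coming from $\mc{P}'$ (all proper in $S$ by the previous paragraph), or in $Z$, which is a finite union of proper closed subschemes. Conversely the set of $s\in S(k)$ with $f_s\models\mc{P}$ is contained in $U(k)$ and is exactly the $k$-points of the countable union of closed subschemes $\pi^{-1}(\{\mc{P}'\}$-locus$)$; so its complement in $S(k)$ contains $S(k)\setminus\bigcup_j S_j(k)$, which is precisely the assertion that the non-$\mc{P}$ locus is very general in the sense of \Cref{defin:very-general}.

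The main obstacle I anticipate is step (2): carefully arranging that the DVR hypothesis rules out $\mc{P}$ holding at the generic point of $S$ (and hence at the generic points of $S'$, and hence at the generic point of the image in $U'$ of any dominant point). One must be attentive that the specialization condition with respect to $(\on{Spec}(R),f_R)$ is a statement about the two points of $\on{Spec}(R)$, and to chain it with the specialization condition on $U$ one needs the generic point of $\on{Spec}(R)$ to map to a point of $U$ that is itself a specialization of (the image in $S$ of) the generic point — which follows since $S$ is irreducible, so its generic point dominates everything, and rigidity makes $\mc{P}$ insensitive to the resulting residue field extension. A minor secondary point is making sure the closures in $S$ of preimages under $\pi$ of proper closed subsets of $U'$ remain proper in $S$; this is automatic since $\pi$ is faithfully flat and these subsets miss the generic point of $S'$.
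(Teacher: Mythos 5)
Your argument is correct and matches the paper's approach: both use the $(U,f_U)$ specialization hypothesis to see that the $\mc{P}$-locus over $U$ is a countable union of closed subsets, and both use the DVR hypothesis to see that these subsets are proper. The one minor deviation is that you rule out $\mc{P}$ at the generic point of $S$ by chaining the $(U,f_U)$ and DVR specialization hypotheses, whereas the paper rules out $\mc{P}$ more directly at $\eta=\varphi(\text{generic point of }\operatorname{Spec}(R))\in U$ using only the DVR hypothesis (either suffices, since a proper closed subset of the irreducible $U$ misses every point it fails to contain); you also reproduce the descent argument by hand rather than just citing \Cref{thm.main1} applied to $f_U$. Be aware of a sign slip in your final assembly paragraph — the set of $s$ with $f_s\models\mc{P}$ (not $f_s\not\models\mc{P}$) is what is covered by the $\overline{\pi^{-1}(W_j)}$ together with $Z$, and the claim that this set is contained in $U(k)$ is neither needed nor justified — but this does not affect the correctness of the argument since you include the components of $Z$ in your family.
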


\begin{proof}
    Let $\eta\in U$ be the image of the generic point of $\on{Spec}(R)$ under $\varphi$, and let $\cl{\eta}$ be a geometric point of $U$ lying over $\eta$. We claim that $f_{\cl{\eta}}$ does not satisfy $\mc{P}$. Indeed, otherwise the specialization condition 
 tells us that the geometric morphism $f_{s_0}$ also satisfies $\mc{P}$, contradicting our assumption. This contradiction proves the claim.
    
    Let $\Omega\subset S(k)$ be the set of $s\in S(k)$ such that $f_s$ does not satisfy $\mc{P}$. By \Cref{thm.main1} applied to $f_U$, there exists a countable collection of closed subsets $\{Z_j\}_{j\geqslant 1}$ of $U$ such that every geometric point $u$ of $U$ such that $f_u$ satisfies $\mc{P}$ if and only if $u$ lies in $Z_j$ for some $j\geqslant 1$. In particular, for every $j\geqslant 1$, the geometric point $\cl{\eta}$ does not lie in $Z_j$, and hence $Z_j\neq U$. We conclude that the set $U(k)\setminus \Omega$ is very general in $U$. Since $U$ is open and dense in $S$, the set $S(k)\setminus \Omega$ is also very general.
\end{proof}

We conclude this section with a valuative criterion for the specialization condition.

\begin{lemma}\label{reduce-to-dvr}
    Let $k$ be a field, let $\mc{P}$ be a property of geometric morphisms over $k$ which satisfies the rigidity condition, let $S$ be a locally noetherian $k$-scheme, and let $f\colon X\to Y$ be a morphism of algebraic $S$-stacks. The following statements are equivalent.
    \begin{itemize}
        \item[(1)] Property $\mc{P}$ satisfies the specialization condition with respect to $(S,f)$.
        \item[(2)] For every morphism $\on{Spec}(R)\to S$, where $R$ is a complete discrete valuation ring $R$, property $\mc{P}$ satisfies the specialization condition with respect to $(\on{Spec}(R),f_R)$.
    \end{itemize}
\end{lemma}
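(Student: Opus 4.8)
The plan is to prove the two implications separately. Beyond unwinding the definitions, the only inputs will be the standard fact — already used in the proof of \Cref{lem.orbit-closure-specialization} — that every specialization in a locally noetherian scheme is realized by a morphism from the spectrum of a discrete valuation ring, together with the fact that completion of a DVR preserves both points of its spectrum. Two elementary observations frame the argument. First, $\Spec(R)$ for a discrete valuation ring $R$ has exactly two points, the generic point $\eta$ and the closed point $\mathfrak{s}$, and $\mathfrak{s}$ is the only nontrivial specialization; so, for any morphism $g\colon\Spec(R)\to S$, property $\mc{P}$ satisfies the specialization condition with respect to $(\Spec(R),f_R)$ if and only if ``$\eta$ satisfies $\mc{P}_{\Spec(R),f_R}$'' implies ``$\mathfrak{s}$ satisfies $\mc{P}_{\Spec(R),f_R}$''. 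Second, for $t\in\{\eta,\mathfrak{s}\}$ and a geometric point $\cl{t}$ of $\Spec(R)$ over $t$, base change identifies $(f_R)_{\cl{t}}$ with $f_{g\circ\cl{t}}$, where $g\circ\cl{t}$ is a geometric point of $S$ over $g(t)$; by the rigidity condition it follows that $t$ satisfies $\mc{P}_{\Spec(R),f_R}$ if and only if $g(t)$ satisfies $\mc{P}_{S,f}$.

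For $(1)\Rightarrow(2)$, I would take $g\colon\Spec(R)\to S$ with $R$ a complete discrete valuation ring and assume that $\eta$ satisfies $\mc{P}_{\Spec(R),f_R}$. By the second observation $g(\eta)$ satisfies $\mc{P}_{S,f}$; since $g(\mathfrak{s})$ is a specialization of $g(\eta)$ by continuity of $g$, hypothesis (1) gives that $g(\mathfrak{s})$ satisfies $\mc{P}_{S,f}$, and hence $\mathfrak{s}$ satisfies $\mc{P}_{\Spec(R),f_R}$. In view of the first observation this is precisely the specialization condition with respect to $(\Spec(R),f_R)$.

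For $(2)\Rightarrow(1)$, I would let $s_1\in S$ satisfy $\mc{P}_{S,f}$ and let $s_0\in S$ be a specialization of $s_1$, which we may take to be distinct from $s_1$ (the case $s_0=s_1$ being trivial). As in the proof of \Cref{lem.orbit-closure-specialization}, there exist a discrete valuation ring $R_0$ and a morphism $\Spec(R_0)\to S$ taking the generic point to $s_1$ and the closed point to $s_0$. Replacing $R_0$ by its completion $R$ — a complete discrete valuation ring, faithfully flat over $R_0$ — the generic and closed points of $\Spec(R)$ still map to $s_1$ and $s_0$ respectively, so we obtain $g\colon\Spec(R)\to S$ with $g(\eta)=s_1$ and $g(\mathfrak{s})=s_0$. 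Now apply (2) to $g$: since $g(\eta)=s_1$ satisfies $\mc{P}_{S,f}$, the point $\eta$ satisfies $\mc{P}_{\Spec(R),f_R}$, hence so does $\mathfrak{s}$, and therefore $s_0=g(\mathfrak{s})$ satisfies $\mc{P}_{S,f}$. This shows that $\mc{P}_{S,f}$ satisfies the specialization condition, which is (1).

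I do not anticipate a serious obstacle: the proof is essentially a formal unwinding of the definitions, with the realization of a specialization by a discrete valuation ring as the only external ingredient. The sole point needing a little care is that hypothesis (2) supplies only \emph{complete} discrete valuation rings, so the DVR produced by the argument of \Cref{lem.orbit-closure-specialization} must first be completed; this loses no points of the spectrum because completion of a DVR is a faithfully flat, local extension, so the generic and closed points are preserved. The rigidity condition is invoked exactly once in each direction, to transfer the validity of $\mc{P}$ between a fiber of $f_R$ and the corresponding fiber of $f$.
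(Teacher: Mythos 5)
Your proposal is correct and follows essentially the same route as the paper's proof: rigidity is used to identify $\mc{P}_{\Spec(R),f_R}$ at the generic and closed points of $\Spec(R)$ with $\mc{P}_{S,f}$ at their images, which gives $(1)\Rightarrow(2)$ directly and reduces $(2)\Rightarrow(1)$ to realizing a specialization in the locally noetherian scheme $S$ by a morphism from the spectrum of a discrete valuation ring (EGA II, Proposition 7.1.9), followed by passing to the completion. The remark that completion preserves the generic and closed points is exactly the paper's final step, so no further comment is needed.
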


\begin{proof} Let $\on{Spec}(R)\to S$ be a morphism,  where $R$ is a complete discrete valuation ring, and let $f_R\colon X_R\to Y_R$ be the base change of $f$ along $\varphi$. By the rigidity condition for $\mc{P}$, the generic (resp. closed) point of $\on{Spec}(R)$ satisfies  property $\mc{P}_{\on{Spec}(R),f_R}$ if and only if its image in $S$ satisfies property $\mc{P}_{S,f}$ for its image in $S$. This shows that (1) $\Longrightarrow$ (2). Moreover, it reduces the proof of the implication (2) $\Longrightarrow$ (1) to the following: For any two points $s_0,s_1\in S$ such that $s_1$ is a specialization of $s_0$, there exist (i) a complete discrete valuation ring $R$ and (ii) a morphism $\on{Spec}(R)\to S$ which sends the generic (resp. closed) point of $\on{Spec}(R)$ to $s_0$ (resp. $s_1$). Since $S$ is locally noetherian, this is true: by \cite[Proposition 7.1.9]{ega2}, one may find there exist a (not necessarily complete) discrete valuation ring $R$ and a morphism $\on{Spec}(R)\to S$ satisfying (ii). Now replace $R$ by its completion.
\end{proof}

	\section{Rationality properties}
	\label{sect.rationality}
	
	The specialization method of Voisin \cite{voisin2015unirational}, subsequently developed by Colliot-Th\'el\`ene and Pirutka \cite{colliot2016hypersurfaces}, allows one to prove that a very general member of a family of projective varieties is not universally $CH_0$-trivial (in particular, not stably rational; see \Cref{rationality-definitions}) given the existence of a suitable mildly singular fiber in the family. In particular, for a smooth projective family  $X\to S$, the locus of universally $CH_0$-trivial fibers is a union of countably many closed subschemes of $S$. Nicaise--Shinder \cite{nicaise2019motivic} and Kontsevich--Tschinkel \cite{kontsevich2019specialization} proved that the same holds for stable rationality and rationality of fibers in smooth projective families, respectively. 
    
    In this section, we explain how \Cref{thm.main} can be used to simplify parts of the proofs of the above theorems. The novelty here is purely expositional: our main point is that \Cref{thm.main} allows one to eliminate the use of Hilbert schemes and Chow varieties. More precisely, let $X\to S$ be a smooth projective morphism of varieties over a field $k$ of characteristic zero. The goal is to prove that the images of the geometric points $s$ of $S$ such that $X_s$ is $k(s)$-rational (or $k(s)$-stably rational, or universally $CH_0$-trivial) lie in a union of countably many closed subschemes of $S$. To achieve this, the first step is to prove that these rationality properties satisfy the specialization condition. The second step consists in combining the specialization condition with a Hilbert scheme or Chow variety argument to obtain the theorem. Our point is that the second step, which can get quite technical, can be simplified: in view of the specialization condition established in the first step, it suffices to prove the rigidity condition, which is a straightforward task, and then appeal to~\Cref{thm.main}. The same idea applies to the \enquote{specialization method} of Voisin and Colliot-Th\'el\`ene--Pirutka. Here we use the more subtle specialization condition proved by Colliot-Th\'el\`ene and Pirutka; see~\Cref{ch0-trivial-specialization}. 
    
	\subsection{Definitions and rigidity condition}
	
	\begin{defin}\label{rationality-definitions}
		Let $k$ be a field, and let $V$ be a $d$-dimensional geometrically integral $k$-variety. We say that $V$ is
		\begin{enumerate}[label=(\roman*)]
			\item $k$-\emph{rational} if there exist an integer $n\geqslant 0$, Zariski dense open subschemes $U\subset V$ and $U'\subset \P^n_k$, and a $k$-isomorphism $U\cong U'$;
			\item $k$-\emph{stably rational} if there exists an integer $m\geqslant 0$ such that $V\times_k \P^m_k$ is $k$-rational;
			\item\label{ch0-trivial-def} (for proper $V$) \emph{universally $CH_0$-trivial} if for every field extension $F/k$, the degree map $CH_0(V_F)\to \Z$ is an isomorphism.
		\end{enumerate}        
        We have (i) $\Rightarrow$ (ii) $\Rightarrow$ (iii). 
	\end{defin}

	\begin{rmk}\label{rmk-decomposition-diagonal}
		Let $k$ be a field, and let $V$ be a smooth proper geometrically integral $k$-variety. We say that $V$ \emph{admits a Chow decomposition of the diagonal} if there exist a divisor $D\subset V$, a $d$-dimensional cycle $Z\in Z_d(V\times_k V)$ supported on $D\times_k V$, and a zero-cycle $\alpha$ of degree $1$ such that $\Delta_V = Z+V\times_k \alpha$ in $CH_d(V\times_k V)$. 
		
        By \cite[Proposition 1.4]{colliot2016hypersurfaces}, the following are equivalent:
        \begin{enumerate}
            \item $V$ is universally $CH_0$-trivial;
            \item $V$ admits a zero-cycle of degree $1$ and the degree map $CH_0(V_{k(V)})\to\Z$ is an isomorphism;
            \item $V$ admits a Chow decomposition of the diagonal.
        \end{enumerate}
	\end{rmk} 
	
	\begin{prop}\label{rationality-properties-rigidity}
		Let $k$ be a field, and let $\mc{P}$ be one of the properties of geometric morphisms over $k$ in \Cref{rationality-definitions}. Then $\mc{P}$ satisfies the rigidity condition.
	\end{prop}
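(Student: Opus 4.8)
The plan is to verify rigidity for each of the three properties by showing the following: for every extension $k''/k'$ of algebraically closed fields containing $k$ and every geometrically integral $k'$-variety $V$ (assumed smooth and proper in the case of universal $CH_0$-triviality), $V$ has the property over $k'$ if and only if $V_{k''}$ has it over $k''$. This reduction is legitimate because the auxiliary hypotheses defining the relevant class of varieties --- being a $d$-dimensional geometrically integral variety, and, in the $CH_0$ case, being smooth and proper --- are themselves preserved and reflected by the faithfully flat base change $\operatorname{Spec}(k'')\to\operatorname{Spec}(k')$: finite type, separatedness, smoothness and properness all descend along faithfully flat quasi-compact morphisms, while a finite-type $k'$-scheme is geometrically integral of dimension $d$ precisely when its base change to $k''$ is (here we use that $k'$ is perfect, being algebraically closed). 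One also observes that if the target of the geometric morphism is not the spectrum of the base field then none of the properties can hold, so there is nothing to check in that case.

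For the forward implication one simply base changes the witnessing data. A birational isomorphism between a dense open of $V$ and a dense open of $\mathbb{P}^n_{k'}$ base changes to one over $k''$, since $V_{k''}$ is again irreducible; this handles rationality, and stable rationality follows by applying this to $V\times_{k'}\mathbb{P}^m_{k'}$. For universal $CH_0$-triviality I would use the characterization in \Cref{rmk-decomposition-diagonal}: a Chow decomposition of the diagonal $\Delta_V = Z + V\times_{k'}\alpha$ in $CH_d(V\times_{k'} V)$, with $Z$ supported on $D\times_{k'} V$ and $\deg\alpha = 1$, base changes along $k''/k'$ to a Chow decomposition of the diagonal of $V_{k''}$, the support condition and the degree being unaffected; hence $V_{k''}$ is universally $CH_0$-trivial.

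The reverse implication is the substantive one, and in all three cases it follows the same spreading-out-and-specialization pattern, for which it is essential that $k'$ is algebraically closed. Whichever property $V_{k''}$ enjoys is witnessed by a finite amount of data with coefficients in a finitely generated subextension $k'\subseteq L\subseteq k''$: for rationality, a birational isomorphism $V_{k''}\dashrightarrow\mathbb{P}^n_{k''}$ and its inverse; for universal $CH_0$-triviality, the cycles $Z$, $\alpha$, $D$ together with the finitely many $(d+1)$-dimensional subvarieties and rational functions realizing the rational equivalence $\Delta_{V_{k''}} - Z - V_{k''}\times\alpha = 0$; and stable rationality again reduces to the rationality case applied to $V\times_{k'}\mathbb{P}^m_{k'}$. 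Since $k'$ is algebraically closed and $L/k'$ is finitely generated, I can write $L = k'(T)$ for a geometrically integral $k'$-variety $T$, and the chosen data, being defined over $L$, spreads out after shrinking $T$ to a dense open subscheme: one obtains a birational isomorphism $V_T\dashrightarrow\mathbb{P}^n_T$ of $T$-schemes, respectively a relative Chow decomposition of the diagonal of $V_T\to T$. After shrinking $T$ further, by generic flatness and upper semicontinuity of fibre dimension I may assume that all the relevant loci (the indeterminacy loci of the birational map and its inverse; the supports and divisors entering the cycle relation) are flat over $T$ with fibres of the expected dimension. Then the restriction over any $k'$-point $t\in T(k')$ --- which exists and is dense since $k'$ is algebraically closed --- is a birational isomorphism $V = V_t\dashrightarrow\mathbb{P}^n_{k'}$, respectively a Chow decomposition of the diagonal of $V$; this gives $k'$-rationality, respectively (via \Cref{rmk-decomposition-diagonal}) universal $CH_0$-triviality, of $V$.

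The main obstacle is this final specialization step: one must know that birationality of a rational map, and the validity of a decomposition of the diagonal, survive restriction to a sufficiently general closed fibre of the spread-out family. This is the standard mechanism behind the specialization arguments of Voisin and Colliot-Th\'el\`ene--Pirutka; I would either carry out the requisite constructibility and semicontinuity checks directly or quote them from \cite{voisin2015unirational,colliot2016hypersurfaces}.
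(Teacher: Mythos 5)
Your proposal follows the same strategy as the paper: the forward direction is immediate by base change of the witnessing data, and the reverse direction is a spread-out-and-specialize argument, using the fact that $k'$ is algebraically closed so the finitely generated intermediate extension $L/k'$ has a model $T$ with a dense set of $k'$-points. For rationality and stable rationality your argument matches the paper's essentially verbatim: spread out the birational isomorphism to a fiberwise-dense open over a finite-type $k'$-algebra, restrict to a $k'$-point.

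For universal $CH_0$-triviality the overall scheme is also the same, but the mechanism you propose for the final specialization step is not quite the right one, and you yourself flag it as the gap. You plan to arrange ``flatness of the relevant loci and fibres of the expected dimension'' and then invoke ``constructibility and semicontinuity checks'' as in Voisin or Colliot-Th\'el\`ene--Pirutka. That is considerably heavier and also not really what is needed: specializing a rational equivalence to a fibre is not a flatness/semicontinuity phenomenon but a Gysin-pullback phenomenon. The paper handles this cleanly by choosing the parameter variety $W$ (your $T$) to be \emph{smooth} over $k'$ (possible because $k'$ is algebraically closed hence perfect), spreading the decomposition to Voisin's ``generic decomposition of the diagonal'' of the second projection $V\times_{k'}W\to W$, and then applying the lci (Gysin) pullback $x^*$ for a $k'$-point $x\in W(k')$, which is well defined on Chow groups because $W$ is smooth and sends the relative decomposition to a decomposition of the diagonal of $V$. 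If you insist on smoothness of $T$ at the outset and replace your flatness argument by Gysin pullback, your proof closes the gap and coincides with the paper's. As stated, the references you cite contain the specialization results for discrete valuation rings, which are both deeper than and somewhat orthogonal to what is needed here (and in fact are invoked elsewhere in the paper, in Theorem~\ref{countable-union-rationality-properties}, not in this proposition).
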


    More precisely, in case (i) (resp. (ii), resp. (iii)) of \Cref{rationality-definitions}, property $\mc{P}$ is defined by: a pair $(k', \varphi\colon V\to W)$ as in \Cref{defin:geometric-morphism} satisfies $\mc{P}$ if and only if $W=\on{Spec}(k')$ and $V$ is $k'$-rational (resp. $k'$-stably rational, resp. universally $CH_0$-trivial).
    
	\begin{proof}
It suffices to prove the following: If $K/k$ is an extension of algebraically closed fields and $V$ a smooth projective geometrically integral $k$-variety such that $V_K$ satisfies $\mc{P}$, then so does $V$. 

Suppose that $\mc{P}$ is the rationality property, as in part (i) of
\Cref{rationality-definitions}. A spreading-out argument yields a smooth $k$-algebra of finite type $k\subset A\subset K$, two
$A$-fiberwise dense open subschemes $U\subset V_A$ and $U'\subset \P^d_A$, and an isomorphism of $A$-schemes $\varphi\colon U\xrightarrow{\sim}U'$. Here $d = \dim(V)$, as in \Cref{rationality-definitions}.

		Because $k$ is algebraically closed and $A$ is of finite type over $k$, there exists a $k$-point $x\colon \on{Spec}(k)\to \on{Spec}(A)$. Pulling back along $x$ shows that $V$ satisfies $\mc{P}$.

The same argument applies in the case, where $\mc{P}$ is the property of being stably rational, as in \Cref{rationality-definitions}(ii): we just need to replace $V$ by $V \times \mathbb{P}^m$ for a suitable $m \geqslant 0$.
		
		Finally, assume that $\mc{P}$ is the property of being universally $CH_0$-trivial, as in  \Cref{rationality-definitions}(iii). We will use the equivalent formulation of $\mc{P}$ involving the Chow decomposition of the diagonal, as recalled in~\Cref{rmk-decomposition-diagonal}. Since Chow groups commute with filtered colimits of field extensions, there exists a finitely generated field subextension $k\subset F\subset K$ such that $V_F$ admits a decomposition of the diagonal. Let $W$ be a smooth integral $k$-variety with function field $F$. By a limit argument, shrinking $W$ if necessary, we may assume that the second projection $V\times_kW\to W$ admits a generic decomposition of the diagonal, in the sense of Voisin \cite[(3.52)]{voisin2019birational}: There exist
        \begin{itemize}
            \item a rational section of the second projection $V\times_kW\to W$ with image $Z\subset V\times_kW$,
            \item a proper closed subscheme $C\subset V\times_kW$, and
            \item a cycle $T \subset V\times_kV\times_kW \to V\times_kW$ which is supported over $C$,
        \end{itemize} 
        such that $V\times\Delta = V\times_kZ + T$ in $CH(V\times_kV\times_kW)$. Shrinking $W$ if necessary, we may assume that this rational section is a morphism. Let $x\colon \on{Spec}(k)\to W$ be a $k$-point. Because $W$ is smooth, pullback along $x$ is defined and yields the required decomposition of the diagonal for $V$. 
	\end{proof}
	
	\subsection{Rationality properties in smooth projective families}

	\begin{thm}[Nicaise--Shinder, Kontsevich--Tschinkel, Voisin, Colliot-Th\'el\`ene--Pirutka]\label{countable-union-rationality-properties}
Let $k$ be a field of characteristic zero, let $S$ be a $k$-scheme of finite type, and let $f\colon X\to S$ be a smooth projective morphism with geometrically integral fibers. For every geometric point $s\in S$, consider the following assertions:
\begin{enumerate}
    \item[$(i)$] the $k(s)$-variety $X_s$ is rational;
    \item[$(ii)$] the $k(s)$-variety $X_s$ is stably rational;
    \item[$(iii)$] the $k(s)$-variety $X_s$ is universally $CH_0$-trivial.
\end{enumerate}
Let $(\ast)$ be one of $(i)$, $(ii)$, or $(iii)$. There exists a countable family of closed subschemes $\{S_j\}_{j\geqslant 1}$ of $S$ such that, for every geometric point $s$ of $S$, the $k(s)$-variety $X_s$ satisfies $(*)$ if and only if $s$ lies in one of the $S_j$.
	\end{thm}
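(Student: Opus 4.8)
The plan is to deduce \Cref{countable-union-rationality-properties} directly from \Cref{thm.main1}, viewing each of the properties $(i)$, $(ii)$, $(iii)$ as a property of geometric morphisms over the prime field $k_0 = \mathbb{Q}$ in the sense of \Cref{defin:geometric-morphism}: namely, a pair $(k',\varphi\colon V\to W)$ has property $\mc{P}$ precisely when $W = \on{Spec}(k')$, $V$ is smooth projective geometrically integral, and $V$ is $k'$-rational (resp. $k'$-stably rational, resp. universally $CH_0$-trivial). To invoke \Cref{thm.main1} we must verify its two hypotheses for the morphism $f\colon X\to S$ (here $Y = S$, so we are in setting \eqref{eq:A}): the \emph{rigidity condition} and the \emph{specialization condition with respect to $(S,f)$}.

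The rigidity condition is exactly the content of \Cref{rationality-properties-rigidity}, which is already proved in the excerpt, so nothing further is needed there. For the specialization condition, I would first reduce, via \Cref{reduce-to-dvr}, to the case where $S = \on{Spec}(R)$ for $R$ a complete discrete valuation ring (legitimate since a $k$-scheme of finite type is locally noetherian); concretely this amounts to showing that if the generic fiber $X_\eta$ over the fraction field is geometrically rational (resp. stably rational, resp. universally $CH_0$-trivial), then so is the special fiber $X_s$. This is precisely the specialization theorem in each case: for $(iii)$ it is the Colliot-Th\'el\`ene--Pirutka specialization statement (cited as \Cref{ch0-trivial-specialization}), for $(ii)$ it is Nicaise--Shinder, and for $(i)$ it is Kontsevich--Tschinkel; one invokes the appropriate one after a standard descent to reduce the algebraically closed residue/function fields appearing in the definition of $\mc{P}_{S,f}$ to the finitely generated situation handled by those theorems. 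Having checked both conditions, \Cref{thm.main1} applied with $S' $ a model of $S$ over a finitely generated subfield of $k$ produces the desired countable family $\{S_j\}_{j\geqslant 1}$ of closed subschemes, and the equivalence of $(\ast)$ with ``$s$ lies in some $S_j$'' is exactly conclusion (i) $\Leftrightarrow$ (ii) of that theorem.

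The main obstacle is purely bookkeeping about fields: the properties in \Cref{rationality-definitions} as stated involve base change to \emph{algebraically closed} fields (since $\mc{P}_{S,f}$ is evaluated on geometric fibers), whereas the cited specialization theorems of Nicaise--Shinder, Kontsevich--Tschinkel and Colliot-Th\'el\`ene--Pirutka are phrased for a dvr with, say, algebraically closed or at least reasonable residue field and do not literally mention geometric points. One must therefore thread the rigidity condition through: given a complete dvr $R$ with a map to $S$ and a geometric point over the generic point at which $X$ becomes rational (etc.), one spreads out / specializes to deduce the same over the closed point's geometric fiber, using that by rigidity the property is insensitive to further algebraically closed extension and, via a limit argument, descends to a finitely generated situation where the classical theorems apply. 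This is routine but is the one place where care is required; everything else is a direct citation of \Cref{thm.main1}, \Cref{reduce-to-dvr} and \Cref{rationality-properties-rigidity}.
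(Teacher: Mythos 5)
Your proposal is correct and follows essentially the same route as the paper: rigidity via \Cref{rationality-properties-rigidity}, reduction to a complete discrete valuation ring via \Cref{reduce-to-dvr}, and then the specialization theorems of Kontsevich--Tschinkel, Nicaise--Shinder, and Colliot-Th\'el\`ene--Pirutka, all fed into \Cref{thm.main1}. The field-bookkeeping point you flag is handled in the paper by writing $R\cong\kappa[\![t]\!]$ via Cohen's structure theorem with $\kappa$ algebraically closed and using that $\overline{\kappa(\!(t)\!)}$ is the field of Puiseux series, so the property descends to some $\kappa(\!(t^{1/n})\!)$ and one replaces $t$ by $t^{1/n}$ --- exactly the limit/descent argument you sketch.
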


	\begin{proof}
	Let $\mc{P}$ be the property of geometric morphisms corresponding to $(*)$, as in the statement of \Cref{rationality-properties-rigidity}. By \Cref{rationality-properties-rigidity}, property $\mc{P}$ satisfies the rigidity condition. We now show that $\mc{P}$ satisfies the specialization condition with respect to $(S,f)$. By \Cref{reduce-to-dvr}, we may assume that $S=\on{Spec}(R)$, where $R$ is a complete discrete valuation ring containing $k$ and with residue field $\kappa$. Since $\kappa$ contains $k$, it is of characteristic zero, and hence by Cohen's structure theorem \cite[Lemma 0C0S]{stacks-project} we have a $\kappa$-algebra isomorphism $R\cong \kappa[\![t]\!]$. We may thus assume that $R=\kappa[\![t]\!]$. We may also suppose that $\kappa$ is algebraically closed. Since $\kappa$ is of characteristic zero, an algebraic closure of $\kappa(\!(t)\!)$ is given by the field of Puiseux series $\cl{\kappa(\!(t)\!)}=\cup_{n\geqslant 1}\kappa(\!(t^{1/n})\!)$.
        
       Suppose that $\mc{P}$ is property (i). The geometric fiber $X_{\cl{\kappa(\!(t)\!)}}$ is rational and hence the $\kappa(\!(t^{1/n})\!)$-variety $X_{\kappa(\!(t^{1/n})\!)}$ is rational for some $n\geqslant 1$. Replacing $t$ by $t^{1/n}$, we may assume that $n=1$, that is, the $\kappa(\!(t)\!)$-variety $X_{\kappa(\!(t)\!)}$ is rational. By Kontsevich--Tschinkel \cite{kontsevich2019specialization}, this implies that $X_{\kappa}$ is $\kappa$-rational, that is, $s_0$ satisfies $\mc{P}$. Thus property (i) satisfies the specialization condition.

       The proof of the specialization condition for property (ii) is entirely analogous, replacing the result of Kontsevich--Tschinkel by that of Nicaise--Shinder  \cite[Theorem 4.1.2]{nicaise2019motivic}.

       Finally, suppose that $\mc{P}$ is property (iii). The geometric fiber $X_{\cl{\kappa(\!(t)\!)}}$ is universally $CH_0$-trivial, and hence, as a special case of Colliot-Th\'el\`ene--Pirutka \cite[Theorem 1.14]{colliot2016hypersurfaces} (see \Cref{ch0-trivial-specialization} below), the $k$-variety $X_\kappa$ is universally $CH_0$-trivial, that is, $s_0$ satisfies $\mc{P}$. Thus property (iii) satisfies the specialization condition.        
        
       In all three cases (i)-(iii), we have proved that $\mc{P}$ satisfies the rigidity and specialization conditions. The conclusion follows from \Cref{thm.main1}.
	\end{proof}

\subsection{Degenerations to mildly singular varieties}

	Finally, we consider the specialization method of Voisin, later generalized by Colliot-Th\'el\`ene--Pirutka. The terminology \enquote{specialization method} is standard, and refers to the technique of proving stable irrationality of a very general member of a class of smooth projective varieties by degenerating it to a suitable mildly singular variety (in the original setup of Voisin, to varieties with at most quadratic singularities). It should not be confused with the term \enquote{specialization condition} used in this paper. 
    
 The specialization condition that is relevant to us is provided by the following result of Colliot-Th\'el\`ene and Pirutka.
 
 \begin{defin} \label{def:CH0-relative}
     Let $k$ be a field, and let $p\colon V\to W$ be a proper morphism of $k$-varieties. We say that $p$ is \emph{universally $CH_0$-trivial} if, for every field extension $F/k$, the pushforward map $p_*\colon CH_0(V_F)\to CH_0(W_F)$ is an isomorphism.
 \end{defin}

Note that~\Cref{def:CH0-relative} is a relative version of~\Cref{rationality-definitions}(iii): a proper $k$-variety $V$ is universally $CH_0$-trivial 
if and only if the structure morphism $V \to \Spec(k)$ is universally $CH_0$-trivial.

 \begin{thm}[Colliot-Th\'el\`ene--Pirutka]\label{ch0-trivial-specialization}
		Let $R$ be a discrete valuation ring with fraction field $K$ and algebraically closed residue field $k$, and let $\overline{K}$ be an algebraic closure of $K$. Let $f\colon X\to \on{Spec}(R)$ be a faithfully flat, proper morphism with geometrically integral fibers. Assume that the special fiber $Y \coloneqq X \times_R k$ admits a desingularization $\nu\colon \widetilde{Y} \to Y$ such that the morphism $\nu$ is universally $CH_0$-trivial, and that the geometric generic fiber $X_{\overline{K}} := X \times_A \overline{K}$ admits a desingularization $\widetilde{X} \to X_{\overline{K}}$. If $\widetilde{X}$ is universally $CH_0$-trivial, then so is $\widetilde{Y}$.
	\end{thm}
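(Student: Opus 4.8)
The plan is to produce a Chow decomposition of the diagonal on the smooth variety $\widetilde Y$ — whence, by \Cref{rmk-decomposition-diagonal}, its universal $CH_0$-triviality — by transporting one from a suitable $R$-model of $\widetilde X$. \emph{Step 1 (reduction to the generic fibre over $K$).} Writing $\overline K$ as a filtered union of finite extensions of $K$, the desingularization $\widetilde X\to X_{\overline K}$, together with the properness, smoothness and birationality of its data, descends by spreading out to some finite extension $K'/K$; since Chow groups commute with filtered colimits of fields, after enlarging $K'$ the Chow decomposition of the diagonal of $\widetilde X$ — which exists by \Cref{rmk-decomposition-diagonal} — descends as well. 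By Krull--Akizuki the integral closure of $R$ in $K'$ is a Dedekind domain; localizing it at a maximal ideal above $\mathfrak m_R$ produces a discrete valuation ring $R'$ whose residue field, being finite over the algebraically closed field $k$, is again $k$. Replacing $R$ by $R'$, we may assume that $\widetilde X\to X_{\overline K}$ is the base change of a desingularization $\mu\colon\widetilde X_K\to X_K$, with $\widetilde X_K$ smooth, proper and geometrically integral over $K$ and carrying a Chow decomposition of the diagonal.

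\emph{Step 2 (an $R$-model of $\widetilde X_K$ mapping to $X$).} Choose a closed immersion $\widetilde X_K\hookrightarrow\P^N_K$ and let $\mathcal X'$ be the scheme-theoretic closure, inside $X\times_R\P^N_R$, of the graph of $(\mu,\iota)\colon\widetilde X_K\to X_K\times_K\P^N_K$. Then $\mathcal X'$ is integral, proper and flat over $R$, its generic fibre is $\widetilde X_K$ (in particular smooth), and the first projection is a proper birational $R$-morphism $\rho\colon\mathcal X'\to X$ restricting to $\mu$ over $K$. Since $Y$ is a Cartier divisor on $X$, the generic point $\eta_Y$ of $Y$ has codimension $1$ in $X$ and $\mathcal O_{X,\eta_Y}$ is a one-dimensional local ring whose maximal ideal is generated by a uniformizer of $R$, hence a discrete valuation ring; thus $X$ is regular, and in particular normal, in a neighbourhood of $\eta_Y$. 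A dimension count shows that the image in $X$ of the locus where $\rho$ has positive-dimensional fibres has dimension at most $\dim\mathcal X'-2=\dim Y-1$, so it cannot contain $\eta_Y$; therefore $\rho$ is finite, and then (normality) an isomorphism, over an open neighbourhood $V$ of $\eta_Y$. Consequently $\rho_k\colon\mathcal X'_k\to Y$ is an isomorphism over the dense open $V\cap Y$, so $\mathcal X'_k$ has a unique irreducible component $Z_0$ dominating $Y$; the induced morphism $\rho_k|_{Z_0}\colon Z_0\to Y$ is birational, so $k(Z_0)=k(Y)=k(\widetilde Y)$, while $\mathcal X'_k$ is reduced at the generic point $\eta_{Z_0}$ of $Z_0$ and every other component of $\mathcal X'_k$ is carried by $\rho_k$ into the proper closed subset $Y\setminus V$.

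\emph{Step 3 (specialize, then transfer along $\nu$).} Apply the specialization homomorphism on Chow groups (in the sense of Fulton) to the $R$-flat scheme $\mathcal X'\times_R\mathcal X'$, obtaining $\sigma\colon CH_d(\widetilde X_K\times_K\widetilde X_K)\to CH_d(\mathcal X'_k\times_k\mathcal X'_k)$ with $d=\dim X_K$; it is compatible with flat pullback, proper pushforward and exterior products. Since $\Delta_{\mathcal X'}$ is $R$-flat with generic fibre $\Delta_{\widetilde X_K}$ and special fibre $\Delta_{\mathcal X'_k}$, and since the closure $\overline D\subset\mathcal X'$ of a divisor $D\subsetneq\widetilde X_K$ is $R$-flat with $\dim\overline D_k=d-1$, the Chow decomposition of the diagonal of $\widetilde X_K$ specializes to $[\Delta_{\mathcal X'_k}]=\Gamma+\mathcal X'_k\times\sigma(\alpha)$, where $\deg\sigma(\alpha)=1$ and $\Gamma$ is supported on $\overline D_k\times_k\mathcal X'_k$. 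Pulling back along the inclusion of the generic point $\eta_{Z_0}$ into the first factor kills $\Gamma$ (because $\eta_{Z_0}\notin\overline D_k$, for dimension reasons) and, $\mathcal X'_k$ being reduced at $\eta_{Z_0}$, sends the diagonal term to the tautological point $\delta_{Z_0}\in CH_0((\mathcal X'_k)_{k(Z_0)})$; pushing forward along $\rho_k$, which sends $\eta_{Z_0}$ to $\eta_Y$ with trivial residue extension and the remaining components of $\mathcal X'_k$ into $Y\setminus V$, gives $\delta_Y=(\gamma_0)_{k(Y)}$ in $CH_0(Y_{k(Y)})$ with $\gamma_0:=(\rho_k)_*\sigma(\alpha)\in CH_0(Y)$ of degree $1$. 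Now $\nu$ is birational and universally $CH_0$-trivial, so $\nu_*\colon CH_0((\widetilde Y)_F)\to CH_0(Y_F)$ is an isomorphism for every field $F\supseteq k$ and $\nu_*(\delta_{\widetilde Y})=\delta_Y$; setting $\alpha_{\widetilde Y}:=\nu_*^{-1}(\gamma_0)\in CH_0(\widetilde Y)$, a cycle of degree $1$, and applying $\nu_*^{-1}$ over $k(Y)=k(\widetilde Y)$ yields $\delta_{\widetilde Y}=(\alpha_{\widetilde Y})_{k(\widetilde Y)}$ in $CH_0((\widetilde Y)_{k(\widetilde Y)})$. By the localization sequence this equality holds already over a dense open of $\widetilde Y$, which is precisely a Chow decomposition of the diagonal of $\widetilde Y$; hence $\widetilde Y$ is universally $CH_0$-trivial by \Cref{rmk-decomposition-diagonal}.

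\emph{Expected main obstacle.} The delicate part is Step 3: the special fibre $\mathcal X'_k$ of the model is genuinely singular and reducible, so one must verify carefully that $\sigma$ behaves as stated on the diagonal, on $\overline D_k\times_k\mathcal X'_k$ and on $\mathcal X'_k\times\sigma(\alpha)$, and that upon restriction to $\eta_{Z_0}$ neither the extraneous components of $\mathcal X'_k$ nor $\overline D_k$ contribute, while $Z_0$ enters with multiplicity one. This is essentially the argument of \cite[Theorem~1.14]{colliot2016hypersurfaces}; the role of Steps 1--2 is to replace the possibly singular $X_K$ by a flat $R$-model whose generic fibre is the \emph{given} desingularization $\widetilde X$ and whose special fibre still sees the \emph{given} desingularization $\widetilde Y$ of $Y$, through the birational morphism $\rho_k|_{Z_0}\colon Z_0\to Y$.
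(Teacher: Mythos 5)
The paper's own proof consists solely of the citation ``See \cite[Theorem 1.14]{colliot2016hypersurfaces},'' so there is no in-text argument to compare with; your write-up is a detailed and correct reconstruction of the argument from that source. Your three-step strategy (descend to a finite extension of $K$; build a flat $R$-model $\mathcal X'$ by closing the graph of the desingularization $\mu$ inside $X\times_R\P^N_R$ and isolate the unique reduced component $Z_0$ of $\mathcal X'_k$ birational to $Y$; specialize the decomposition of the diagonal, restrict at $\eta_{Z_0}$, push forward along $\rho_k$ and transport across the universally $CH_0$-trivial $\nu$) matches Colliot-Th\'el\`ene--Pirutka's, and the subtle points you flag — the multiplicity of $Z_0$, the dimension count for $\overline D_k$, and the application of Zariski's Main Theorem over the regular local ring $\mathcal O_{X,\eta_Y}$ — are all handled correctly.
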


	\begin{proof}
		See \cite[Theorem 1.14]{colliot2016hypersurfaces}.
	\end{proof}
	
	\begin{thm}[Voisin, Colliot-Th\'el\`ene--Pirutka]\label{voisin-specialization}
		Let $k$ be an uncountable algebraically closed field, let $S$ be an integral $k$-scheme of finite type, let $\eta\in S$ be the generic point of $S$, and let $f\colon X\to S$ be a flat proper morphism with geometrically integral fibers. Suppose that the $k(\eta)$-variety is smooth, and that there exist a geometric point $s_0\colon \on{Spec}(k_0)\to S$ and a desingularization $\nu_0\colon Z\to X_{s_0}$ such that $\nu_0$ is universally $CH_0$-trivial and $Z$ is not universally $CH_0$-trivial. Then the set of $s\in S(k)$ such that $X_s$ is smooth projective and not universally $CH_0$-trivial is very general.
	\end{thm}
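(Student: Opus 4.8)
The strategy is to apply \Cref{thm.main2} to $f\colon X\to S$, regarded as a morphism of $S$-schemes, with $\mathcal P$ the property ``is universally $CH_0$-trivial'' of \Cref{rationality-definitions}(iii); by \Cref{rationality-properties-rigidity} this $\mathcal P$ satisfies the rigidity condition. To invoke \Cref{thm.main2} we must supply three things: a $k$-point $s_0'\in S(k)$ with $f_{s_0'}$ not satisfying $\mathcal P$; a dense open $U\subset S$ on which $\mathcal P$ satisfies the specialization condition; and a discrete valuation ring $R$ together with a morphism $\on{Spec}(R)\to S$ carrying the closed point to $s_0'$ and the generic point into $U$, along which $\mathcal P$ satisfies the specialization condition. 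I expect the production of $s_0'$ from the given (possibly non-closed) geometric point $s_0$ to be the main obstacle.

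\emph{Reduction to a $k$-point.} Let $P_0\in S$ be the image of $s_0$ and $T\coloneqq\cl{\{P_0\}}$, an integral closed subscheme of $S$ with function field $k(P_0)$. Spreading out $Z$, the morphism $\nu_0$, and the universal $CH_0$-triviality of $\nu_0$ over an open subset of $T$ — for the last one uses the fibral criterion of Colliot-Th\'el\`ene--Pirutka, that a proper morphism all of whose scheme-theoretic fibers are universally $CH_0$-trivial is itself universally $CH_0$-trivial — and, if necessary, passing from $T$ to an integral $k$-variety $W$ dominating it, one obtains a smooth proper morphism $g\colon\mathcal Z\to W$ with geometrically integral fibers and a $W$-morphism $\mathcal Z\to X\times_S W$ whose restriction over each $w\in W$ is a universally $CH_0$-trivial desingularization of the fiber of $f$ over the image of $w$, and whose generic fiber is not universally $CH_0$-trivial (because $Z$ is not, and universal $CH_0$-triviality is unaffected by extension of the algebraically closed base field). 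Applying \Cref{countable-union-rationality-properties} to $g$, the set of $w\in W$ with $\mathcal Z_w$ universally $CH_0$-trivial is a countable union of closed subschemes of $W$, none of which contains the generic point of $W$, hence all of which are proper. As $k$ is uncountable, there is a $k$-point $w_0\in W(k)$ avoiding all of them. Let $s_0'\in S$ be its image; then $s_0'\in S(k)$ (as $k$ is algebraically closed), $s_0'$ lies in $T$ — hence is a specialization of the generic point $\eta$ of $S$ — and $\nu_0'\colon\mathcal Z_{w_0}\to X_{s_0'}$ is a universally $CH_0$-trivial desingularization of $X_{s_0'}$ with $\mathcal Z_{w_0}$ not universally $CH_0$-trivial. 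Since pushforward on $CH_0$ along $\nu_0'$ is an isomorphism compatible with the degree maps, $X_{s_0'}$ is not universally $CH_0$-trivial, i.e.\ $f_{s_0'}$ does not satisfy $\mathcal P$.

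\emph{The open set and the discrete valuation ring.} Let $U\subset S$ be the open locus over which $f$ is smooth; it contains $\eta$ (as $X_\eta$ is smooth), so it is dense, and $f_U$ is smooth and proper with geometrically integral fibers. Then $\mathcal P$ satisfies the specialization condition with respect to $(U,f_U)$: by \Cref{reduce-to-dvr} and the usual reductions one is reduced to a complete discrete valuation ring with algebraically closed residue field, where \Cref{ch0-trivial-specialization} applies with the identity as a desingularization on both sides (the fibers being smooth). For the second datum, since $s_0'$ is a specialization of $\eta\in U$, \cite[Proposition 7.1.9]{ega2} provides a discrete valuation ring $R$ and a morphism $\on{Spec}(R)\to S$ carrying the closed point to $s_0'$ and the generic point to $\eta$. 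That $\mathcal P$ satisfies the specialization condition with respect to $(\on{Spec}(R),f_R)$ is checked by the same reduction: after completing $R$ and enlarging its residue field to an algebraically closed field $k'$, the geometric generic fiber of $f_R$ is the smooth variety $X_{\cl\eta}$, and if it is universally $CH_0$-trivial then \Cref{ch0-trivial-specialization}, applied with $\widetilde X=X_{\cl\eta}$ and $\widetilde Y=\mathcal Z_{w_0}\times_k k'$ (desingularization morphism $\nu_0'\times_k k'$), shows that $\widetilde Y$ is universally $CH_0$-trivial; hence so is the geometric special fiber $X_{s_0'}\times_k k'$, and hence, by rigidity, so is $X_{s_0'}$.

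\emph{Conclusion.} By \Cref{thm.main2}, the set $\Omega$ of $s\in S(k)$ for which $X_s$ is not universally $CH_0$-trivial is very general. Finally, the locus of $s\in S$ with $X_s$ singular is contained in the proper closed subscheme $S\setminus U$; adjoining it to a countable family of proper closed subschemes witnessing that $\Omega$ is very general shows that the set of $s\in S(k)$ with $X_s$ smooth projective and not universally $CH_0$-trivial is again very general. The crux of the argument, as flagged above, is the reduction step: spreading out the relative desingularization — and, crucially, its universal $CH_0$-triviality — over a variety dominating the closure of $s_0$, so as to reach a $k$-rational point.
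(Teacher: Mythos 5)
Your key step — manufacturing a $k$-point $s_0'$ from the geometric point $s_0$ — contains a genuine gap at the point where you spread out ``the universal $CH_0$-triviality of $\nu_0$''. The fibral criterion of Colliot-Th\'el\`ene--Pirutka that you invoke runs the wrong way: it says that a proper morphism all of whose scheme-theoretic fibers are universally $CH_0$-trivial is itself universally $CH_0$-trivial, but it provides no converse. Universal $CH_0$-triviality of the single morphism $\nu_0$ is a condition on pushforward of $CH_0$ over all field extensions of its base; without first extracting finite-type data from it (for instance an explicit description of the exceptional fibers as points and rational curves, which the hypotheses of \Cref{voisin-specialization} do not supply), there is no spreading-out principle that makes the nearby morphisms $\nu_w\colon\mathcal Z_w\to X_{s(w)}$ universally $CH_0$-trivial on a dense open subset of $W$. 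Since that is exactly what you need — both to deduce from ``$\mathcal Z_{w_0}$ not universally $CH_0$-trivial'' that ``$X_{s_0'}$ not universally $CH_0$-trivial'' via $(\nu_0')_*$, and again when you apply \Cref{ch0-trivial-specialization} over $\Spec(R)$ with $\widetilde Y = \mathcal Z_{w_0}\times_k k'$ — the gap propagates through the rest of the argument.

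The reduction to a $k$-point is in fact unnecessary, and dropping it is what removes the gap. Although the statement of \Cref{thm.main2} reads ``$s_0\in S(k)$'', its proof only uses $s_0$ through the specialization condition for $(\Spec(R),f_R)$: it considers the arc $\Spec(R)\to S$ whose closed point maps to the image of $s_0$, and nowhere exploits that this image be a closed point with residue field $k$. The paper applies \Cref{thm.main2} directly to the geometric point, exactly in this spirit. The paper's other departure from your argument is the choice of property: instead of ``is universally $CH_0$-trivial'' (whose rigidity, via \Cref{rationality-properties-rigidity}, is in any case only established for \emph{smooth} proper $V$), the paper takes $\mc P$ to be ``$V$ admits a desingularization $\widetilde V\to V$ which is universally $CH_0$-trivial and with $\widetilde V$ universally $CH_0$-trivial''. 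Folding the desingularization into $\mc P$ means the hypothesis on $s_0$ becomes literally ``$f_{s_0}$ does not satisfy $\mc P$'' — no spreading-out required — and the specialization condition is then supplied verbatim by \Cref{ch0-trivial-specialization}. Your choice of the simpler $\mc P$ is precisely what pushes you into the reduction step, and into the unfillable spreading-out claim.
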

	
	\begin{proof}
    Let $\mc{P}$ be the following property of geometric morphisms over $k_0$: for every algebraically closed field $k'$ containing $k_0$ and every $k'$-morphism $V\to W$, the pair $(k', V\to W)$ satisfies $\mc{P}$ if and only if $W=\on{Spec}(k') $ and $V$ admits a desingularization $\nu\colon \tilde{V}\to V$ such that $\nu$ is universally $CH_0$-trivial and $\tilde{V}$ is universally $CH_0$-trivial. A variant of the proof of \Cref{rationality-properties-rigidity} shows that $\mc{P}$ satisfies the rigidity condition; we leave this as an exercise for the reader. By assumption, the pair $(k(s_0),f_{s_0})$ does not satisfy $\mc{P}$. 
    
	By \cite[Proposition 7.1.9]{ega2}, there exist a discrete valuation ring $R$ and a morphism $\varphi\colon \on{Spec}(R)\to S$ sending the generic point $\on{Spec}(R)$ to $\eta$ and the closed point of $\on{Spec}(R)$ to the image of $s_0$. We let $f_R\colon X_R\to \on{Spec}(R)$ be the base change of $f$ along $\varphi$. Since the generic fiber of $f$ is smooth, there exists a dense open subscheme $U\subset S$ such that $f_U\colon X_U\to U$ is smooth. By \Cref{ch0-trivial-specialization}, property $\mc{P}$ satisfies the specialization condition with respect to $(U,f_U)$ and $(\on{Spec}(R),f_R)$. The conclusion now follows from \Cref{thm.main2}.
	\end{proof}
	
	\begin{rmk}
		Voisin \cite{voisin2015unirational} applied her specialization method to a suitable projective family $f\colon X\to S$ to prove that the desingularization of a very general quartic double solid with at most seven nodes does not admit a decomposition of the diagonal, and hence it is not retract rational. Colliot-Th\'el\`ene--Pirutka \cite{colliot2016hypersurfaces} applied \Cref{voisin-specialization} to prove that a very general quartic threefold does not admit a decomposition of the diagonal, and hence it is not retract rational. 
	\end{rmk}

\section{Geometric rational sections}
\label{sect.rat-sections}

\begin{defin}\label{defin:rational-section}
Let $k$ be a field, and let $f\colon X \to  Y$ be a morphism of $k$-schemes of finite type. We say that $f$ \emph{has a rational section} if there exist an open dense subscheme $U \subset Y$, and a section of $f$ over $U$, namely, a commutative diagram
  \[
\xymatrix{
& X \ar[d]^f \\
U \, \ar@{^{(}->}[r] \ar[ur] & Y.
}
\] 
We say that $f$ \emph{has a geometric rational section} if $f_{\overline{k}}\colon X_{\overline{k}} \to  Y_{\overline{k}}$ has a rational section, where $\overline{k}$ is the algebraic closure of $k$.
\end{defin}

\begin{rmk}
    \begin{enumerate}
        \item In the definition above, the open dense subscheme $U$ is only assumed to be topologically dense, not scheme-theoretically dense.
        \item If $Y$ is reduced and $\eta_{1}$, \dots,~$\eta_{r}$ are the generic points of its irreducible components, then $f$ has a rational section if and only if $\Spec\kappa(\eta_{i}) \stackrel{\; \eta_i \;}{\longrightarrow}  Y$ lifts to a morphism $\Spec k(\eta_{i}) \to   X$ for each $i = 1, \ldots, r$.
        \item If $f\colon X \to  Y$ as above has a rational section, it is clear that for any extension $k'/k$, the morphism $f_{k'}\colon X_{k'} \to  Y_{k'}$ has a rational section.
    \end{enumerate} 
\end{rmk}

\begin{lemma}\label{lem1}
Let $S$ be a noetherian scheme, $f\colon T \to  S$ be a morphism of finite type, and $T' \subset T$ be an open subset. Then the subset
   \[
   S' \coloneqq \{s \in S\mid T'_{s}\text{ is dense in }T_{s}\} \subset S
   \]
is constructible. Furthermore, if $T'$ is dense in $T$, then $S'$ is dense in $S$.
\end{lemma}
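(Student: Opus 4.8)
The plan is to isolate a single statement about integral schemes, prove it using the classical constructibility theorems for fibres of finite-type morphisms, and then deduce both assertions: the density statement directly, and the constructibility statement by Noetherian induction. Since all the conditions involved are purely topological, I would first replace $S$ and $T$ by their reductions. Writing $Z \coloneqq T \setminus T'$ for the complementary closed subset (with its reduced structure), a point $s \in S$ lies in $S'$ exactly when $Z_s$ contains no generic point of $T_s$; and since every generic point of $T_s$ is a generic point of $(T_i)_s$ for some irreducible component $T_i$ of $T$, this amounts to asking that $Z_s \cap (T_i)_s$ contain no generic point of $(T_i)_s$ for every $i$. This reduces everything to the claim $(\star)$: \emph{if $W$ is an integral scheme, $g \colon W \to S$ is a dominant finite-type morphism to a Noetherian integral scheme $S$, and $Y \subsetneq W$ is a proper closed subset, then the set of $s \in S$ such that $Y_s$ contains no generic point of $W_s$ contains a dense open subscheme of $S$.}

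To prove $(\star)$ I would pass to the generic point $\eta$ of $S$. Because $W$ is integral and $g$ is dominant, the generic fibre $W_\eta$ is integral of some finite dimension $d$, and $Y_\eta$ is a closed subscheme of $W_\eta$ missing its generic point (namely the generic point of $W$, which is not in $Y \subsetneq W$), so $\dim Y_\eta \leqslant d - 1$. The subsets $\{s \in S : W_s \text{ is irreducible}\}$, $\{s \in S : \dim Y_s < \dim W_s\}$ and $\{s \in S : W_s \neq \emptyset\}$ are then constructible --- respectively by constructibility of irreducibility of fibres \cite[IV$_3$, 9.7.7]{ega4}, by constructibility of fibre dimension \cite[IV$_3$, 9.5.3]{ega4}, and by Chevalley's theorem --- and each of them contains $\eta$. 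Since a constructible subset of an irreducible Noetherian scheme that contains the generic point contains a dense open subscheme, intersecting the three produces a dense open $V \subseteq S$ over which $W_s$ is nonempty and irreducible with $\dim Y_s < \dim W_s$; for such $s$ the unique generic point of $W_s$ cannot lie in the closed subset $Y_s$, since otherwise $Y_s$ would equal $W_s$ and the dimensions would agree. This proves $(\star)$.

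The ``Furthermore'' clause does not even need $(\star)$: if $T'$ is dense in $T$ it contains every generic point of $T$, and for each irreducible component $S_i$ of $S$ with generic point $\eta_i$ the generic points of $T_{\eta_i}$ are precisely the generic points of $T$ lying over $\eta_i$ (a point of $g^{-1}(\eta_i)$ is a generic point of that fibre iff it is a generic point of $T$, because $\eta_i$ has no proper generization in $S$), hence they lie in $T'_{\eta_i}$, so $T'_{\eta_i}$ is dense in $T_{\eta_i}$ and $\eta_i \in S'$; as $S'$ meets every component, it is dense. For constructibility of $S'$ I would use Noetherian induction on $S$: if $S$ is reducible its components are proper closed subschemes and I conclude by induction, so I may assume $S$ integral with generic point $\eta$. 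If $\eta \in S'$, then discarding on a dense open the components of $T$ that do not dominate $S$, and noting that each remaining component $T_i$ has $Z \cap T_i \subsetneq T_i$ (since $\eta \in S'$ forces the generic point of $T_i$ into $T'$), I apply $(\star)$ to $W = T_i$, $Y = Z \cap T_i$ to obtain a dense open $V \subseteq S$ with $V \subseteq S'$; then $S' = V \cup (S' \cap (S \setminus V))$ is constructible by induction. If $\eta \notin S'$, some generic point $\xi$ of $T_\eta$ lies in $Z$; it is a generic point of $T$, so $W \coloneqq \overline{\{\xi\}}$ is a component of $T$ contained in $Z$ and dominating $S$, and with $W'$ the union of the remaining components and $D \coloneqq W \cap W' \subsetneq W$, Chevalley gives a dense open $V \subseteq S$ inside the image of $W \setminus D$; for $s \in V$ one has $D_s \subsetneq W_s$, hence $(T \setminus W)_s$ is not dense in $T_s$, hence a component of $T_s$ lies inside $W_s \subseteq Z_s$ and $s \notin S'$, so $S' = S' \cap (S \setminus V)$ is constructible by induction.

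The step I expect to be the main obstacle is managing the fact that density of $T'_s$ in $T_s$ is \emph{not} a condition that can be tested one irreducible component of $T$ at a time --- an open dense subset of $T$ may miss an entire irreducible component of some fibre --- which is exactly what forces the reduction to $(\star)$ through the component decomposition of $T$ and the separate treatment of the cases $\eta \in S'$ and $\eta \notin S'$. Beyond that bookkeeping, the only substantial external inputs are the classical constructibility results of EGA\,IV, \S 9, for the dimension and the irreducibility of the fibres of a finite-type morphism; everything else is Chevalley's theorem and elementary topology of spectral spaces.
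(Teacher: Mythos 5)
Your reduction to the integral-model claim $(\star)$, the Noetherian induction, and the case analysis in the constructibility argument are all sound, and the density clause is handled correctly. But the proof of $(\star)$ itself has a genuine gap: you invoke constructibility of the set $\{s \in S : W_s \text{ irreducible}\}$, citing \cite[IV$_3$, 9.7.7]{ega4}. That theorem concerns \emph{geometric} irreducibility of fibres, and plain irreducibility is not a constructible condition. A concrete counterexample: $W = \Spec \Z[i] \to S = \Spec \Z$ is finite and dominant with $W$ integral, but $W_{(p)}$ is irreducible exactly when $p$ does not split in $\Z[i]$ (i.e.\ $p \not\equiv 1 \pmod 4$), and this set of primes is neither finite nor cofinite, hence not constructible in $\Spec\Z$. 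Moreover $W_\eta = \Spec\Q(i)$ is integral but not geometrically irreducible over $\Q$, so one cannot substitute the geometric version. Since your conclusion in $(\star)$ --- that the generic point of $W_s$ cannot lie in $Y_s$ --- relies on $W_s$ having a \emph{unique} generic point, the argument as written does not close. The condition you actually need, and which \emph{is} constructible and contains $\eta$, is that $W_s$ be equidimensional of dimension $e = \dim W_\eta$: this can be extracted from the upper semicontinuity of the local fibre dimension $w \mapsto \dim_w W_{g(w)}$ together with Chevalley, and it does suffice to conclude since $\dim Y_s < e$ then forces $Y_s$ to miss every component of $W_s$.

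It is worth contrasting this with the paper's proof, which is much shorter and sidesteps both the Noetherian induction and the reduction to integral models. The key observation there is that, with $V = T \setminus T'$, one has $s \notin S'$ if and only if there exists $t \in V_s$ with $\dim_t V_s = \dim_t T_s$. Stratifying $T$ (resp.\ $V$) by the locally closed loci $T^d$ (resp.\ $V^d$) on which the local fibre dimension equals $d$, one gets $S \setminus S' = \bigcup_d f(V^d \cap T^d)$, a finite union of images of locally closed sets, hence constructible by a single application of Chevalley; density is then immediate. In other words, working with the \emph{local} fibre dimension from the start is exactly what removes the need to control irreducibility of fibres, and this is also precisely the tool required to repair the gap in your proof of $(\star)$.
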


\begin{proof}
For each $d \geqslant 0$ we denote by $T^{d}$ the locus of points $t \in T$ such that $\dim_{t}T_{f(t)} = d$. By semicontinuity of the fiber dimension we have that $T^{d}$ is a locally closed subset of $T$; also, $T^{d} \neq \emptyset$ for finitely many values of $d$, because $S$ is noetherian.

Set $V \eqdef T \setminus T'$; if $s \in S$, $T'$ fails to be dense in the fiber $T_{s}$ if and only if there is a point $t \in V_{s}$ such that $\dim_{t}V_{s} = \dim_{t}T_{s}$; hence, the complement $S\setminus S'$ is the union of the images of the intersections $V^{d} \cap T^{d} \subset T$. From Chevalley's theorem it follows that $S\setminus S'$ is constructible, hence $S'$ is constructible.

If $T'$ is dense in $T$, then $S'$ contains the generic points of all the irreducible components of $S$, hence it is dense in $S$.
\end{proof}

\begin{lemma}\label{prop:properties-grs}
Let $k$ be a field, and let $f\colon X \to  Y$ be a morphism of $k$-schemes of finite type. The following are equivalent.
\begin{enumerate}

\item The morphism $f$ has a geometric rational section.

\item There exists a finite extension $k'/k$ such that $f_{k'}\colon X_{k'} \to  Y_{k'}$ has a rational section.

\item There exists some extension $k'/k$ such that $f_{k'}\colon X_{k'} \to  Y_{k'}$ has a rational section.

\end{enumerate}
\end{lemma}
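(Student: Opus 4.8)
The plan is to prove the cycle of implications $(1)\Rightarrow(3)\Rightarrow(2)\Rightarrow(1)$, which yields all the asserted equivalences. The implication $(1)\Rightarrow(3)$ is immediate, as $\overline{k}$ is an extension of $k$. For $(2)\Rightarrow(1)$, I would use that a finite extension $k'/k$ admits a $k$-embedding into $\overline{k}$, together with the fact that base change of schemes along a field extension is faithfully flat, hence sends generic points onto generic points and reflects them; consequently an open subscheme of $Y_{k'}$ is dense in $Y_{k'}$ if and only if its preimage in $Y_{\overline{k}}$ is dense in $Y_{\overline{k}}$, so a rational section of $f_{k'}$ base-changes to a rational section of $f_{\overline{k}}$.

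The substance of the lemma lies in $(3)\Rightarrow(2)$. First I would run a routine limit argument to reduce to the case where $k'$ is finitely generated over $k$: the dense open $U'\subset Y_{k'}$ over which a section of $f_{k'}$ is defined, and the section itself, are of finite presentation over $Y_{k'}=\varprojlim_\alpha Y_{k_\alpha}$, the limit taken over the finitely generated subextensions $k_\alpha/k$, hence they descend to some stage $Y_{k_\alpha}$; density of the descended open follows from the generic-point remark above. So I may assume $k'=k(W)$ for an integral affine $k$-variety $W$. Next I would spread out. By generic flatness I may shrink $W$ so that the projection $Y_W\coloneqq Y\times_k W\to W$ is flat; then every irreducible component of $Y_W$ dominates $W$, so the generic fibre $Y_{k'}$ of $Y_W\to W$ is dense in $Y_W$. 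Spreading out $U'$ and the section over it, and shrinking $W$ once more (which preserves flatness, hence the density just obtained), I get a dense open $\mathcal{U}\subset Y_W$ with generic fibre $U'$ --- dense in $Y_W$ because it contains $U'$, which is dense in $Y_{k'}$, which is dense in $Y_W$ --- together with a section $\sigma\colon\mathcal{U}\to X_W$ of $f_W$.

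Now \Cref{lem1}, applied with $S=W$, $T=Y_W$ and $T'=\mathcal{U}$, tells us that $S'\coloneqq\{w\in W\mid \mathcal{U}_w\text{ is dense in }(Y_W)_w\}$ is constructible and, since $\mathcal{U}$ is dense in $Y_W$, dense in $W$. A dense constructible subset of the irreducible scheme $W$ contains a non-empty open subscheme, so I may choose a closed point $w\in S'$; by the Nullstellensatz $k''\coloneqq k(w)$ is a finite extension of $k$. Base-changing $\mathcal{U}$ and $\sigma$ along $w\colon\Spec(k'')\hookrightarrow W$ produces an open subscheme $\mathcal{U}_w\subset (Y_W)_w=Y_{k''}$, dense because $w\in S'$, together with a section $\sigma_w\colon\mathcal{U}_w\to X_{k''}$ of $f_{k''}$. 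Hence $f_{k''}$ has a rational section with $k''/k$ finite, which is $(2)$.

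The step I expect to require the most care is the density bookkeeping: a priori, spreading out a rational section and then specializing only produces a section over some open subscheme whose fibre may fail to be dense in the relevant fibre of $Y$. The point of \Cref{lem1} is precisely to supply a dense constructible locus of specialization points over which density is preserved, and in order to apply \Cref{lem1} with a dense $T'$ one must first arrange that the generic fibre of $Y_W\to W$ is dense in $Y_W$ --- which is why the generic-flatness reduction is performed before spreading out.
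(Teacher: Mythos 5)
Your proof is correct and follows essentially the same route as the paper: the hard implication is handled by reducing to a finitely generated extension, spreading out over an integral affine $k$-variety, invoking \Cref{lem1} to produce a dense constructible locus where fibrewise density holds, and specializing to a closed point. The only differences are organizational --- you close the cycle as $(1)\Rightarrow(3)\Rightarrow(2)\Rightarrow(1)$ and specialize to a closed point with finite residue field (landing in (2)) rather than first reducing to $k=\overline{k}$ and specializing to a $k$-rational point (landing in (1)) --- and you are more explicit than the paper about why the spread-out open is dense in $Y_W$, which is a point worth making.
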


\begin{proof}
(1) $\Longrightarrow$ (2). It suffices to note that a rational section of $f_{\cl{k}}$ is defined over a finite intermediate field $k \subset k' \subset \overline{k}$.

(2) $\Longrightarrow$ (3). Obvious. 

(3) $\Longrightarrow$ (1). For this we can extend $k'$, and assume $\overline{k} \subset k'$; we can also assume $k = \overline{k}$. The rational section $Y_{k'} \to  X_{k'}$ is defined over a finitely generated extension of $k$; hence we can assume that $k'$ is finitely generated over $k$. Suppose that $U \to  X_{k'}$ is a rational section of $f_{k'}$; there exists a finitely generated $k$-subalgebra $R\subset k'$, an open subscheme $V$ of $Y_{R}$ such that $V_{k'}$, and a section $V \to  X_{R}$ of $f_R$. Since $V$ is dense in $Y_{R}$, by \Cref{lem1} applied to $V \subset Y_{R}$ we can localize $R$ and assume that each fiber of $V \to  \Spec(R)$ is dense in the corresponding fiber of $Y_{R} \to  \Spec(R)$. Since $k$ is algebraically closed, we can restrict the rational section $V \to  X_{R}$ to a $k$-rational point in $s \in \Spec(R)$ and conclude that there is a rational section $V_{s} \to  X$ of $f$, as desired. 
\end{proof}

We are now ready for the main theorem of this section.

\begin{thm} \label{thm.rat-sections} Let $k$ be a field, let $S$ be a $k$-scheme of finite type, let $Y\to S$ be a flat morphism of finite type, and let $f\colon X\to Y$ be a proper morphism. Then there exist a countable collection of closed subschemes $\{S_j\}_{j\geqslant 1}$ of $S$ with the following property: for every geometric point $s$ of $S$,
the morphism $f_{s}$ has a rational section if and only if $s$ lies in $S_j$ for some $j \geqslant 1$.
\end{thm}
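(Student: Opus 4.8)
The plan is to apply \Cref{thm.main1} to the property $\mc{P}$ of geometric morphisms over the prime field given by: a pair $(k',\varphi\colon V\to W)$ satisfies $\mc{P}$ if and only if $\varphi$ admits a rational section (equivalently, since $k'$ is algebraically closed, a geometric rational section). We first check that $\mc{P}$ satisfies the rigidity condition. If $k''/k'$ is an extension of algebraically closed fields and $\varphi\colon V\to W$ is a morphism of $k'$-schemes of finite type, then a rational section of $\varphi$ base changes to one of $\varphi_{k''}$; conversely, if $\varphi_{k''}$ has a rational section then by $(3)\Rightarrow(1)$ of \Cref{prop:properties-grs} the morphism $\varphi$ has a geometric rational section, which, $k'$ being algebraically closed, is a rational section. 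Thus $\mc{P}$ is rigid, and by \Cref{thm.main1} it remains to verify the specialization condition with respect to $(S,f)$.

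By \Cref{reduce-to-dvr} we may assume $S=\Spec(R)$ with $R$ a complete discrete valuation ring; write $K$ and $\kappa$ for its fraction field and residue field. We must show: if $f_{\overline{K}}$ has a rational section then so does $f_{\overline{\kappa}}$. A section of $f_{\overline{K}}$ is defined over a finite subextension $K'/K$ of $\overline{K}$; replacing $R$ by its integral closure in $K'$ (again a complete discrete valuation ring, finite over $R$, with residue field a finite extension of $\kappa$) and using $(2)\Rightarrow(1)$ of \Cref{prop:properties-grs} applied to $f_\kappa$, we reduce to the following: given a dense open $U\subseteq Y_K$ and a section $\sigma\colon U\to X_K$ of $f_K$, produce a rational section of $f_\kappa$. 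Note that $U$ is open in $Y_R$, since $Y_K=Y_R\setminus Y_\kappa$ is.

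Let $\overline{\Gamma}\subseteq X_R$ be the scheme-theoretic image of $U\xrightarrow{\sigma}X_K\hookrightarrow X_R$; as $f$ is separated, $\sigma$ is an immersion with $\sigma(U)$ closed in $f_K^{-1}(U)$. Then $\overline{\Gamma}$ is flat over $R$, the restriction $g\colon\overline{\Gamma}\to Y_R$ of $f_R$ is proper ($\overline{\Gamma}$ being closed in $X_R$ with $X_R\to Y_R$ proper), one has $g^{-1}(U)=\sigma(U)$, and $g$ identifies $\sigma(U)$ with the open subscheme $U$ of $Y_R$. Hence $\sigma(U)$, which is dense in $\overline{\Gamma}$ (it is the generic fibre of the $R$-flat scheme $\overline{\Gamma}$), lies in the dense open locus $\overline{\Gamma}^{\circ}\subseteq\overline{\Gamma}$ of points at which $g$ is, locally on the source, an open immersion. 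Applying \Cref{lem1} to $g\colon\overline{\Gamma}\to Y_R$ with $T'=\overline{\Gamma}^{\circ}$, the set $Y_R^{\circ}=\{y\in Y_R:\overline{\Gamma}^{\circ}_y\text{ is dense in }\overline{\Gamma}_y\}$ is dense and constructible in $Y_R$, and for $y\in Y_R^{\circ}$ the points of $\overline{\Gamma}^{\circ}$ over $y$ are $\kappa(y)$-rational points of $X_y$, dense in $\overline{\Gamma}_y$; in particular $X_y(\kappa(y))\neq\emptyset$. The remaining, and hardest, step is to descend to $Y_\kappa$: using the $R$-flatness of $\overline{\Gamma}$ and the properness of $g$ one argues that $Y_R^{\circ}$ is still dense in the special fibre $Y_\kappa$, so that it contains the generic point of every irreducible component of $Y_\kappa$; spreading out the corresponding $\kappa(\xi)$-points and gluing them componentwise as in the remark after \Cref{defin:rational-section} produces a rational section of $f_\kappa$, hence of $f_{\overline{\kappa}}$. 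This establishes the specialization condition, and \Cref{thm.main1} gives the desired family $\{S_j\}_{j\geqslant 1}$. The main obstacle is precisely this final descent to $Y_\kappa$, i.e.\ understanding the way the rational section of $f_K$ degenerates along $R$ and controlling the good locus $Y_R^{\circ}$ over the special fibre.
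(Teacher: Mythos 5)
Your reduction steps match the paper's: rigidity via \Cref{prop:properties-grs}, reduction to a complete discrete valuation ring via \Cref{reduce-to-dvr}, and passage to a finite extension $K'/K$ and a dominating valuation ring so that $f_K$ itself has a rational section. The problem is the specialization step, which is the entire content of the theorem and which you do not actually prove. Your appeal to \Cref{lem1} shows that $Y_R^{\circ}$ is constructible and dense in $Y_R$; but ``dense'' in the conclusion of \Cref{lem1} means only that $Y_R^{\circ}$ contains the generic points of the irreducible components of $Y_R$, and by flatness of $Y_R\to\Spec(R)$ all of these lie in the generic fibre $Y_K$, where you already know everything. It gives no information about $Y_\kappa$. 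The sentence ``using the $R$-flatness of $\overline{\Gamma}$ and the properness of $g$ one argues that $Y_R^{\circ}$ is still dense in the special fibre'' is precisely the assertion to be proved, and you flag it yourself as the main obstacle. Moreover, $R$-flatness of $\overline{\Gamma}$ (automatic for the closure of a subscheme of the generic fibre) is not the flatness that matters, and a priori the fibre $\overline{\Gamma}_\eta$ over the generic point $\eta$ of a component of $Y_\kappa$ could be a nontrivial finite $\kappa(\eta)$-scheme (a residue field extension, several points, or a non-reduced point), in which case $\eta\notin Y_R^{\circ}$.

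The missing input --- and the place where the flatness of $Y\to S$ is genuinely used (cf.\ the remark following \Cref{thm.rat-sections} showing it cannot be dropped) --- is the local argument the paper gives. Let $\eta$ be the generic point of an irreducible component of $Y_\kappa$ and $A=\mathcal{O}_{Y_R,\eta}$. Flatness of $Y_R\to\Spec(R)$ forces the uniformizer $t$ of $R$ to be a non-zero-divisor on $A$ with $\mathfrak{m}_A=tA$ (using also that $Y_\kappa$ is reduced at $\eta$), so $A$ is a discrete valuation ring with fraction field $L$, and $\Spec(L)\to Y$ hits a generic point of $Y_K$, hence lies in the domain $U$ of the rational section. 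The valuative criterion of properness for $X\to Y$ then extends the resulting $L$-point of $X$ to an $A$-point, whose closed fibre is a lift of $\Spec\kappa(\eta)\to Y$ to $X$; spreading these lifts out over $Y_\kappa$ gives the rational section. (In your language: $\overline{\Gamma}\times_{Y_R}\Spec(A)$ is affine and proper, hence finite over $A$, integral with fraction field $L$, hence equal to $\Spec(A)$ by normality of $A$ --- which is how one would actually verify $\eta\in Y_R^{\circ}$.) Without some form of this argument your proof is incomplete.
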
 

\begin{proof} Let $\mathcal{P}$ be the property of geometric morphisms of having a rational section. In view of \Cref{thm.main1}, it suffices to show that $\mathcal{P}$ satisfies the rigidity condition, as well as the specialization condition with respect to $(S,f)$.
Rigidity is an immediate consequence of \Cref{prop:properties-grs}; we will focus on verifying the specialization condition for the remainder of the proof.

By \Cref{reduce-to-dvr} we can assume $S = \Spec(R)$. 
Denote the the fraction field of $R$ by $K$
and its residue field of $R$ by $\kappa$. By our assumption
$f$ has a section over $\Spec \cl{K}$. Then there exists a finite extension $K'$ of $K$ such that $f_{K'}\colon X_{K'} \to  Y_{K'}$ has a rational section. Choose a discrete valuation ring $R' \subset K'$ dominating $R\subset K$; then again by \Cref{prop:properties-grs} we may replace $R$ with $R'$, and so assume that $f_{K}\colon X_{K} \to  Y_{K}$ has a rational section. We will show that $f_{\kappa}\colon X_{\kappa} \to  Y_{\kappa}$ has a rational section. By hypothesis, $Y_{\kappa}$ is reduced.

If $Y_{\kappa}=\emptyset$, then $X_{\kappa}=\emptyset$ and $f_{\kappa}\colon X_{\kappa}\to Y_{\kappa}$ is the identity map, which has a section and hence a rational section. We may thus assume that $Y_{\kappa}\neq\emptyset$. Let $\eta\in Y_{\kappa} \subset Y$ be the generic point of an irreducible component, set $A \coloneqq \mathcal{O}_{Y,\eta}$, and let $L$ be the fraction field of $A$. We need to show that $\Spec \kappa(\eta) \to  Y$ lifts to $X$. Let $t \in R$ be a uniformizing parameter: since $Y \to  \Spec(R)$ is flat we have that $\mathfrak{m}_{A} = tA$, and $t$ is not a zero-divisor on $A$. Hence $A$ is a discrete valuation ring, and the morphism $\Spec A \to  X$ sends the generic point $\Spec L$ of $\Spec A$ into the generic point of a component of $X_{K}$. Hence $\Spec L \to  Y_{K} \subset Y$ lifts to $\Spec L \to  X_{K} \subset X$. The desired conclusion now follows from the valuative criterion for properness applied to $X \to  Y$.
\end{proof}

\begin{rmk}
    The assumption that $Y$ is flat over $S$ in \Cref{thm.rat-sections} may not be removed. Indeed, let $S=\P^1_k$, let $Y=X$ be the $S$-scheme given by the disjoint union of $S$ and $T=\P^1_k$, where $T\to S$ is the constant map sending $T$ to the $k$-point $0 \in \mathbb P^1 = S$. Now let $f\colon X\to Y$ be the projective morphism given by identity on $S$ and by the map $(u:v)\mapsto (u^2:v^2)$ on $T$. Observe that $Y$ is not flat over $S$. For every geometric point $s$ of $S$, the fiber $f_s$ has a geometric section if and only if the image of $s$ is not equal to $0\in S$. Thus the conclusion of \Cref{thm.rat-sections} fails in this example.
\end{rmk}

\section{Examples}
\label{sect.examples}

\subsection{Variation of Witt indices in a family of quadrics}

Recall that by the Witt Decomposition Theorem, every non-degenerate quadratic form $q$ over a field $K$ of characteristic different from $2$ can be written, in a unique way, as a direct sum $q \simeq q_{\rm an} \oplus \langle -1, 1 \rangle \oplus \ldots \oplus \langle -1, 1 \rangle$, where
$q_{\rm an}$ is an anisotropic form over $K$ and $\langle -1, 1 \rangle$ is the hyperbolic plane. The number of hyperbolic planes in this decomposition is called the Witt index of $q$. We will denote it by $w_0(q)$.

Let $Q_0$ be the projective quadric cut out by $q$. Let $K_1 = K(Q)$ be the function field of $Q$. The first Witt index $w_1(q)$ is defined as $w_0(q_{K_1})$, where $q_{K_1}$ is obtained by base-changing $q$ to $K_1$. Proceeding recursively, we define $K_0 = K$ and, for every integer $m\geqslant 1$, $K_m = K_{m-1}(Q)$ to be the function field of $Q$ over $K_{m-1}$, and $w_m(q) = w_0(q_{K_m})$.
Note that the sequence $w_0(q), w_1(q), w_2(q), \ldots$ is strictly increasing until it reaches its maximal possible value $\lfloor \dim(q)/2 \rfloor$, and it stabilizes at $\lfloor \dim(q)/2 \rfloor$ after that. For this reason we will always assume that $m \leqslant \lfloor \dim(q)/2 \rfloor -1$. Since $q$ is non-degenerate, the quadric hypersurface $Q \subset \mathbb{P}^{n-1}$ is absolutely irreducible for any $n \geqslant 3$. Thus when $n \geqslant 3$, $K_m$ is, indeed, a field for every $m \geqslant 1$. For $n = 2$, $K_1$ may not be a field, but this will not be a problem for us, because in this case we stop at $m = \lfloor n/2 \rfloor -1 = 0$.

Note also that the Witt indices $w_m(q)$ do not change if we multiply $q$ by a non-zero constant from $K$. Hence, $w_m(q)$ may be viewed as numerical invariants of the quadric $Q$ it cuts out in $\mathbb P^{\dim(q) - 1}$. We will thus sometimes write $w_m(Q)$ in place of $w_m(q)$.

We will be interested in how the Witt index $w_m$ varies in
a family of quadrics or rank $n$. Here $m$ and $n$ are fixed.
 By a family of quadrics we mean a quadric bundle $Q \to Y$, i.e., the zero locus of a line bundle-valued quadratic form $\phi \colon \mathcal{E} \to \mathcal{L}$ over $Y$ in 
$\mathbb P(\mathcal{E})$.

\begin{prop} \label{ex.quadric} Let $k$ be a field $k$ of characteristic different from $2$, let $S$ be a $k$-variety, let $Y \to S$ be a flat morphism of finite type, and let $Q \to Y$ be a quadric bundle over $Y$. Fix integers $m \geqslant 0$ and $d \geqslant 1$. Then 
there exists a countable collection of closed subschemes $\{S_j\}_{j\geqslant 1}$ of $S$ such that
the generic $m$-th Witt index of the fiber $Q_s \to Y_s$ over a geometric point $s$ of $S$ is $\geqslant d$ if and only if $s$ lies in $S_j$ for some $j \geqslant 1$. 
\end{prop}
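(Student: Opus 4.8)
The plan is to realize this as an instance of \Cref{thm.main1}, applied to the property of geometric morphisms over the prime field that says ``the generic $m$-th Witt index of the quadric bundle $V \to W$ is $\geqslant d$''. To make this precise, given an algebraically closed field $k'$ and a quadric bundle $V \to W$ over $k'$ with $W$ a $k'$-variety, we would say the pair $(k', V \to W)$ satisfies $\mc{P}$ if, writing $\eta_1, \dots, \eta_r$ for the generic points of the irreducible components of $W$ and $q_i$ for the quadratic form over $k'(\eta_i)$ cutting out the fiber, one has $w_m(q_i) \geqslant d$ for every $i$. (One has to check this is insensitive to the choice of line-bundle-valued form presenting the quadric bundle, which is immediate since scaling does not change Witt indices.) Once $\mc{P}$ is set up, the two things to verify are the rigidity condition and the specialization condition with respect to $(S, f)$, where $f$ is the composite $Q \to Y \to S$ — or more precisely we work with the morphism $Q \to Y$ of algebraic $S$-stacks and take fibers over geometric points of $S$.

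For rigidity, let $k''/k'$ be an extension of algebraically closed fields and $V \to W$ a quadric bundle over $k'$. The key point is that for a non-degenerate quadratic form $q$ over a field $F$ (here $F = k'(\eta_i)$), the higher Witt indices $w_m(q)$ are stable under extension to $\cl{F}$, hence under any extension of algebraically closed fields: indeed $w_m(q_{F'}) = w_m(q)$ whenever $F'/F$ is such that $q_{F'}$ stays anisotropic after the same number of function-field extensions, and passing to an algebraic closure of the function field tower does not change anisotropy because the relevant Pfister-form obstructions are already split over a finite extension. Concretely, $w_m$ is determined by the splitting pattern, which is a birational invariant computable over any field of definition; a spreading-out argument identical in spirit to the one in the proof of \Cref{rationality-properties-rigidity} reduces the comparison of $w_m$ over $k''(\eta_i'')$ and over $k'(\eta_i')$ to specializing along a $k'$-point of a finite-type $k'$-algebra inside $k''$, using that $k'$ is algebraically closed. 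This gives the rigidity condition, and in particular legitimizes the phrase ``the generic $m$-th Witt index of the fiber $Q_s \to Y_s$'' in the statement.

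For the specialization condition, by \Cref{reduce-to-dvr} we reduce to $S = \Spec(R)$ with $R$ a complete DVR containing $k$, with fraction field $K$ and residue field $\kappa$; we must show that if the generic $m$-th Witt index of $Q_{\cl{K}} \to Y_{\cl{K}}$ is $\geqslant d$ then so is that of $Q_{\cl{\kappa}} \to Y_{\cl{\kappa}}$. The idea is to reinterpret ``$w_m \geqslant d$'' as the existence of a geometric rational section of an auxiliary proper morphism over $Y$, and then quote \Cref{thm.rat-sections} (or rather its proof, via the valuative criterion). Unwinding: having $w_m(q) \geqslant d$ over $k'(\eta_i)$ means that after $m$ successive quadric function-field extensions, $q$ becomes isotropic enough to split off $d$ hyperbolic planes — equivalently, a certain iterated construction of quadric bundles $Q^{(0)} = Q, Q^{(1)} \to Q^{(0)}, \dots, Q^{(m)} \to Q^{(m-1)}$ (each $Q^{(j+1)}$ being the relative quadric attached to the residual form on the total space of $Q^{(j)}$) has the property that the relative Grassmannian of $d$-dimensional totally isotropic subspaces over $Q^{(m)}$ has a geometric rational section over $Y$. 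This Grassmann bundle is proper over $Y$, and $Y$ is flat over $S = \Spec(R)$, so \Cref{thm.rat-sections} applies and propagates the existence of the geometric rational section from the generic fiber to the special fiber; translating back gives $w_m(Q_{\cl{\kappa}}) \geqslant d$. With rigidity and specialization in hand, \Cref{thm.main1} produces the countable family $\{S_j\}$.

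The main obstacle I anticipate is making the reduction ``$w_m \geqslant d$ $\iff$ a specific proper morphism over $Y$ has a geometric rational section'' genuinely clean — in particular, defining the tower $Q^{(0)}, \dots, Q^{(m)}$ of relative quadric bundles functorially over an arbitrary base $Y$ (not just over a field), handling the fact that intermediate function fields $K_m$ may fail to be fields when the rank drops to $2$ (the statement sidesteps this by the convention $m \leqslant \lfloor n/2 \rfloor - 1$, but one must still be careful that the construction degenerates gracefully over non-dense loci), and checking that the residual-form construction behaves well under the base changes and specializations involved. Once this dictionary is set up correctly, everything else is a matter of invoking \Cref{reduce-to-dvr}, \Cref{thm.rat-sections}, and \Cref{thm.main1} in sequence.
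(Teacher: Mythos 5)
Your proposal correctly identifies the two main ingredients -- realize ``$w_m \geqslant d$'' as the existence of a geometric rational section of a proper morphism over $Y$, then invoke the machinery -- and the $m=0$ case (the isotropic Grassmannian over $Y$) is exactly what the paper does. However, there is a genuine gap in your treatment of $m \geqslant 1$, which you yourself flag as ``the main obstacle.'' You propose to build a tower $Q^{(0)}, Q^{(1)}, \ldots, Q^{(m)}$ in which each $Q^{(j+1)}$ is the relative quadric attached to the \emph{residual form} on the total space of $Q^{(j)}$. This does not work over a general base, and the obstruction is not merely technical: the anisotropic kernel of a non-degenerate quadratic form over a DVR is not well-defined (its rank over the generic fiber can differ from its rank over the special fiber), so there is no ``residual form on the total space'' to take the relative quadric of. No amount of care in setting up the dictionary will fix this, because the object you want does not exist.

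The paper sidesteps this entirely by noticing that no residual form is needed. Recall how $w_m$ is defined in the paper: $w_m(q) = w_0(q_{K_m})$, where $K_m = K_{m-1}(Q)$ is the function field of the \emph{original} quadric $Q$ over $K_{m-1}$ -- not the quadric of the anisotropic kernel. Consequently, the correct tower is simply $Y_1 := Q$ and $Q_1 := Q \times_Y Y_1$, iterated: replace $Y_{j}$ by $Q_j$ and $Q_j$ by its pullback along $Q_j \to Y_j$. This is a fibered product of $Q$ with itself over $Y$, defined functorially over any base with no anisotropic-kernel issues, and the generic $m$-th Witt index of $Q_s \to Y_s$ is literally the generic $0$-th Witt index of $(Q_m)_s \to (Y_m)_s$. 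The paper then invokes \Cref{thm.rat-sections} once, in the $m=0$ form (note that $Y_m$ remains flat over $S$ since each step is a smooth quadric bundle over the previous one, so the flatness hypothesis of \Cref{thm.rat-sections} persists). A secondary inefficiency in your proposal is routing through \Cref{thm.main1} by hand-verifying rigidity and specialization; these are already packaged into \Cref{thm.rat-sections}, so once the proper morphism over $Y$ is exhibited, no further verification of rigidity or specialization for $\mc{P}$ is needed. Your rigidity argument would also need tightening -- the Pfister-form remark is not quite the right reason Witt indices are stable under extension of the algebraically closed base field (the clean argument is via the isotropic Grassmannian and spreading-out, as in the proof of \Cref{rationality-properties-rigidity}) -- but since that argument is not needed here, this is moot.
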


Here by the generic $m$-th Witt index of $Q_s \to Y_s$ being $\geqslant d$ we mean that if $\eta\in Y_s$ is the generic point of some irreducible component of $Y_s$, then we view $Q_{\eta}$ is a quadric hypersurface over $k(\eta)$, and we ask that its $m$th Witt index $w_m(Q_{\eta})$ should be $\geqslant d$.

\begin{proof} First assume that $m = 0$. In this case we apply \Cref{thm.rat-sections} to the morphism $f \colon X \to Y$, where $X = \Gr^{\rm iso}(d, Q)$ is the isotropic Grassmannian scheme parametrizing $d$-dimensional linear subspaces 
contained in the fibers of $Q$. Indeed, the generic Witt index of the fiber $Q_s \to Y_s$ is $\geqslant d$ if and only if $f_s \colon X_s \to Y_s$ has a rational section. 

Now assume that $m = 1$. 
In this case we replace $Y$ by $Y_1 := Q$ and $Q$ by $Q_1 := Q \times_Y Y_1$. Then for every geometric point $s \colon \Spec(k) \to S$, 
first Witt index $Q_s \to Y_s$ is $\geqslant d$ if and only if the 
generic Witt index of $(Q_1)_s \to (Y_1)_s$ is $\geqslant d$, and we can appeal to the case, where $m = 0$.

For $m \geqslant 2$, we proceed recursively, replacing $Y_{m-1}$ by
$Q_{m-1}$ and $Q_{m-1}$ by $Q_m := Q_{m-1} \times_{Y_{m-1}} Y_m$.
\end{proof}

\subsection{Variation of the Schur index in a family of central simple algebra}
\label{sect.index}
Recall that by Wedderburn's theorem, every central simple algebra $A$ over a field $K$ can be uniquely written in the form $A \simeq \Mat_s(D)$, where $D$ is a division algebra over $K$. The degree of $D$ over $K$ is called the Schur index of $A$.

\begin{prop} \label{ex.azumaya} Let $k$ be a field, let $Y \to S$ be a flat morphism of $k$-varieties, and let $\mathcal{A}$ be an Azumaya algebra of degree $n$ over $Y$. Fix a positive integer $d$.
Then there exists a countable collection of closed subschemes $\{S_j\}_{j\geqslant 1}$ of $S$ such that the generic Schur index of the fiber $\mathcal{A}_s$ over a geometric point $s$ of $S$ divides $d$ if and only if $s$ lies in $S_j$ for some $j \geqslant 1$. 
\end{prop}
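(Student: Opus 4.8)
The plan is to mirror the proof of \Cref{ex.quadric}, using generalized Severi--Brauer schemes in place of isotropic Grassmannians. First I would reduce to the case $d \mid n$: since the Schur index of a central simple algebra of degree $n$ always divides $n$, for any field $F$ and any central simple $F$-algebra $B$ of degree $n$ one has $\ind(B) \mid d$ if and only if $\ind(B) \mid \gcd(d,n)$. Replacing $d$ by $\gcd(d,n)$ we may thus assume $1 \leqslant d \leqslant n$ and $d \mid n$.

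Next I would introduce the generalized Severi--Brauer $Y$-scheme $X \coloneqq \SB(d,\mathcal{A})$, representing the functor of right ideals of $\mathcal{A}$ that are, Zariski-locally on $Y$, direct summands of reduced rank $d$ (i.e.\ of $\mathcal{O}_Y$-rank $dn$). It is a smooth projective $Y$-scheme, a twisted form of a Grassmannian bundle, whose formation commutes with arbitrary base change. The relevant arithmetic fact is standard: for a field $F$ equipped with a morphism $\Spec F \to Y$, and $\mathcal{A}_F$ the resulting central simple $F$-algebra of degree $n$, the set $\SB(d,\mathcal{A})(F) = \SB(d,\mathcal{A}_F)(F)$ is nonempty if and only if $\ind(\mathcal{A}_F) \mid d$ --- over a splitting field a right ideal of reduced rank $d$ is a $d$-dimensional subspace of the column space, whereas over $F$ the reduced ranks of the nonzero right ideals of $\mathcal{A}_F \simeq \Mat_{n/\ind(\mathcal{A}_F)}(D)$ are exactly the multiples of $\ind(\mathcal{A}_F)$ lying in $\{1,\dots,n\}$.

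I would then apply \Cref{thm.rat-sections} to the projective, hence proper, morphism $f \colon X = \SB(d,\mathcal{A}) \to Y$, using the hypothesis that $Y \to S$ is flat of finite type. This produces a countable family $\{S_j\}_{j\geqslant 1}$ of closed subschemes of $S$ such that, for every geometric point $s$ of $S$, the fiber $f_s \colon \SB(d,\mathcal{A}_s) \to Y_s$ has a rational section if and only if $s$ lies in some $S_j$. It remains to identify ``$f_s$ has a rational section'' with ``the generic Schur index of $\mathcal{A}_s$ divides $d$'', i.e.\ with ``$\ind(\mathcal{A}_\eta) \mid d$ for every generic point $\eta$ of every irreducible component of $Y_s$''. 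Since $f_s$ is smooth and projective, a rational section of $f_s$ exists if and only if the morphism $\Spec k(\eta) \to Y_s$ lifts to $X_s$ for each such $\eta$: from a lift one first obtains $\Spec \mathcal{O}_{Y_s,\eta} \to X_s$ by formal smoothness (the maximal ideal of the Artinian local ring $\mathcal{O}_{Y_s,\eta}$ being nilpotent), then a section over an open neighborhood of $\eta$; these neighborhoods may be chosen pairwise disjoint, since distinct generic points of $Y_s$ lie on distinct irreducible components, so the local sections glue to a rational section, and the converse is immediate. By the displayed fact applied with $F = k(\eta)$, such a lift exists exactly when $\ind(\mathcal{A}_\eta) \mid d$, which completes the argument.

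The step I expect to be the main obstacle is this last translation: setting up $\SB(d,\mathcal{A})$ over the base $Y$ with the correct fiberwise nonemptiness criterion, and carefully reformulating ``$f_s$ has a rational section'' in terms of the generic points of the possibly non-reduced geometric fiber $Y_s$. The algebra (reduction to $d \mid n$, the index criterion over a field) is routine, and the construction of the generalized Severi--Brauer scheme is standard but must be cited with care.
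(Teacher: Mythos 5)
Your proposal is correct and follows essentially the same route as the paper: apply \Cref{thm.rat-sections} to the proper morphism $\SB(d,\mathcal{A}) \to Y$ and use the standard fact that $\SB(d,\mathcal{A}_F)$ has an $F$-point exactly when $\ind(\mathcal{A}_F)$ divides $d$. Your extra care (the reduction to $d \mid \gcd(d,n)$ and the explicit translation between rational sections of $f_s$ and liftings at the generic points of the possibly non-reduced fiber $Y_s$) fills in details the paper leaves implicit, but the argument is the same.
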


Here $\mathcal{A}_s$ is an Azumaya algebra of degree $n$ over $Y_s$. By the generic Schur index of $\mathcal{A}_s$ dividing $d$, we mean that if
$\eta \in Y_s$ is a generic point of $Y_s$, then we view $A_{\eta}$ as a central simple algebra of degree $n$ over $k(\eta)$, and we ask that Schur index of $\mathcal{A}_{\eta}$ over $K$ should divide $d$. (This should be true for every irreducible component of $Y_s$.)

\begin{proof} We apply \Cref{thm.rat-sections} to the morphism $f \colon X \to Y$, where $X =\SB(d, \mc{A})$ is the generalized Severi-Brauer variety parametrizing 
right ideals of dimension $dn$ in the fibers of $\mathcal{A}$.  
Indeed, the generic Schur index of the fiber $\mc{A}_s$ divides $d$ if and only if $f_s \colon X_s \to Y_s$ has a rational section. 
\end{proof}

\section{Properties of torsors over fields}
\label{sect.properties-of-torsors}

Let $k$ be a field, and let $G$ be a $k$-group. For every $k$-scheme $V$, there is a natural equivalence between the groupoid of $G$-torsors over $V$ and the groupoid $B_kG(V)$ of morphisms $V \to B_kG$. 

\begin{defin}\label{defin:property-torsors}
    Let $k$ be a field, and let $G$ be a $k$-group. 
    \begin{itemize}
        \item[(1)] By definition, a \emph{property of $G$-torsors over fields} is a property $\mc{P}$ of geometric morphisms $(k', \on{Spec}(K) \to B_{k'}G)$ over $k$ (cf. \Cref{defin:geometric-morphism}), where $k'$ is algebraically closed field and $K/k'$ is a field extension. That is, if a pair $(k',\varphi\colon V\to W)$ as in \Cref{defin:geometric-morphism} satisfies $\mc{P}$, then $V=\on{Spec}(K)$ for some field extension $K/k'$ and $W=B_{k'}G$. We say that a $G$-torsor over $K$ satisfies $\mc{P}$ if the corresponding pair $(k', \on{Spec}(K) \to B_{k'}G)$ satisfies $\mc{P}$.
        \item[(2)] A \emph{property of $G$-torsors over varieties} is a property $\mc{P}$ of geometric morphisms $(k', V \to B_{k'}G)$ over $k$, where $V$ is a $k'$-variety. We say that a $G$-torsor over $V$ satisfies $\mc{P}$ if the corresponding pair $(k', V \to B_{k'}G)$ satisfies $\mc{P}$. 
        \item[(3)] A property $\mc{P}$ of $G$-torsors over fields determines a property of $G$-torsors over varieties: for every algebraically closed field $k'$ containing $k$ and every $k'$-variety $V$, a $G$-torsor $E\to V$ satisfies $\tilde{\mc{P}}$ if and only if for every generic point $\eta\in V$, the restriction $E_{k'(\eta)}\to \on{Spec}(k'(\eta))$ satisfies $\mc{P}$. If the $G$-torsor $E\to V$ satisfies $\mc{P}$, we say that it \emph{satisfies $\mc{P}$ generically}. We refer to $\tilde{\mc{P}}$ as \emph{the property of satisfying $\mc{P}$ generically}.
    \end{itemize}
\end{defin}

\begin{rmk} All properties of $G$-torsors over varieties that we will consider in this paper will be of the form $\tilde{\mc{P}}$ for some property $\mc{P}$ of $G$-torsors over fields. In particular, they will only depend on the fibers of the $G$-torsor $E \to V$ over the generic points of the $k'$-variety $V$, that is, on the pullback to $\on{Spec}(k'(V))$.
\end{rmk}

\begin{defin}\label{defin:specialization-torsors}
    Let $k$ be a field, let $G$ be a $k$-group, let $\mc{P}$ be a property of $G$-torsors over fields, let $S$ be a locally noetherian $k$-scheme, let $X$ be an $S$-scheme, let $\alpha\colon E\to X$ be a $G$-torsor. Assume that $\mc{P}$ satisfies the rigidity condition. We will say that $\mc{P}$ \emph{satisfies the specialization condition with respect to $(S,\alpha)$} if it satisfies the specialization condition with respect to $(S,f)$ (in the sense of \Cref{defin:specialization}), where $f\colon X\to [S/G]$ is the classifying morphism of $\alpha$.
\end{defin}

\begin{rmk}
    In order to fit properties of torsors into the general framework of properties of morphisms given in \Cref{sect.morphisms}, we have formally defined them in \Cref{defin:property-torsors}  via their classifying morphisms. Note that the map $E \to X$ of \Cref{defin:property-torsors}(3) does not fit into this framework directly because a $G$-torsor is not just a morphism but a morphism with extra structure. This extra structure, namely a $G$-action on $E$, is captured by the classifying morphism. In practice, it is more natural to work with the torsor than with its classifying morphism; see for instance Examples~\ref{ex.ed} and \ref{ex.rd} below.
\end{rmk}

 \Cref{thm.main1} specializes to properties of $G$-torsors over varieties as follows.

\begin{thm}\label{thm.main.torsors}
    Let $k$ be a field, let $G$ be a $k$-group, let $\mc{P}$ be a property of $G$-torsors over varieties, let $S$ be a $k$-scheme of finite type, let $\alpha\colon E \to X$ be a $G$-torsor of $S$-schemes of finite type. Suppose that $\mc{P}$ satisfies the rigidity condition, as well as the specialization condition with respect to $(S,\alpha)$.  Then there exists a countable family $\{S_j\}_{j\geqslant 1}$ of closed subschemes of $S$ such that the following are equivalent for any geometric point
    $s$ of $S$:
    
    \smallskip (i) the $G$-torsor $\alpha_s \colon E_s \to X_s$ over $s$ satisfies $\mc{P}$, and 
    
    \smallskip (ii) $s$ lies in $S_j$ for some $j \geqslant 1$.    
\end{thm}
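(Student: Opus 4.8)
The plan is to deduce \Cref{thm.main.torsors} directly from \Cref{thm.main1} by unwinding \Cref{defin:property-torsors} and \Cref{defin:specialization-torsors}, so that the statement becomes essentially a translation exercise. A property $\mc{P}$ of $G$-torsors over varieties is by definition a property of geometric morphisms $(k', V \to B_{k'}G)$ over $k$, so it is in particular a property of geometric morphisms over the prime field $k_0$ of $k$ (any $k_0$-algebraically closed field containing $k_0$ in whose category we evaluate $\mc{P}$ that does not contain $k$ is simply never relevant: we extend $\mc{P}$ to such fields by declaring it false, which trivially preserves rigidity on the relevant class). Hence \Cref{thm.main1} applies verbatim once we have the morphism of stacks to feed into it.

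The first step is to produce that morphism. Let $f\colon X \to [S/G]$ be the classifying morphism of the torsor $\alpha\colon E\to X$; since $X$ and $E$ are $S$-schemes of finite type and $G$ is of finite type over $k$, the stack $[S/G]$ is an algebraic $S$-stack of finite type and $f$ is a morphism of algebraic $S$-stacks of finite type, so the hypotheses of \Cref{thm.main1} are met on that side. The second step is to check that, under this dictionary, the property $\mc{P}_{S,f}$ of points of $S$ attached to $f$ in \Cref{sect.morphisms} agrees with ``the geometric fiber torsor $\alpha_{\cl{s}}\colon E_{\cl{s}}\to X_{\cl{s}}$ satisfies $\mc{P}$'': this is immediate because for a geometric point $\cl{s}$ of $S$ the fiber $f_{\cl{s}}\colon X_{\cl{s}}\to B_{k(\cl{s})}G$ is precisely the classifying morphism of $\alpha_{\cl{s}}$, and by \Cref{defin:property-torsors}(2) the pair $(k(\cl{s}), f_{\cl{s}})$ satisfies $\mc{P}$ exactly when the $G_{\cl{s}}$-torsor $\alpha_{\cl{s}}$ does. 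The hypothesis that $\mc{P}$ satisfies the specialization condition with respect to $(S,\alpha)$ is, by \Cref{defin:specialization-torsors}, literally the statement that $\mc{P}$ satisfies the specialization condition with respect to $(S,f)$, and rigidity is assumed outright. Thus all hypotheses of \Cref{thm.main1} hold, and the countable family $\{S_j\}$ it produces is the desired one, with equivalence (i) $\Leftrightarrow$ (ii) transported back along the dictionary.

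I do not expect any serious obstacle here; the only points requiring a sentence of care are (a) confirming that $[S/G]$ is an algebraic $S$-stack of finite type and $f$ a finite-type morphism, which follows from \cite[Tags 04UI, 0CQJ]{stacks-project} together with $G$ being of finite type over $k$ and $X\to S$ of finite type, and (b) observing that a property of geometric morphisms over $k$ may be regarded as one over $k_0$ by extending it (arbitrarily, say as ``false'') to algebraically closed fields containing $k_0$ but not $k$, without disturbing the rigidity or specialization conditions which only concern the fibers of $f$, all of whose base fields contain $k$. With these in place the proof is a direct invocation of \Cref{thm.main1}.

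\begin{proof}
    Let $f\colon X\to [S/G]$ be the classifying morphism of the $G$-torsor $\alpha\colon E\to X$. Since $G$ is a $k$-group of finite type, $[S/G]$ is an algebraic $S$-stack of finite type; see \cite[Tags 04UI, 0CQJ]{stacks-project}. As $X\to S$ is of finite type, $f$ is a morphism of algebraic $S$-stacks of finite type. Regarding $\mc{P}$ as a property of geometric morphisms over the prime field $k_0$ of $k$ (extending it, say as false, to algebraically closed fields containing $k_0$ but not $k$; this affects neither the rigidity nor the specialization condition, as those only involve fibers of $f$, whose base fields all contain $k$), we are in the situation of \Cref{thm.main1}: $\mc{P}$ satisfies the rigidity condition by hypothesis, and it satisfies the specialization condition with respect to $(S,f)$ by \Cref{defin:specialization-torsors} and our hypothesis on $(S,\alpha)$.

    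\Cref{thm.main1} therefore yields a countable family $\{S_j\}_{j\geqslant 1}$ of closed subschemes of $S$ such that, for every geometric point $s$ of $S$, the fiber $f_s\colon X_s\to [S/G]_s=B_{k(s)}G$ satisfies $\mc{P}$ if and only if $s$ lies in $S_j$ for some $j\geqslant 1$. Finally, for a geometric point $s$ of $S$, the morphism $f_s\colon X_s\to B_{k(s)}G$ is the classifying morphism of the $G_s$-torsor $\alpha_s\colon E_s\to X_s$, so by \Cref{defin:property-torsors}(2) the pair $(k(s),f_s)$ satisfies $\mc{P}$ if and only if the $G$-torsor $\alpha_s\colon E_s\to X_s$ does. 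This gives the equivalence of (i) and (ii).
\end{proof}
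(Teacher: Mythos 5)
Your proof is correct and takes exactly the paper's route: the paper's entire proof is ``Apply \Cref{thm.main1} to the morphism $f\colon X\to [S/G]$ which classifies $\alpha$,'' and you have simply spelled out the bookkeeping (finite-typeness of $[S/G]$, the identification of $\mc{P}_{S,f}$ with the torsor property via \Cref{defin:property-torsors} and \Cref{defin:specialization-torsors}, and the harmless extension of $\mc{P}$ to a property over the prime field) that the paper leaves implicit.
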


\begin{proof}
    Apply \Cref{thm.main1} to the morphism $f\colon X\to [S/G]$ which classifies $\alpha$.
\end{proof}

In the setting of properties of torsors over fields, the following refinement of the valuative criterion \Cref{reduce-to-dvr} is available.

\begin{lemma}\label{reduce-to-dvr-torsors}
    Let $k$ be a field, let $G$ be a $k$-group, let $\mc{P}$ be a property of $G$-torsors over fields which satisfies the rigidity condition, let $S$ be a locally noetherian $k$-scheme, let $\alpha\colon E\to X$ be a morphism of algebraic $S$-stacks. The following statements are equivalent.
    \begin{itemize}
        \item[(1)] Property $\tilde{\mc{P}}$ (cf. \Cref{defin:property-torsors}(3)) satisfies the specialization condition with respect to $(S,\alpha)$.
        \item[(2)] For every morphism $\on{Spec}(R)\to S$, where $R$ is a complete discrete valuation ring $R$, property $\tilde{\mc{P}}$ satisfies the specialization condition with respect to $(\on{Spec}(R),\alpha_R)$.
    \end{itemize}
    Suppose further that $X$ is a faithfully flat $S$-scheme of finite type with geometrically integral $S$-fibers. Then (1) and (2) are also equivalent to the following statement about $\mc{P}$.
    \begin{itemize}
        \item[(3)] For every commutative diagram with cartesian squares
        \[
\xymatrix{
T \ar[r] \ar[d] & E_R \ar[r] \ar[d]^{\alpha_R} & E \ar[d]^{\alpha} \\
\operatorname{Spec}(A) \ar[r]^{\varphi} & X_R \ar[r] \ar[d] & X \ar[d] \\
& \operatorname{Spec}(R) \ar[r] & S
}
\]
        where $R$ and $A$ are complete discrete valuation rings and $\varphi$ sends the generic (resp. closed) point of $A$ to the generic point of the generic (resp. closed) fiber of $X_R\to\on{Spec}(R)$, property $\mc{P}$ satisfies the specialization condition with respect to $(\on{Spec}(A),T\to\on{Spec}(A))$.
    \end{itemize}
\end{lemma}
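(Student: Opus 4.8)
The plan is to prove the three equivalences in stages, using \Cref{reduce-to-dvr} (which already handles the locally noetherian base) and then analyzing what the specialization condition for $\tilde{\mc{P}}$ over a complete DVR base says in terms of $\mc{P}$ over fields.

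\smallskip

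\textbf{Equivalence of (1) and (2).} This is a direct application of \Cref{reduce-to-dvr} to the classifying morphism $f\colon X\to [S/G]$ of $\alpha$: the property $\tilde{\mc{P}}$ of geometric morphisms satisfies the rigidity condition (it is built from $\mc{P}$, which does), so \Cref{reduce-to-dvr} gives that the specialization condition with respect to $(S,f)$ is equivalent to the specialization condition with respect to $(\on{Spec}(R),f_R)$ for all complete DVRs $R$ mapping to $S$. Unwinding \Cref{defin:specialization-torsors}, these are exactly statements (1) and (2). So the first equivalence requires essentially no new work.

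\smallskip

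\textbf{Equivalence with (3) under the extra flatness/integrality hypothesis.} Here is the crux. After (1) $\Leftrightarrow$ (2) we may assume $S=\on{Spec}(R)$ with $R$ a complete DVR, with generic point $\eta_R$ and closed point $s_R$. Statement (2) for this $R$ says: if $\eta_R$ satisfies $\tilde{\mc{P}}_{\on{Spec}(R),f_R}$ then so does $s_R$. Now I unwind $\tilde{\mc{P}}$: since $X_R\to\on{Spec}(R)$ has geometrically integral fibers, the generic fiber $X_K$ (where $K=\on{Frac}(R)$) and the special fiber $X_\kappa$ ($\kappa$ the residue field) each have a \emph{unique} generic point, say $\xi_K$ and $\xi_\kappa$, with residue fields the function fields $K(X_K)$ and $\kappa(X_\kappa)$. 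By \Cref{defin:property-torsors}(3) together with the rigidity condition, $\eta_R$ satisfies $\tilde{\mc{P}}_{\on{Spec}(R),f_R}$ iff the $G$-torsor $E_R$ restricted to $\on{Spec}(K(X_K))$ (equivalently, to a geometric point over $\xi_K$) satisfies $\mc{P}$, and similarly for $s_R$ with $\kappa(X_\kappa)$. So statement (2)-for-this-$R$ becomes: $\mc{P}$ holds for $E$ at $\xi_K$ $\Rightarrow$ $\mc{P}$ holds for $E$ at $\xi_\kappa$. The point of statement (3) is that this implication is itself a specialization statement along a DVR: the local ring $A_0\coloneqq \mc{O}_{X_R,\xi_\kappa}$ is a DVR (its maximal ideal is generated by a uniformizer of $R$, as in the proof of \Cref{thm.rat-sections}, using flatness of $X_R\to\on{Spec}(R)$), with residue field $\kappa(X_\kappa)$ and fraction field $K(X_K)$; completing it gives a complete DVR $A$ fitting into exactly the diagram in (3). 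Pulling $\alpha_R$ back along $\on{Spec}(A)\to X_R$ yields a $G$-torsor $T\to\on{Spec}(A)$, and the specialization condition for $\mc{P}$ with respect to $(\on{Spec}(A),T\to\on{Spec}(A))$ is precisely the implication ``$\mc{P}$ at the generic point of $A$ $\Rightarrow$ $\mc{P}$ at the closed point of $A$,'' which unwinds (via rigidity, since $K(X_K)$ and the fraction field of $A$ have the same algebraic closure, and likewise on the special side) to the implication extracted above. This gives (2) $\Leftrightarrow$ (3) once one checks that \emph{every} diagram as in (3) arises this way up to the relevant base changes --- for the forward direction one takes an arbitrary complete DVR $R$ over $S$ and the canonical $A$ just described; for the reverse direction one observes that any $\varphi$ as in (3), sending generic point to generic point of the generic fiber and closed point to generic point of the special fiber, must factor through (a localization-completion of) $A_0$, so the hypothesis in (3) recovers statement (2).

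\smallskip

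\textbf{Anticipated main obstacle.} The routine bookkeeping is in matching up the residue fields and their algebraic closures so that the rigidity condition genuinely transports $\mc{P}$ between $\on{Spec}(A)$ and $\on{Spec}(R)$-at-the-generic-points-of-fibers; this is where one must be careful that $A_0=\mc{O}_{X_R,\xi_\kappa}$ really is a DVR, which needs both the flatness of $X_R\to\on{Spec}(R)$ (so the uniformizer $t\in R$ is a nonzerodivisor in $A_0$ and generates $\mathfrak{m}_{A_0}$) and the fact that $\xi_\kappa$ has codimension one in $X_R$ --- this in turn uses that $X_\kappa$ is integral of the ``expected'' dimension, i.e. that $X_R\to\on{Spec}(R)$ is flat with integral fibers. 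The genuinely delicate point, and the one I expect to spend the most care on, is the reverse implication (3) $\Rightarrow$ (2): one must argue that the single canonical diagram built from a given $R$ is enough, i.e. that knowing the specialization condition for that one $T\to\on{Spec}(A)$ suffices to conclude (2) for that $R$, rather than needing to quantify over all $A$ and $\varphi$. This follows because the implication to be proved --- ``$\mc{P}$ at $\xi_K$ implies $\mc{P}$ at $\xi_\kappa$'' --- depends only on the torsor $E$ pulled back to the two points $\xi_K,\xi_\kappa$, and that data is exactly captured by $T\to\on{Spec}(A)$ for the canonical $A$; conversely, any other admissible $(A,\varphi)$ has the same generic and special fibers over the two points in question (up to the base changes that rigidity renders invisible), so the family of hypotheses in (3) is not stronger than the single canonical instance.
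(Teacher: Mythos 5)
Your proposal is correct and follows essentially the same route as the paper: (1) $\Leftrightarrow$ (2) is the direct specialization of \Cref{reduce-to-dvr}, and the equivalence with (3) is obtained by identifying the generic and special fibers of $T\to\on{Spec}(A)$ with the torsor at the generic points of the generic and special fibers of $X_R\to\on{Spec}(R)$ --- any admissible $\varphi$ factors through $A_0=\mc{O}_{X_R,\xi_\kappa}$, which is a DVR by the same flatness-plus-integral-special-fiber argument used in \Cref{thm.rat-sections}, and the canonical completion of $A_0$ furnishes the diagram needed for (3) $\Rightarrow$ (2). One correction: since $A$ is the completion of $A_0$, its fraction field is in general a transcendental extension of $\on{Frac}(A_0)=K(X_K)$, so the parenthetical claim that these two fields ``have the same algebraic closure'' is false; the transport of $\mc{P}$ across that extension (and across the analogous extensions to geometric fibers) is supplied by the rigidity condition itself, which is exactly the tool you name and the one the paper invokes at this step.
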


\begin{proof} The equivalence between (1) and (2) is a special case of \Cref{reduce-to-dvr}. 

Consider a diagram as in the statement of (3). Let $K$ be the fraction field of $A$, and let $x_0\in X$ and $s_0\in S$ be the images of the generic point of $\on{Spec}(A)$. By assumption, $x_0$ is the generic point of a fiber of $X\to S$. We have a commutative diagram with cartesian squares
\[
\xymatrix{
T_K \ar[r] \ar[d] & E_{k(x_0)} \ar[r] \ar[d] & E_{s_0} \ar[d] \\
\operatorname{Spec}(A) \ar[r] & \operatorname{Spec}(k(x_0)) \ar[r] & X_{s_0}.
}
\]
Thus, letting $\cl{s}_0$ be a geometric point of $S$ lying over $s_0$, by the rigidity condition a geometric generic fiber of $T\to\on{Spec}(A)$ satisfies $\mc{P}$ if and only if $E_{\cl{s}_0}\to X_{\cl{s}_0}$ satisfies $\mc{P}$ generically (that is, satisfies $\tilde{\mc{P}}$). Similarly, letting $s_1\in S$ be the image of the closed point of $\on{Spec}(A)$ and $\cl{s}_1$ be a geometric point of $S$ lying over $s_1$, a geometric special fiber of $T\to\on{Spec}(A)$ satisfies $\mc{P}$ if and only if $E_{\cl{s}_1}\to X_{\cl{s}_1}$ satisfies $\tilde{\mc{P}}$. 

Observe that $s_1$ is a specialization of $s_0$. Therefore, the previous paragraph shows that the specialization condition for $\tilde{\mc{P}}$ with respect to $(S,\alpha)$ implies the specialization condition for $\mc{P}$ with respect to $(\on{Spec}(A),T\to\on{Spec}(A))$, so that (1) implies (3). The previous paragraph also shows that, in order to prove that (3) implies (2), it suffices to extend every commutative diagram with cartesian squares
\[
\xymatrix{
E_R \ar[r] \ar[d]^{\alpha_R} & E \ar[d]^{\alpha} \\
X_R \ar[r] \ar[d] & X \ar[d] \\
\operatorname{Spec}(R) \ar[r] & S,
}
\]
where $R$ is a complete discrete valuation ring, to a commutative diagram as in the statement of (3). For this, we let $A'$ be the local ring of the generic point of the closed fiber of $X_R\to\on{Spec}(R)$, viewed as a point of $X_R$.    Since the closed fiber of $X_R\to \on{Spec}(R)$ is geometrically integral, the maximal ideal of $A'$ is generated
by the image $\nu$ of a uniformizer of $R$. Since $X_R\to \on{Spec}(R)$ is flat and surjective (the empty scheme is not integral) it is faithfully flat. It follows that $\nu$ is not a zero-divisor in $A'$, and so $A'$ is a discrete valuation ring. We have a natural map $\on{Spec}(A')\to X_R$. It now suffices to let $A$ be the completion of $A'$, and to let $\varphi$ be the composite $\on{Spec}(A)\to \on{Spec}(A')\to X_R$.
\end{proof}

\begin{example} \label{ex.ed}
    Let $k$ be a field, let $G$ be a $k$-group, and let $n$ be a non-negative integer. 
    Consider the following property $\mc{P}_n$ of $G$-torsors over fields: for every field extension $K/k'$, where $k'$ is an algebraically closed field containing $k$, a $G$-torsor $E\to \on{Spec}(K)$ satisfies $\mc{P}_n$ if and only if its essential dimension over $k'$ is  $\leqslant n$.
    Under a mild assumption on the characteristic of $k$\footnote{More specifically, we require that $G$ should be in good characteristic; see \Cref{def.good-characteristic}.}, the property of satisfying $\mc{P}_n$ generically satisfies the rigidity condition by \cite[Lemma 2.2]{reichstein2022behavior}. 
    For the specialization condition, by \Cref{reduce-to-dvr-torsors} it suffices to show that, for every discrete valuation ring $A$ containing $k$ and for every $G$-torsor $E\to \on{Spec}(A)$, the essential dimension of a geometric generic fiber of $E\to \on{Spec}(A)$ is greater than or equal to the essential dimension of a geometric special fiber of $E\to \on{Spec}(A)$. This is proved in \cite[Theorem 6.4]{reichstein2022behavior}.

Therefore \Cref{thm.main.torsors} applies, and shows that in a family of $G$-torsors $E \to X$ over a $k$-scheme of finite type $S$, where the morphism $X \to S$ is flat with geometrically integral fibers, the geometric points $s$ of $S$ such that the generic fiber $E_{k(X_s)}\to \on{Spec}(k(X_s))$ has essential dimension $\leqslant n$ belong to countable union of closed subschemes. See \cite[Theorem 1.2]{reichstein2023behavior}, where the more general case of  primitive generically-free $G$-varieties is considered.
\end{example}

\begin{example} \label{ex.rd}
Let $k$ be a field, let $G$ be a $k$-group, and let $n$ be a non-negative integer.  The assertion of the previous example continues to hold if essential dimension is replaced by resolvent degree. More precisely, let 
$\mc{P}_n$ be the following property of $G$-torsors over fields: for every field extension $K/k'$, where $k'$ is an algebraically closed field containing $k$, a $G$-torsor $E\to \on{Spec}(K)$ satisfies $\mc{P}_n$ if and only if its resolvent degree over $k'$ is $\leqslant n$.\footnote{For the definition and basic properties of the resolvent degree of a $G$-torsor over a field $K/k$, where $G$ is an algebraic group over $k$, we refer the reader to \cite{reichstein-hp13}. In the case where $G$ is a finite group and $\Char(k) = 0$ an earlier and more geometric treatment can be found in Farb and Wolfson~\cite{farb-wolfson}.} Let $\tilde{\mc{P}}_n$ be the property of satisfying $\mc{P}_n$ generically. We claim that property $\tilde{\mc{P}}_n$ satisfies the rigidity and specialization conditions. For the case $n=0$, see \Cref{ex.split}. Here we will assume that $n\geqslant 1$.

\smallskip

The rigidity condition for property $\tilde{\mathcal{P}}_n$ asserts that, for every extension $k\subset k'\subset k''$, where $k'$ and $k'$ are algebraically closed fields, every integral $k'$-variety $V$, and every $\tau\in H^1(k'(V),G)$, we have 
\[\rd_{k'}(\tau) \leqslant n\quad \iff \quad \rd_{k''}(\tau_{k''(V)}) \leqslant n.\] For the inequality $\rd_{k'}(\tau) \geqslant \rd_{k''}(\tau_{k''(V)})$, see 
the proof of~\cite[Proposition 8.3(a)]{reichstein-hp13}, in particular, formula (8) there. Conversely, the inequality $\rd_{k''}(\tau_{k''(V)}) \leqslant
\max \{ \rd_{k'}(\tau), \, 1 \}$ is established in the proof of~\cite[Proposition 9.1]{reichstein-hp13}; in particular, see formula (12) there. Note that Propositions 8.3 and 9.1 in \cite{reichstein-hp13} are proved in a more general setting, where $\tau\in \mc{F}(K)$ for some field extension $K/k$ and some functor $\mathcal{F}$ from the category of field extensions of $k$ to the category of sets, under the assumption that $\mathcal{F}$ satisfies conditions (7) and (11) of \cite{reichstein-hp13}. These results can be applied to the functor $\mathcal{F} := H^1(-, G)$ because this functor satisfies conditions (7) (obvious) and (11); see \cite[Lemma 12.2]{reichstein-hp13}. Thus property $\tilde{\mc{P}}_n$ satisfies the rigidity condition. 

For the specialization condition, by \Cref{reduce-to-dvr-torsors} it suffices to show that, for every discrete valuation ring $A$ containing $k$ and for every $G$-torsor $E\to \on{Spec}(A)$, the resolvent degree of a geometric generic fiber of $E\to \on{Spec}(A)$ is greater than or equal to the resolvent degree of a geometric special fiber of $E\to \on{Spec}(A)$.  This is established in the course of the proof of \cite[Proposition 13.1]{reichstein-hp13}; in particular, see formula (15) there. 

Thus \Cref{thm.main.torsors} applies and shows that, for a $k$-scheme of finite type, a flat $S$-scheme of finite type with geometrically integral $S$-fibers, a $G$-torsor $E \to X$, and an integer $n \geqslant 1$, the images of the geometric points $s$ of $S$ such that the generic fiber of $E_s\to X_s$ has resolvent degree $\leqslant n$ belong to countable union of closed subschemes.
\end{example}

We conclude this section with the following variant of \Cref{thm.main2} for $G$-torsors over fields, which will be useful in \Cref{sect.strongly-unramified}.

\begin{thm}\label{thm.main2-torsors} 
    Let $k$ be an algebraically closed field, let $G$ be a $k$-group, let $\mc{P}$ be a property of $G$-torsors over fields, let $S$ be an irreducible $k$-scheme of finite type, let $X\to S$ be a faithfully flat morphism of finite type such that a general fiber of $X\to S$ is geometrically integral, let $\alpha\colon E\to X$ be a $G$-torsor, and let $s_0\in S(k)$ be such that $X_{s_0}$ is reduced and $\alpha_{s_0}$ does not satisfy $\mc{P}$ generically. Assume that $\mc{P}$ satisfies the rigidity condition, as well as the specialization condition with respect to all $G$-torsors $T\to\on{Spec}(A)$, where $A$ is a complete discrete valuation ring containing $k$. Then the set of $s\in S(k)$ such that $\alpha_s$ does not satisfy $\mc{P}$ generically is very general.
\end{thm}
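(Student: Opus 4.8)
The plan is to deduce the statement from \Cref{thm.main2}, applied to the classifying morphism $f\colon X\to[S/G]$ of $\alpha$ and to the property $\tilde{\mc{P}}$ of geometric morphisms associated to $\mc{P}$ as in \Cref{defin:property-torsors}(3). Since $\mc{P}$ satisfies the rigidity condition, so does $\tilde{\mc{P}}$, and by hypothesis $f_{s_0}$ does not satisfy $\tilde{\mc{P}}$; so it only remains to produce the two inputs of \Cref{thm.main2}: a Zariski dense open $U\subset S$ for which $\tilde{\mc{P}}$ satisfies the specialization condition with respect to $(U,\alpha_U)$, and a discrete valuation ring $R$ with a morphism $\Spec(R)\to S$ sending the closed point to $s_0$ and the generic point into $U$, for which $\tilde{\mc{P}}$ satisfies the specialization condition with respect to $(\Spec(R),\alpha_R)$. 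Granting these, \Cref{thm.main2} yields precisely that the set of $s\in S(k)$ for which $f_s$, equivalently $\alpha_s$, fails to satisfy $\tilde{\mc{P}}$ — that is, fails to satisfy $\mc{P}$ generically — is very general.

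For the open $U$: since $X\to S$ is faithfully flat of finite type and a general fiber is geometrically integral, there is a dense open $U\subset S$, containing the generic point $\eta$ of $S$, over which $X_U\to U$ is faithfully flat of finite type with geometrically integral fibers. The hypothesis supplies the specialization condition for $\mc{P}$ with respect to every $G$-torsor over a complete discrete valuation ring containing $k$; in particular it gives condition~(3) of \Cref{reduce-to-dvr-torsors} for $(U,\alpha_U)$, because the torsors $T\to\Spec(A)$ appearing there are pullbacks of $E_U$ and each such $A$ is a complete discrete valuation ring containing $k$. By \Cref{reduce-to-dvr-torsors}, $\tilde{\mc{P}}$ then satisfies the specialization condition with respect to $(U,\alpha_U)$.

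For the discrete valuation ring $R$: as $S$ is irreducible with generic point $\eta\in U$, and $s_0$ lies in the closure of $\eta$, by \cite[Proposition~7.1.9]{ega2} there are a discrete valuation ring $R$ and a morphism $\Spec(R)\to S$ carrying the generic point to $\eta$ and the closed point to $s_0$; after completing, we may take $R$ complete. As $\Spec(R)$ has only two points, checking the specialization condition for $\tilde{\mc{P}}$ with respect to $(\Spec(R),\alpha_R)$ reduces to showing that if $\alpha_{\cl{\eta}}$ satisfies $\tilde{\mc{P}}$ (for a geometric point $\cl{\eta}$ over $\eta$ factoring through the generic point of $\Spec(R)$) then $\alpha_{\cl{s}_0}$ satisfies $\tilde{\mc{P}}$. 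One cannot invoke \Cref{reduce-to-dvr-torsors} here, since the special fiber $X_{s_0}$ of $X_R\to\Spec(R)$ is only assumed reduced, not geometrically integral; instead one works one component at a time. The scheme $X_R$ is irreducible — it is flat over the discrete valuation ring $R$, and its generic fiber $X_\eta\times_{k(\eta)}\operatorname{Frac}(R)$ is integral — with function field $F$; and for each generic point $x$ of the reduced special fiber $(X_{s_0})_\kappa\subset X_R$, $\kappa$ the residue field of $R$, the local ring $A_x\coloneqq\mc{O}_{X_R,x}$ is a discrete valuation ring: the image $t$ of a uniformizer of $R$ generates $\mathfrak{m}_{A_x}$ (flatness of $X_R/R$ and reducedness of $X_{s_0}$ at $x$ ensure that the special fiber is cut out at $x$ by $t$) and is a nonzerodivisor, so $A_x$ is regular of dimension one, with $\operatorname{Frac}(A_x)=F$ and residue field the function field of the corresponding component of $(X_{s_0})_\kappa$. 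Completing $A_x$ and pulling $E_R$ back along $\Spec(\widehat{A_x})\to X_R$ gives a $G$-torsor $T_x\to\Spec(\widehat{A_x})$ over a complete discrete valuation ring containing $k$; the hypothesis then says that if the geometric generic fiber of $T_x$ satisfies $\mc{P}$, so does its geometric special fiber. Using rigidity, exactly as in the proof of \Cref{reduce-to-dvr-torsors}, to identify the geometric generic fiber of $T_x$ with the geometric generic fiber of $\alpha_R$ and the geometric special fiber of $T_x$ with the restriction of $\alpha_{\cl{s}_0}$ to the generic point of the corresponding component of $X_{\cl{s}_0}$, and letting $x$ run over the generic points of $X_{s_0}$, we obtain the required implication.

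With $U$ and $R$ in place, \Cref{thm.main2} finishes the proof. The main obstacle is the third step: because the special fiber of the auxiliary family $X_R\to\Spec(R)$ is only reduced (not geometrically integral), one must handle each generic point of $X_{s_0}$ separately via the discrete valuation rings $\mc{O}_{X_R,x}$, and keep careful track of the various algebraically closed fields, using rigidity to pass between the geometric fibers of the torsors $T_x$ and those of $\alpha_R$.
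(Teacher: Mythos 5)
Your proof is correct and follows the same overall strategy as the paper's: both reduce to \Cref{thm.main2}, supply the dense open $U$ via \Cref{reduce-to-dvr-torsors}, and produce a complete DVR $R$ over $\Spec(R)\to S$ mapping the generic point into $U$ and the closed point to $s_0$. The only place where you genuinely diverge is in verifying the specialization condition for $(\Spec(R),\alpha_R)$: you assert that \Cref{reduce-to-dvr-torsors} cannot be used because $X_{s_0}$ need not be irreducible and then essentially re-run the lemma's DVR construction $\mc{O}_{X_R,x}$ component by component. The paper instead first removes from $X$ the closed union of all but one irreducible component of $X_{s_0}$ (the one witnessing the failure of $\mc{P}$); after this harmless shrinking the special fiber becomes integral, hence geometrically integral over $\kappa$ since $k$ is algebraically closed, and \Cref{reduce-to-dvr-torsors} applies as stated. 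So your claim that the lemma ``cannot be invoked'' is a slight overstatement — it can after the shrinking reduction — but your direct component-by-component argument is also valid, and it has the modest advantage of establishing the full specialization condition for the unmodified $(\Spec(R),\alpha_R)$ rather than for a shrunk family, at the cost of repeating the bookkeeping from the proof of \Cref{reduce-to-dvr-torsors}.
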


\begin{proof}
Let $\tilde{\mc{P}}$ be the property of satisfying $\mc{P}$ generically; see \Cref{defin:property-torsors}(3). In view of \Cref{thm.main2}, it suffices to show that there exist:
\begin{itemize}
        \item[(i)] a Zariski dense open subscheme $U\subset S$ such that $\tilde{\mc{P}}$ satisfies the specialization condition with respect to $(U,\alpha_U)$, and
        \item[(ii)] a morphism $\varphi\colon \on{Spec}(R)\to S$, where $R$ is a discrete valuation ring, sending the closed point of $\on{Spec}(R)$ to $s_0$ and the generic point of $\on{Spec}(R)$ to a point of $U$, such that $\tilde{\mc{P}}$ satisfies the specialization condition with respect to $(\on{Spec}(R),\alpha_R)$.
\end{itemize}
    By assumption, there exists a dense open subscheme $U\subset S$ such that every geometric fiber of the restriction $X_U\to U$ is geometrically integral. By \Cref{reduce-to-dvr-torsors} and the assumption that $\mc{P}$ satisfies the specialization condition with respect to $G$-torsors over complete discrete valuation rings containing $k$, we deduce that property $\tilde{\mc{P}}$ satisfies the specialization condition with respect to $(U,\alpha_U)$, that is, (i) holds. 
    
    By \cite[Proposition 7.1.9]{ega2}, there exist a discrete valuation ring $R$ and a morphism $\varphi\colon \on{Spec}(R)\to S$ which sends the closed point of $\on{Spec}(R)$ to $s_0$ and the generic point of $\on{Spec}(R)$ to the generic point of $U$. We may assume that $R$ is complete. By assumption, there exists one irreducible component $X'\subset X_{s_0}$ such that the restriction of $E\to X_{s_0}$ to $X'$ does not satisfy $\mc{P}$ generically. We may remove from $X$ the union of the other irreducible components of $X_{s_0}$ (a closed subset of $X$) to assume that $X_{s_0}=X'$ is integral. By \Cref{reduce-to-dvr-torsors} applied to $S=\on{Spec}(R)$ and the specialization condition for $\mc{P}$ with respect to $G$-torsors over complete discrete valuation rings containing $k$, we conclude that $\tilde{\mc{P}}$ satisfies the specialization condition with respect to $(\on{Spec}(R),\alpha_R)$, that is, (ii) holds.
\end{proof}

\section{Torsors: reduction of structure to parabolic subgroups}
\label{sect.reduction-to-parabolic}

\begin{defin}\label{defin:reduction-of-structure}
    Let $k$ be a field, let $\varphi\colon H\to G$ be a homomorphism of $k$-groups, and let $K/k$ be a field extension. We say that a $G$-torsor $E\to \on{Spec}(K)$ \emph{admits reduction of structure to $H$} if one of the following equivalent conditions is satisfied:
    \begin{enumerate}
        \item the $G$-torsor $E$ is induced by some $H$-torsor over $K$ via the map $\varphi$;
        \item the class of $E$ in $H^1(K,G)$ is in the image of the map $\varphi_*\colon H^1(K,H)\to H^1(K,G)$.
        \item the morphism $\on{Spec}(K)\to B_kG$ which classifies $E$ admits a factorization of the form $\on{Spec}(K)\to B_kH\xrightarrow{B_k\varphi} B_kG$.
    \end{enumerate} 
    This defines a property of $G$-torsors over fields (cf. \Cref{defin:property-torsors}(1)). 
\end{defin}

Let $k$ be a field, let $G$ be a $k$-group, and let $H\to G$ be a homomorphism of $k$-groups. Let $S$ be a $k$-scheme of finite type, let $X$ be an $S$-scheme of finite type with geometrically reduced $S$-fibers, and let $\alpha\colon E \to X$ be a $G$-torsor. For every geometric point $s$ of $S$, one can ask whether the
$G$-torsor $\alpha_s\colon E_s \to X_s$ admits reduction of structure to $H$ generically. What is the locus of (images of) geometric points $s$ of $S$ such that $\alpha_s \colon E_s \to X_s$ has this property? In this section we will consider this question in a special case, where the answer can be deduced directly from \Cref{thm.rat-sections}.  

\begin{lemma}\label{lem-g-p-proper}
    Let $k$ be a field, let $G$ be a $k$-group such that $G^0$ is reductive, let $P$ be a $k$-subgroup of $G$ such that $P^0$ is a parabolic subgroup of $G^0$, let $V$ be a $k$-scheme of finite type, and let $E\to V$ be a $G$-torsor. Then the quotient $E/P$ is represented by a proper $V$-scheme.
\end{lemma}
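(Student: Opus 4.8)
The plan is to first prove that the $k$-scheme $G/P$ is projective, and then to realize $E/P$ as the associated bundle obtained by twisting $G/P$ by the torsor $E$.

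\emph{Step 1: $G/P$ is projective over $k$.} Since $G^0$ is reductive it is affine, and a $k$-group of finite type with affine identity component is affine; hence $G$ is affine over $k$. By Chevalley's theorem there exist a finite-dimensional $G$-representation $W$ and a $G$-equivariant locally closed immersion $\iota\colon G/P\hookrightarrow \P(W)$; in particular $G/P$ is a quasi-projective $k$-scheme. It remains to check that $G/P$ is proper over $k$. Since $P^0\subset P$, the natural surjection $G/P^0\to G/P$ is finite (its fibres are torsors under the finite group scheme $P/P^0$), so it suffices to prove that $G/P^0$ is proper. Properness may be verified after the faithfully flat base change $\Spec(\cl{k})\to\Spec(k)$, and over $\cl{k}$ the scheme $G/P^0$ becomes a disjoint union of finitely many copies of $(G^0/P^0)_{\cl{k}}$, one for each coset of $G^0_{\cl{k}}$ in $G_{\cl{k}}$; each copy is proper because $P^0$ is a parabolic subgroup of $G^0$. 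Hence $G/P^0$, and therefore $G/P$, is proper over $k$. A proper quasi-projective scheme is projective, so $\iota$ is in fact a closed immersion and $G/P$ is projective over $k$.

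\emph{Step 2: twisting by $E$.} Because $E\to V$ is an fppf $G$-torsor, the fppf quotient $E/P$ is identified with the associated bundle $E\times^G(G/P)$. Let $\mc{W}\coloneqq E\times^G W$ be the locally free $\mc{O}_V$-module of rank $\dim_k W$ associated to $E$ and the representation $W$; then $E\times^G\P(W)=\P(\mc{W})$, and the $G$-equivariant closed immersion $\iota$ of Step 1 induces a closed immersion $E/P=E\times^G(G/P)\hookrightarrow \P(\mc{W})$. All of these identifications can be checked fppf-locally on $V$, where the torsor becomes trivial and the statements reduce to their counterparts over $k$ established in Step 1. Thus $E/P$ is representable by a closed subscheme of $\P(\mc{W})$, and the structure morphism $E/P\to V$ is projective, in particular proper.

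The only delicate point is the properness of $G/P$ in Step 1: one must reduce it to the classical fact that $G^0/P^0$ is proper while accounting for the possibility that $G$ is disconnected (and non-smooth), which is why one passes to $\cl{k}$ and decomposes into cosets of $G^0$. Once $G/P$ is known to admit a $G$-equivariant closed embedding into a projective space, Step 2 is a routine twisting argument.
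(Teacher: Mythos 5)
Your proof is correct, but it takes a genuinely different route from the paper's. The paper handles the connected case of $P$ via the cocharacter description $P = P_G(\lambda)$ (from Conrad--Gabber--Prasad), then embeds $G \hookrightarrow \GL_n$ and compares $P$ with the parabolic $P_{\GL_n}(\lambda)$, obtaining a closed immersion $E/P \hookrightarrow E'/P_{\GL_n}(\lambda)$; this reduces to the case $G = \GL_n$, where the key advantage is that $\GL_n$-torsors are \emph{Zariski}-locally trivial, so one never needs to descend projectivity or representability along an fppf cover. The disconnected case is then deduced by quotienting $E/P^0$ by the finite group $P/P^0$, and the paper only claims properness (not projectivity) of $E/P$ for disconnected $P$. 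Your argument instead establishes directly that $G/P$ itself is projective over $k$, with a $G$-equivariant closed embedding $G/P \hookrightarrow \P(W)$ furnished by Chevalley's theorem (deducing properness of $G/P$ from properness of $G/P^0$ via the finite surjection $G/P^0 \to G/P$, and then upgrading the locally closed immersion to a closed immersion), and then twists by the torsor $E$ to get a closed embedding $E/P \hookrightarrow \P(\mathcal{W})$. This gives projectivity of $E/P \to V$ uniformly, which is slightly stronger than the paper's conclusion in the disconnected case; the trade-off is that Step 2 relies on fppf descent for vector bundles, closed immersions and representability, which the paper's reduction to $\GL_n$ avoids. Both proofs are valid; yours is arguably shorter because it bypasses the cocharacter bookkeeping and the two-step treatment of $P^0$ versus $P$, at the cost of invoking descent machinery that the paper deliberately sidesteps.
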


\begin{proof}
Suppose first that $P$ is connected. In this case, we will prove that $E/P$ is represented by a projective $V$-scheme. Then $P=P_G(\lambda)$ is the parabolic subgroup associated to some cocharacter $\lambda\colon \mathbb{G}_m\to G$; see \cite[p. 49 and Proposition 2.2.9]{conrad-gabber-prasad}. Fix a $k$-subgroup embedding $G\subset \on{GL}_n$ for some $n\geqslant 1$, and let $P_{\on{GL}_n}(\lambda)$ be the parabolic subgroup of $\on{GL}_n$ associated to $\lambda$, this time viewed as a cocharacter of $\on{GL}_n$. By \cite[Proposition 2.1.8(3)]{conrad-gabber-prasad}, we have $P_{\on{GL}_n}(\lambda) \cap G=P$. It follows that the induced proper morphism $G/P\to \on{GL}_n/P_{\on{GL}_n}(\lambda)$ is a monomorphism and hence a closed immersion; see \cite[Tag 04XV]{stacks-project}. We obtain a closed immersion $E/P\hookrightarrow E'/P_{\on{GL}_n}(\lambda)$, where $E'\to V$ is the $\on{GL}_{n}$-torsor induced by $E \to V$. We may thus assume that $G=\on{GL}_n$. In this case, since every $\on{GL}_n$-torsor is Zariski-locally trivial, by passing to a Zariski open cover of $S$ we may suppose that $E \to V$ is trivial, so that $E/P\cong (\on{GL}_n/P_{\on{GL}_n}(\lambda))\times_kV$ is a projective $V$-scheme. This proves that $E/P$ is a projective $V$-scheme when $P$ is connected.

Now let $P$ be arbitrary, and consider the finite \'etale $k$-group $\Gamma\coloneqq P/P^0$. Since properness is Zariski local on the target, in order to prove that $E/P$ is representable by a proper $V$-scheme we may assume that $V$ is affine. By the first part of the proof, $E/P^0$ is projective over $V$, and hence it is quasi-projective over $k$. We deduce that $E/P\cong (E/P^0)/\Gamma$ is representable by a (quasi-projective) scheme over $k$. The morphism $E/P\to V$ is proper because so is $E/P^0\to V$.
\end{proof}

\begin{prop} \label{prop.parabolic} Let $k$ be a field, let $G$ be a $k$-group whose identity component $G^0$ is reductive, let $P\subset G$ be a $k$-subgroup such that $P^0$ is a parabolic subgroup of $G^0$, let $S$ be a $k$-scheme of finite type, and let $\alpha\colon E \to X$ be a $G$-torsor of $S$-schemes of finite type. Assume that $X$ is $S$-flat and with geometrically reduced $S$-fibers. Then there exists a countable collection 
of closed subschemes $\{S_j\}_{j\geqslant 1}$ of $S$ such that for every geometric point  
$s$ of $S$ the following are equivalent:
\begin{itemize}
    \item[(i)] the $G$-torsor $\alpha_s\colon E_s \to X_s$ admits reduction of structure to $P$ generically;
    \item[(ii)] $s$ lies in $S_j$ for some $j \geqslant 1$.
\end{itemize}
\end{prop}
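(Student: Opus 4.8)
The plan is to deduce \Cref{prop.parabolic} from \Cref{thm.rat-sections} by converting the property ``$\alpha_s$ admits reduction of structure to $P$ generically'' into the property ``a suitable proper morphism $f_s$ has a rational section''.

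First I would set $Z \coloneqq E/P$. By \Cref{lem-g-p-proper}, applied to the $G$-torsor $\alpha\colon E \to X$, the quotient $Z$ is represented by a scheme and the structure morphism $f\colon Z \to X$ is proper. The formation of $E/P$ commutes with base change in $S$: as a quotient stack one has $[E/P]\times_S S' = [E_{S'}/P_{S'}]$, and equivalently $Z = E\times^G(G/P)$ is a contracted product, which commutes with arbitrary base change. Hence for every geometric point $s$ of $S$ one has $Z_s = E_s/P$, a proper $X_s$-scheme, with structure morphism $f_s\colon Z_s \to X_s$.

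Next I would record the torsor-theoretic dictionary over a field. Let $\Omega\supseteq k$ be algebraically closed and let $T \to \on{Spec}(K)$ be a $G$-torsor over an extension $K/\Omega$. Then $T$ admits reduction of structure to $P$ if and only if the proper $K$-scheme $T/P = T\times^G(G/P)$ has a $K$-point: a reduction of $T$ to a $P$-torsor $F$ over $\on{Spec}(K)$ identifies $T/P$ with $F\times^P(G/P)$, which has the section induced by the $P$-fixed point $eP\in G/P$ (noting $F/P = \on{Spec}(K)$), while conversely a $K$-point $\sigma$ of $T/P$ pulls back the $P$-torsor $T \to T/P$ to a $P$-torsor over $\on{Spec}(K)$ inducing $T$ through $P \to G$. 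Combining this with the remark following \Cref{defin:rational-section}, which says that a morphism to a reduced finite-type scheme has a rational section precisely when each generic point of the target lifts, and with \Cref{defin:property-torsors}(3), I obtain: for every geometric point $s$ of $S$, the $G$-torsor $\alpha_s\colon E_s \to X_s$ admits reduction of structure to $P$ generically if and only if $f_s\colon Z_s \to X_s$ has a rational section. Here I use that $X_s$ is reduced, which holds because the $S$-fibers of $X$ are geometrically reduced and $k(s)$ is algebraically closed.

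Finally I would apply \Cref{thm.rat-sections} with the flat finite-type morphism $X \to S$ in the role of ``$Y \to S$'' and the proper morphism $f\colon Z \to X$ in the role of ``$f\colon X \to Y$''. This produces a countable collection $\{S_j\}_{j\geqslant 1}$ of closed subschemes of $S$ such that, for every geometric point $s$ of $S$, the morphism $f_s$ has a rational section if and only if $s$ lies in $S_j$ for some $j\geqslant 1$; by the previous paragraph this is exactly the equivalence of (i) and (ii). The only steps with genuine content are the two compatibilities used above — that $E/P$ commutes with base change, so that $Z_s = E_s/P$, and the equivalence between $K$-rational points of $T/P$ and reductions of structure of $T$ to $P$ — both standard facts about torsors and contracted products; everything else is formal, given \Cref{lem-g-p-proper} and \Cref{thm.rat-sections}.
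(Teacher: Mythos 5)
Your proposal is correct and follows essentially the same route as the paper: form $Z = E/P$, use \Cref{lem-g-p-proper} to see $Z \to X$ is proper, translate ``reduction of structure to $P$ generically'' into ``rational section of $Z_s \to X_s$'', and apply \Cref{thm.rat-sections}. The only difference is cosmetic: where you spell out the bijection between $K$-points of $T/P$ and reductions of $T$ to $P$ via the contracted product $T/P = F\times^P(G/P)$, the paper simply cites Serre's \emph{Galois Cohomology} (Proposition I.5.4.37) for this dictionary.
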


\begin{proof} 
By \Cref{lem-g-p-proper}, the map $f\colon E/P\to X$ induced by $\alpha$ is a proper morphism of schemes. Moreover, by \cite[Proposition I.5.4.37]{serre1997galois}, for every
geometric point $s$ of $S$, the $G$-torsor $\alpha_s \colon E_s\to X_s$ admits reduction of structure to $H$ generically if and only if the proper morphism $f_s \colon (E/P)_s = E_s/P_s \to X_s$ admits a rational section. The conclusion now follows from~\Cref{thm.rat-sections}.
\end{proof}

\begin{rmk} \label{rem.parabolic}
Example~\ref{ex.quadric} with $m = 0$ is a special case of Proposition~\ref{prop.parabolic}. Indeed, a quadric bundle $Q \to Y$ of rank $n$ is the same thing as a $\operatorname{O}_n$-torsor $E \to Y$, 
where $\operatorname{O}_n$ is the group of isometries of a split $n$-dimensional quadratic form 
over $k$. A fiber $Q_s \to Y_s$ over a geometric point $s \colon \Spec(k) \to S$ has generic Witt index $\geqslant d$ if and only if the $\operatorname{O}_n$-torsor $E_s \to Y_s$ admits reduction of structure generically to a parabolic subgroup $P_d$ of $\operatorname{O}_n$, where $P_d$ is the stabilizer of an $d$-dimensional isotropic subspace for the split quadratic form in a $n$-dimensional vector space. The isotropic Grassmannian 
$\Gr^{\rm iso}(d, Q)$ is isomorphic to $E/P_d$ over $Y$ or equivalently, the homogeneous space $\operatorname{O}_n/P_d$ twisted by $E$.

Similarly, Example~\ref{ex.azumaya} follows from
Proposition~\ref{prop.parabolic} with $G = \PGL_n$. Indeed,
an Azumaya algebras $\mc{A}$ of degree $n$ over $Y$ is the same thing as a $\PGL_n$ torsor $E \to Y$. A fiber $\mathcal{A}_s$ over a geometric point $s$ of $S$ has generic index dividing $d$ if and only if $E_s \to Y_s$ admits reduction of structure to a suitable parabolic subgroup $P^{(d)}$ of $\PGL_n$ generically. Once again, the Severi-Brauer scheme $\SB(d, \mathcal{A})$ can be identified with $E/P^{(d)}$ over $Y$.   
\end{rmk}

\section{A geometric variant of the Hilbert Irreducibility Theorem}
\label{sect.hilbert-irreducibility} 

Let $k$ be an algebraically closed field and $K$ be a finitely generated field extension of $k$. Consider a polynomial 
\begin{equation} \label{e.hilbert-irreducibility}
f_t(x) = x^n + f_1(t_1, \ldots, t_m) x^{n-1} + \ldots + f_n(t_1, \ldots, t_m), 
\end{equation}
where $f_1, \ldots, f_n \in K[t_1, \ldots, t_m]$.
Let $\Delta(t_1, \ldots, t_m) \in K[t_1, \ldots, t_m]$ be the discriminant of
$f_t(x)$ (viewed as a polynomial in $x$), and $U$
be the open subvariety of $\mathbb A^n$, where $\Delta(t_1, \ldots, t_m) \neq 0$. Assume that $U \neq \emptyset$ or equivalently, $U$ is dense in $\mathbb A^m$.
For every $k$-point $t = (t_1, \ldots, t_n) \in U(k)$, we can ask whether the polynomial $f_t(x)$ is irreducible over $K$.

Note that $f_t(x)$ has no multiple roots for any $t \in U$.
Hence, for every $t \in U$, it makes sense to talk about the Galois group of $f_t(x)$ over $K$. This group, which we will denote by $\Gal(f_t)$, faithfully acts on the roots of $f_t(x)$; an ordering of the roots of $f_t(x)$ determines an embedding of $\Gal(f_t)$ in $\Sym_n$. A different ordering of the roots gives rise to an equivalent embedding, i.e., the same embedding followed by an inner automorphism of $\Sym_n$. 
Note that $f_t(x)$ is irreducible if and only if the natural action of $\Gal(f_t) \subset \Sym_n$ on $\{ 1, \ldots, n \}$ is transitive.

\smallskip
\begin{prop} \label{prop.galois-group} Let $H$ be a finite subgroup of $\Sym_n$.
Then there exists a countable collection of closed subvarieties $\{S_j\}_{j\geqslant 1}$ of $\mathbb A^m$ such that a geometric point $t = (t_1, \ldots, t_m) \in U(k)$  lies in $S_j$ for some $j \geqslant 1$ if and only if $\Gal(f_t)$ is conjugate to a subgroup of $H$.
\end{prop}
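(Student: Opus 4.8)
The plan is to reinterpret the condition ``$\Gal(f_t)$ is conjugate to a subgroup of $H$'' as a reduction of structure to $H$ for a suitable family of $\Sym_n$-torsors, and then to invoke \Cref{prop.parabolic} with $G=(\Sym_n)_k$. This is legitimate: $G$ is finite, so $G^0$ is the trivial group --- in particular reductive --- and the $k$-subgroup $P:=H$ has $P^0$ trivial, hence a parabolic subgroup of $G^0$. (Alternatively, one can bypass \Cref{prop.parabolic} and apply \Cref{thm.rat-sections} directly to the proper $Y$-scheme $E/H$ constructed below; the bookkeeping is the same.)

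First I would pass to a geometric model of $K$: since $K/k$ is finitely generated, choose an integral affine $k$-variety $B$ with $k(B)=K$, and, after shrinking $B$, arrange that $f_1,\dots,f_n$ and the discriminant $\Delta$ all lie in $\mc{O}(B)[t_1,\dots,t_m]$. Put $S:=\A^m_k$, let $Y\subseteq B\times_k\A^m_k$ be the open subscheme where $\Delta\ne 0$, and let $Y\to S$ be the second projection; this is a flat morphism of finite type with geometrically reduced fibres, and for $t\in S(k)$ the fibre $Y_t$ is the open subscheme $\{\Delta(\,\cdot\,,t)\ne 0\}$ of $B$, which is empty when $t\notin U$ and a nonempty integral $k$-scheme with function field $K$ when $t\in U(k)$. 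Over $Y$ the monic polynomial $F:=x^n+f_1x^{n-1}+\dots+f_n$ has invertible discriminant, so $\mc{A}:=\mc{O}_Y[x]/(F)$ is a finite \'etale $\mc{O}_Y$-algebra of degree $n$, and I let $E:=\operatorname{Isom}_Y(\underline{n}_Y,\Spec_Y\mc{A})$ be the associated $\Sym_n$-torsor over $Y$, where $\underline{n}_Y$ is the constant $Y$-scheme on $n$ points. Applying \Cref{prop.parabolic} to $G=(\Sym_n)_k$, $P=H$, $S$, and $\alpha\colon E\to Y$ then produces closed subschemes $\{S_j\}_{j\ge 1}$ of $\A^m_k$ such that a geometric point $s$ of $S$ lies in some $S_j$ if and only if $E_s\to Y_s$ admits reduction of structure to $H$ generically; replacing each $S_j$ by the irreducible components of its reduction makes these subvarieties without changing their sets of $k$-points.

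It then remains to match the two conditions for $t\in U(k)$. Since $Y_t$ is integral with function field $K$, \Cref{defin:property-torsors}(3) says that $E_t\to Y_t$ admits reduction of structure to $H$ generically exactly when its restriction to the generic point $\Spec(K)$ of $Y_t$ does. Because the formation of $\operatorname{Isom}$-torsors of finite \'etale algebras commutes with base change and $F$ restricts to $f_t$ at that generic point, this restriction is the $\Sym_n$-torsor $\operatorname{Isom}_K(\underline{n},\Spec(K[x]/f_t))$, whose class in $H^1(K,\Sym_n)$ corresponds to the \'etale algebra $K[x]/f_t$, i.e.\ to the permutation action of the absolute Galois group of $K$ on the $n$ roots of $f_t$, an action which factors through $\Gal(f_t)\subseteq\Sym_n$. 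The last ingredient is the classical fact that such a class lies in the image of $H^1(K,H)\to H^1(K,\Sym_n)$ if and only if this homomorphism to $\Sym_n$ factors, up to conjugacy, through $H$ --- equivalently, if and only if the twisted form of $\Sym_n/H$ attached to $f_t$ has a $K$-point, equivalently if and only if $\Gal(f_t)$ is conjugate in $\Sym_n$ to a subgroup of $H$. Combining these, for $t\in U(k)$ the point $t$ lies in some $S_j$ if and only if $\Gal(f_t)$ is conjugate to a subgroup of $H$, which is the assertion. I expect the only genuinely delicate point to be this final Galois-theoretic translation, with its left/right-coset bookkeeping, together with the base-change compatibility of the $\operatorname{Isom}$-torsor; the remaining steps are routine and reduce the statement to a single application of \Cref{prop.parabolic}.
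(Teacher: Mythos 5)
Your proof is correct and follows essentially the same route as the paper's: realize the family $f_t$ as a degree-$n$ finite \'etale cover, hence a $\Sym_n$-torsor over a flat family $Y\to S$, note that every subgroup of a finite group qualifies as ``parabolic'' in the sense of \Cref{prop.parabolic}, and translate reduction of structure to $H$ at the generic point into the condition that $\Gal(f_t)$ is conjugate into $H$. The only (harmless) deviations are that you take $S=\A^m$ with possibly empty fibres over $\A^m\setminus U$ rather than $S=U$ followed by taking closures, and that you are slightly more careful than the paper in restricting to the locus where the discriminant is invertible so that the cover is genuinely \'etale.
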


\begin{proof}
Let $K$ be the function field of an irreducible affine variety $Z$. After replacing $Z$ by a suitable affine subvariety, we may assume that the coefficients $f_i(t_1, \ldots, t_m)$ are regular functions on $Z$ for every $i = 1, \ldots, n$. Let $S = U$, $Y = Z \times U$, and $X \subset Y \times \mathbb A^1$ be the zero locus of $f_t(x)$. Here $t = (t_1, \ldots, t_m)$ ranges over $U$ and $x$ ranges over $\mathbb A^1$.
By the definition of $U$, the natural projection $\pi \colon X \to Y$ is an \'etale morphism of degree $n$. Hence, it is represented by a class in $H^1(Y, \Sym_n)$ or equivalently, by a $\Sym_n$-torsor $\tau \colon E \to Y$. Moreover, for a geometric point $t \colon \Spec(k) \to S = U$, the Galois group $\Gal(f_t)$ is contained in $H$ (up to conjugacy in $\Sym_n$) if and only if the $\Sym_n$-torsor $E_t \to Y_t$ admits reduction of structure to $H$ over the generic point of $Y_t \simeq Z$.
The existence of $\{S_j\}_{j\geqslant 1}$ now follows from
\Cref{prop.parabolic}, with $G = \Sym_n$. Note that since $\Sym_n$ is a finite group, every subgroup $H$ is parabolic.
\end{proof}

\begin{prop} \label{prop.irreducible} There is a countable collection of closed subvarieties $S_j$ of $\mathbb{A}^m$
such that a geometric point $t = (t_1, \ldots, t_m) \in \mathbb{A}^m(k)$ 
lies in one of these subvarieties if and only if $f_t(t)$ is reducible over $K$.
\end{prop}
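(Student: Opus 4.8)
The plan is to deduce \Cref{prop.irreducible} from \Cref{thm.rat-sections}, applied to a family of \emph{factorization schemes}, in the same spirit as the deduction of \Cref{prop.galois-group} from \Cref{prop.parabolic} but without any separability restriction (so that the locus where $\Delta(t) = 0$ is treated on equal footing with the rest). The starting point is the elementary observation that, since $f_t(x)$ is monic of degree $n$, it is reducible over $K$ if and only if it admits a monic divisor of degree $j$ in $K[x]$ for some $j$ with $1 \leqslant j \leqslant \lfloor n/2 \rfloor$: a nontrivial factorization can be normalized so that one factor is monic, and a monic factor of degree $j' > n/2$ has a complementary monic factor of degree $n - j' \leqslant \lfloor n/2 \rfloor$.

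Concretely, I would first choose an irreducible affine $k$-variety $Z$ with function field $K$ and shrink it so that every coefficient of $f_1, \dots, f_n \in K[t_1, \dots, t_m]$ is a regular function on $Z$; since $k$ is algebraically closed, $Z$ is automatically geometrically integral. I then set $S \coloneqq \A^m$ and $Y \coloneqq Z \times_k \A^m$, viewed as an $S$-scheme via the second projection; this $Y \to S$ is flat and of finite type, and the fiber over each geometric point $t \colon \Spec(k) \to S$ is $Z$. For each $j$ with $1 \leqslant j \leqslant \lfloor n/2 \rfloor$, I let $X_j \subset \A^j \times_k Y$ — with coordinates $a_1, \dots, a_j$ on $\A^j$ — be the closed subscheme defined by the condition that $g(x) \coloneqq x^j + a_1 x^{j-1} + \cdots + a_j$ divide $f_t(x) = x^n + f_1 x^{n-1} + \cdots + f_n$, i.e.\ by the vanishing of the remainder (of degree less than $j$) in the polynomial division of $f_t$ by $g$; this is a system of $j$ equations in $a_1, \dots, a_j$ with coefficients in $\mathcal{O}(Y)$. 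Finally I put $X \coloneqq \coprod_{j = 1}^{\lfloor n/2 \rfloor} X_j$, with the induced morphism $X \to Y$.

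The key geometric input, which I expect to be the main obstacle, is that $X \to Y$ is a \emph{finite} morphism, hence proper; this is exactly what makes \Cref{thm.rat-sections} applicable. The point is that on each $X_j$ one has a factorization $f_t = q \cdot g$ with $q$ monic, so after adjoining the roots $\theta_1, \dots, \theta_j$ of $g$ (a finite free extension of $\mathcal{O}(X_j)$), each $\theta_i$ satisfies $f_t(\theta_i) = 0$, a monic equation over $\mathcal{O}(Y)$, and is therefore integral over $\mathcal{O}(Y)$; since $a_\ell = \pm e_\ell(\theta_1, \dots, \theta_j)$ is a polynomial in the $\theta_i$, it too is integral over $\mathcal{O}(Y)$, and as $\mathcal{O}(X_j)$ is generated over $\mathcal{O}(Y)$ by the $a_\ell$, it is a finite $\mathcal{O}(Y)$-module. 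Carrying this out carefully — in particular keeping track of the (possibly non-reduced) scheme structure of the $X_j$ and of their fibers — is the only genuinely technical step; the rest is formal.

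Granting this, I would conclude as follows. For a geometric point $t \colon \Spec(k) \to S = \A^m$, the fiber $Y_t = Z$ is integral with function field $K$, so by the characterization of rational sections over a reduced base in terms of the generic points (the remark following \Cref{defin:rational-section}), the morphism $X_t \to Y_t = Z$ admits a rational section if and only if the generic point $\Spec(K) \to Z$ lifts to $X_t$. Since $\Spec(K)$ is connected, such a lift factors through some $(X_j)_t$, and a $Z$-morphism $\Spec(K) \to (X_j)_t$ is precisely a monic divisor of $f_t(x)$ of degree $j$ in $K[x]$. Hence $X_t \to Z$ has a rational section if and only if $f_t(x)$ is reducible over $K$. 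Applying \Cref{thm.rat-sections} to $S$, the flat morphism $Y \to S$, and the proper morphism $X \to Y$ produces a countable family of closed subschemes of $\A^m$; restricting attention to $k$-points and passing to reduced structures yields the closed subvarieties $S_j \subset \A^m$ with the required property.
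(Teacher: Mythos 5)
Your proposal is correct, but it follows a genuinely different route from the paper's. The paper argues in two steps: on the discriminant locus it declares $f_t$ reducible outright (because of a multiple root), and on the complement $U$ it invokes \Cref{prop.galois-group} with the subgroups $H_i = \Sym_i \times \Sym_{n-i}$ --- i.e.\ reduction of structure of the associated $\Sym_n$-torsor to these subgroups, via \Cref{prop.parabolic} --- and then takes Zariski closures in $\A^m$ of the resulting countable families. You instead apply \Cref{thm.rat-sections} once, directly, to the $Y$-scheme of monic degree-$j$ divisors of $f_t$ (union over $1 \leqslant j \leqslant \lfloor n/2\rfloor$); the one nontrivial point, which you correctly identify and correctly settle via the universal splitting algebra of $g$ (each root $\theta_i$ satisfies the monic equation $f_t(\theta_i)=0$ over $\mathcal{O}(Y)$, hence the coefficients $a_\ell = \pm e_\ell(\theta_1,\dots,\theta_j)$ are integral, and $\mathcal{O}(X_j)$ is module-finite over $\mathcal{O}(Y)$), is that this scheme is finite, hence proper, over $Y$. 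Your approach buys uniformity (no case split at the discriminant and no closure-taking at the end) and avoids the Galois-theoretic machinery entirely; it is also insensitive to inseparability, whereas the paper's case (i) --- ``multiple root, hence reducible'' --- is delicate in characteristic $p$, where $f_t'$ may vanish identically (e.g.\ $x^p-a$ has zero discriminant but can be irreducible), so your argument is actually more robust there. What the paper's route buys is brevity: given \Cref{prop.galois-group}, which is proved anyway and is of independent interest, the proposition is essentially a two-line corollary.
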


\begin{proof} 
Note that $f_t(x)$ is reducible over $K$ in one of two cases: 

\smallskip
(i) $f_t(x)$ has a multiple root, i.e., $t$ lies in the discriminant locus $S_0 = \mathbb A^m \setminus U$ given by $\Delta(t) = 0$, or

\smallskip
(ii) $t$ lies in $U$ and the Galois group $\Gal(f_t)$  is conjugate to a subgroup of
$H_i = \Sym_i \times \Sym_{n-i} \subset \Sym_n$ for some $i = 1, \ldots, \lfloor n/2\rfloor$. More specifically, the Galois group of $f_t(x)$ is contained 
in $H_i$ (up to conjugacy) if and only if $f_t(x)$ decomposes as a product of polynomials of degree $i$ and $n-i$ with coefficients in $K$. By~\Cref{prop.galois-group} there is a countable collection of closed subschemes $S_{H_i}[j] \subset U$, where $j$ ranges over the non-negative integers with the following property: for $t \in U(k)$, the Galois group of $f_t(x)$ is contained in $H_i$ if and only if $t$ lies in $S_{H_i}[j]$ for some $j \geqslant 1$. Let $S_i[j]$ be the 
Zariski closure of $S_{H_i}[j]$ in $\mathbb A^m$.

\smallskip
In summary, the countable collection of closed subvarieties of $\mathbb A^m$ consisting of $S_1 = \mathbb{A}^m \setminus U$ and $S_i[j]$ ($i = 1, \ldots, \lfloor n/2\rfloor$ and $j = 1, 2 \ldots$) has the required properties.
\end{proof}

\section{Torsors: reduction of structure to general subgroups}
\label{sect.reduction-to-general}

Let $k$ be a field, and let $H \to G$ be a $k$-group homomorphism. 
In this section we investigate how the property of admitting reduction of structure to $H$ behaves in a family of $G$-torsors $E \to X$ parametrized by a scheme $S$. In~\Cref{prop.parabolic} we showed that if $G^0$ is reductive and $H$ is a $k$-subgroup of $G$ such that $H^0$ is a parabolic subgroup of $G^0$, then there there exists a countable union of closed subschemes $S_j$ ($j = 1, 2, \ldots$) of $S$ such that the fiber $E_s \to X_s$ over a geometric point $s \in S$ admits reduction of structure to $H$ if and only if $s$ lies in $S_j$ for some $j \geqslant 1$.  If we do not assume that $H$ contains a parabolic subgroup of $G$, then $X_s/H$ may not be proper over $Y$, and hence \Cref{thm.rat-sections} cannot be applied to the morphism $X/H\to Y$.  Nevertheless, in this section we will show that \Cref{prop.parabolic} remains valid in this more general setting, under a mild assumption on the characteristic of $k$.

\begin{defin} \label{def.good-characteristic}
Let $k$ be an algebraically closed field, and let $G$ be an affine $k$-group. We say that $G$ is in \emph{good characteristic} if there exists a finite discrete subgroup $\Lambda \subset G$, of order invertible in $k$, such that for every field extension $K/k$ the map $H^1(K, \Lambda)\to H^1(K,G)$ is surjective. 
\end{defin}

Note that by~\cite[Theorem 1.1]{cgr}, if $\on{char}(k)=0$, then every affine $k$-group is in good characteristic. We are now ready to state the main result of this section.

\begin{thm}\label{thm.reduction-of-structure} Let $k$ be a field, let $G$ and $H$ be smooth affine $k$-groups, let $H\to G$ be a $k$-group homomorphism, let $S$ be a $k$-scheme of finite type, let $X\to S$ be a flat morphism of finite type with geometrically integral fibers, and let $\alpha\colon E \to X$ be a $G$-torsor. Assume that $H$ is in good characteristic. Then there exists a countable collection of closed subschemes $\{S_j\}_{j\geqslant 1}$ of $S$ 
such that for every geometric point $s$ of $S$ the following are equivalent:
\begin{itemize}
    \item[(i)] the $G$-torsor $\alpha_s\colon E_s \to X_s$ admits reduction of structure to $H$ generically;
    \item[(ii)] $s$ lies in $S_j$ for some $j \geqslant 1$.
\end{itemize}
\end{thm}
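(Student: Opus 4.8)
The plan is to deduce the statement from \Cref{thm.main.torsors}, applied to the $G$-torsor $\alpha\colon E\to X$ with $\mc{P}$ the property of $G$-torsors over varieties of \emph{admitting reduction of structure to $H$ generically} (cf. \Cref{defin:property-torsors}(3) and \Cref{defin:reduction-of-structure}). By that theorem it suffices to show that $\mc{P}$ satisfies the rigidity condition and the specialization condition with respect to $(S,\alpha)$.

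The first step, and the one in which the good characteristic hypothesis enters, is to reduce to the case where $H$ is finite. Applying \Cref{def.good-characteristic} to $H_{\overline{k}}$, fix a finite discrete subgroup $\Lambda\subset H_{\overline{k}}$ of order invertible in $k$ such that $H^1(F,\Lambda)\to H^1(F,H)$ is surjective for every field extension $F$ of $\overline{k}$. I claim that for every algebraically closed field $k'$ containing $k$ and every field extension $F/k'$, a $G$-torsor over $F$ admits reduction of structure to $H$ if and only if it admits reduction of structure to $\Lambda$ along the composite $\Lambda\hookrightarrow H\xrightarrow{\varphi}G$. The implication ``if'' is clear; for ``only if'', if the class of the torsor in $H^1(F,G)$ lies in the image of $H^1(F,H)$, then by the surjectivity above (applied with $\overline{k}$ the algebraic closure of $k$ inside $k'$) it already lies in the image of $H^1(F,\Lambda)$. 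Hence the property of admitting reduction of structure to $H$ coincides, as a property of $G$-torsors over fields, with that of admitting reduction of structure to $\Lambda$, and from now on I assume $H=\Lambda$ is finite of order invertible in $k$.

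With $H=\Lambda$ finite, the rigidity condition is a spreading-out argument modelled on the proof of \Cref{rationality-properties-rigidity}. Working one irreducible component at a time, one reduces to showing: if $k''/k'$ is an extension of algebraically closed fields containing $k$ and $E\to V$ is a $G$-torsor over an integral $k'$-variety $V$ such that $E_{k''(V)}$ admits reduction of structure to $\Lambda$, then so does $E_{k'(V)}$. A reduction consists of a $\Lambda$-torsor over $k''(V)$ and an isomorphism of the induced $G$-torsor with $E_{k''(V)}$; since $\Lambda$ and $G$ are of finite type this datum is defined over $L(V)$ for some finitely generated subextension $k'\subset L\subset k''$, and spreading it out over a $k'$-variety with function field $L$ and specializing at a $k'$-point yields the required reduction of $E_{k'(V)}$. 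The reverse implication is immediate from base change.

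It remains to verify the specialization condition, which is the core of the proof. By \Cref{reduce-to-dvr-torsors} — applicable because $X\to S$ is flat of finite type with geometrically integral fibers (which is even faithfully flat, the fibers being non-empty) — it is enough to treat $G$-torsors over complete discrete valuation rings containing $k$; concretely, one must prove that if $A$ is such a ring, with fraction field $K$ and residue field $\kappa$, and $T\to\on{Spec}(A)$ is a $G$-torsor whose restriction $T_K$ admits reduction of structure to $\Lambda$, then $T_\kappa$ admits reduction of structure to $\Lambda$; here $|\Lambda|$ is invertible in $A$. The plan for this step is to fix a reduction $T_K\cong\varphi_*(P_K)$ with $P_K$ a $\Lambda$-torsor over $K$, and to use that — because $|\Lambda|$ is invertible in $A$ — the finite cover $\on{Spec}(B)\to\on{Spec}(A)$, with $B$ the integral closure of $A$ in the finite étale $K$-algebra underlying $P_K$, is at worst tamely ramified and carries a compatible $\Lambda$-action. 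Combining Abhyankar's lemma on tame ramification, the equivalence between finite étale covers of a complete discrete valuation ring and of its residue field, and the smoothness of $G$ (which, via Hensel's lemma, transports triviality of the pulled-back torsor $T_B$ across the closed point of $\on{Spec}(B)$), one should extract from $P_K$ a $\Lambda$-torsor over $\kappa$ realizing a reduction of structure of $T_\kappa$. I expect this analysis of the tame ramification and its specialization to be the main obstacle; the earlier reductions are formal, and once the DVR statement above is in place, \Cref{thm.main.torsors} furnishes the countable family $\{S_j\}_{j\geqslant 1}$ satisfying (i) $\Leftrightarrow$ (ii).
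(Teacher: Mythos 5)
Your overall architecture matches the paper's: reduce to a finite constant group $\Lambda$ of invertible order using the good-characteristic hypothesis, prove rigidity by a spreading-out argument (this is \Cref{rigidity.reduction-of-structure}), and reduce the specialization condition via \Cref{reduce-to-dvr-torsors} to $G$-torsors over a complete discrete valuation ring $A$, where tameness of the $\Lambda$-cover is exploited (the paper does this through the tower $K_\infty=\cup K(\pi^{1/n})$ rather than through Abhyankar's lemma applied to $B/A$, but these are the same mechanism). The tame-ramification part of your plan does produce, after a totally ramified extension $A'$ of $A$ with the same residue field $\kappa$, a $\Lambda$-torsor $P_{A'}$ over $A'$ whose generic fiber induces $T_{K'}$; and the equivalence of finite \'etale covers of $A'$ and of $\kappa$ then yields a $\Lambda$-torsor over $\kappa$. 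Up to this point you are on track.

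The gap is in the last step, and it is exactly the hard point of \Cref{specialization.reduction-of-structure}. Having the $G$-torsors $\varphi_*(P_{A'})$ and $T_{A'}$ isomorphic over the \emph{generic} point $\on{Spec}(K')$, you must conclude that they are isomorphic over the \emph{closed} point $\on{Spec}(\kappa)$; equivalently, you need the injectivity of $H^1(A',G)\to H^1(K',G)$. Smoothness of $G$ and Hensel's lemma give the bijectivity of $H^1(A',G)\to H^1(\kappa,G)$ (lifting from the closed fiber to $A'$), but they provide no way to transport information from the generic fiber to the closed fiber: the isomorphism scheme $\on{Isom}_G(\varphi_*(P_{A'}),T_{A'})$ is a torsor under a smooth affine group scheme over $A'$, and knowing it has a $K'$-point does not, by Hensel alone, give it an $A'$-point. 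This injectivity is a Grothendieck--Serre-type statement; the paper proves it by quoting \cite[Lemma 3.5(b)]{reichstein2022behavior} when $G^0$ is reductive and then running a twisting argument with the ($k$-split) unipotent radical $U$ of $G^0$, using \cite[Theorem B.3.4]{conrad-gabber-prasad} to kill $H^1(\kappa,\prescript{}{\tau}{U})$, to handle general smooth affine $G$. Your proposal has no counterpart for this ingredient (note the theorem imposes no hypothesis on $G$ beyond smoothness and affineness, so the reductive case alone would not suffice), and the same issue already blocks the intermediate assertion that triviality of $T_L$ over the generic fiber of $\on{Spec}(B)$ implies triviality over its closed fiber. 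Supplying this injectivity statement is what is needed to complete the argument.
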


Note that here we assume only that $H$ is in good characteristic. There is no characteristic assumption on $G$.
Our proof of \Cref{thm.reduction-of-structure} will rely on the following.

\begin{lemma}\label{rigidity.reduction-of-structure}
    Let $k$ be an algebraically closed field, let $K/k$ and $k'/k$ be field extensions, and let $K'$ be the fraction field of the domain $K\otimes_kk'$. Let $H\to G$ be a $k$-group homomorphism, and let $E\to \on{Spec}(K)$ be a $G$-torsor. Then $E$ admits reduction of structure to $H$ if and only if $E_{K'}\to \on{Spec}(K')$ admits reduction of structure to $H_{K'}$.
\end{lemma}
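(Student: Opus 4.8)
The plan is to prove both implications directly, using the characterization of reduction of structure in terms of the image of $\varphi_*\colon H^1(-,H)\to H^1(-,G)$ from \Cref{defin:reduction-of-structure}. The forward direction is essentially formal: if the class $[E]\in H^1(K,G)$ lies in the image of $H^1(K,H)$, then after base change along $K\hookrightarrow K'$ the class $[E_{K'}]$ lies in the image of $H^1(K',H_{K'})$, since cohomology is functorial in the field. So the content is entirely in the converse. First I would set up the geometric picture: choose a $k$-variety $V$ with $k(V)=K$ (so $K\otimes_k k'$ being a domain reflects that $V_{k'}$ is irreducible, which holds since $k$ is algebraically closed), and note $K'=k'(V_{k'})=k'(V)$. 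After shrinking $V$, we may assume $E$ spreads out to a $G$-torsor $E_V\to V$, so that $E$ is the generic fiber of a family of $G$-torsors over $V$.

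The key idea for the converse is to apply the machinery already developed in the paper — specifically \Cref{thm.main.torsors} — to the family $E_V\to V$ over the base $S=V$, with the trivial structure morphism $V\to V$ playing the role of $X\to S$ (i.e. $X=V$, so that ``satisfying $\mc{P}$ generically over the fiber $X_s$'' just means ``the torsor over $k(s)$ satisfies $\mc{P}$''). Take $\mc{P}$ to be the property of $G$-torsors over fields of admitting reduction of structure to $H$; this satisfies the specialization condition because for a discrete valuation ring $A\supset k$ and a $G$-torsor over $\Spec(A)$, if the closed fiber admits reduction of structure to $H$ then... — wait, that is the wrong direction. Instead I would argue as follows. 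The set $\Lambda\subset V$ of points $v$ such that $(E_V)_v\to\Spec(k(v))$ admits reduction of structure to $H$ contains the $k$-point corresponding to a suitable choice only if we already know the generic fiber reduces; what we actually know is that $(E_V)_{K'}=(E_{V_{k'}})_{\eta}$ reduces, where $\eta$ is the generic point of $V_{k'}$. So the correct tool is a descent/spreading argument analogous to the proof of \Cref{prop:properties-grs}(3)$\Rightarrow$(1): a reduction of structure of $E_{K'}$ to $H_{K'}$ is given by an $H_{K'}$-torsor, which spreads out to an $H$-torsor over a dense open of $V_{k'}$ inducing $E_V$ there; since $k$ is algebraically closed and this data is defined over a finitely generated $k$-subalgebra $R\subset k'$, we may specialize at a $k$-point of $\Spec(R)$ — by generic flatness and \Cref{lem1}-type fiberwise density, the specialized $H$-torsor still induces $E_V$ generically, giving a reduction of structure of $E=(E_V)_K$ to $H$ over $K$.

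The main obstacle is making the specialization argument clean: one must ensure that (a) the isomorphism of $G$-torsors witnessing that the $H$-torsor induces $E_{V_{k'}}$ (not just that both become trivial somewhere) spreads out over $\Spec(R)$, and (b) the $k$-point of $\Spec(R)$ can be chosen so that the relevant dense open where everything is defined still meets the generic point of $V$ in the specialized family — here the hypotheses that $k=\overline{k}$ (giving existence of $k$-points of $\Spec(R)$) and that $K\otimes_k k'$ is a domain (giving that $V_{k'}$ is irreducible, so ``generic'' is unambiguous) are exactly what is needed. This is the same pattern as the spreading-out arguments in \Cref{rationality-properties-rigidity} and \Cref{prop:properties-grs}, so I expect it to go through with the same limit/generic-flatness toolkit, and the lemma then follows. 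Note also that smoothness of $H$ and $G$ is not needed for this lemma; it is only the good-characteristic hypothesis in \Cref{thm.reduction-of-structure} that will require it, so here I would keep the statement as is and not invoke more than functoriality of $H^1$ plus spreading out.
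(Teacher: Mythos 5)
Your proposal is correct and, after you abandon the false start with \Cref{thm.main.torsors}, the final argument is essentially the paper's: functoriality of $H^1$ gives the easy direction, and for the converse you spread out the reduction of structure of $E_{K'}$ to a dense open of $V\times_k W$ (where $k(W)=k'$, i.e., $W=\Spec(R)$), pick a $k$-point in that open using $k=\overline{k}$, and restrict to the slice $V\times_k\{w\}\cong V$. One small over-engineering: you invoke generic flatness and a \Cref{lem1}-type fiberwise-density argument to ensure the specialized open is still dense in $V$, but since you chose the $k$-point $(v,w)$ inside the dense open $U$, the slice $U\cap(V\times_k\{w\})$ is automatically a nonempty open of the irreducible $V$, hence dense — this is all the paper needs, no flatness required. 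Your observation that smoothness of $H$ and $G$ is not used here is also correct and matches the paper's hypotheses.
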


    Recall that, since $k$ is algebraically closed, if $A$ and $B$ are $k$-algebras and domains, the $k$-algebra $A\otimes_kB$ is also a domain. In particular, $K\otimes_kk'$ is a domain. 

\begin{proof}
If the $G$-torsor $E\to \on{Spec}(K)$ admits reduction of structure to $H$, then clearly so does $E_{K'}\to \on{Spec}(K')$.

Conversely, suppose that $E_{K'}\to \on{Spec}(K')$ admits reduction of structure to $H$. We may assume without loss of generality that the extensions $K/k$ and $k'/k$ are finitely generated. Then there exist integral $k$-varieties $V$ and $W$ such that $k(V)=K$ and $k(W)=k'$, respectively and consequently, $K'=k'(V)=k(V\times_kW)$. Shrinking $V$ if necessary, we may assume that there exists a $G$-torsor $\mc{E}\to V$ whose generic fiber is $E\to \on{Spec}(K)$. Since $E_{K'}\to \on{Spec}(K')$ admits reduction of structure to $H$, there exists a dense open subscheme $U\subset V\times_kW$ such that the pullback $\mc{E}|_U$ along the first projection $U\to V$ admits reduction of structure to $H$. Since $k$ is algebraically closed and $U$ is non-empty, there exists a $k$-point $u=(v,w)\in U(k)$. Then $U\cap (V\times_k \{w\})=V'\times_k\{w\}$ for some dense open subscheme $V'\subset V$. Restricting $\mc{E}|_U$ to $V'\times_k\{w\}$, we conclude that $\mc{E}|_{V'}\to V'$ admits reduction of structure to $H$, and hence so does the generic fiber $E\to \on{Spec}(K)$, as desired.
\end{proof}

\begin{lemma}\label{specialization.reduction-of-structure}
    Let $k$ be a field, let $R$ be a complete discrete valuation ring containing $k$ with fraction field $K$ and residue field $\kappa$. Let $H\to G$ be a homomorphism of smooth affine $k$-groups and let $E\to \on{Spec}(R)$ be a $G$-torsor. Assume that the $k$-unipotent radical $U$ of $G^0$ is split over $k$, that the quotient $G^0/U$ is reductive, and that  $H$ is in good characteristic. If $E_K$ admits reduction of structure to $H$, then so does $E_\kappa$.
\end{lemma}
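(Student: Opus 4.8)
The plan is to use the good-characteristic hypothesis to replace $H$ by a finite subgroup, to make the associated torsor unramified after a tame base change, and then to transport the resulting reduction of structure to the special fiber by a Grothendieck--Serre-type injectivity for $G$. By \Cref{def.good-characteristic}, there is a finite discrete $k$-subgroup $\Lambda\subseteq H$ of order $n$ invertible in $k$ --- hence also in $\kappa$, since $\Char(\kappa)=\Char(k)$ --- such that $H^{1}(F,\Lambda)\to H^{1}(F,H)$ is surjective for every field extension $F/k$. Taking $F=K$ and using that $E_{K}$ admits reduction of structure to $H$, the class of $E_{K}$ in $H^{1}(K,G)$ lies in the image of $H^{1}(K,\Lambda)\to H^{1}(K,G)$; fix a $\Lambda$-torsor $T_{K}\to\Spec(K)$ with $T_{K}\times^{\Lambda}G\cong E_{K}$. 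Since $\Lambda\subseteq H$, it suffices to prove that $E_{\kappa}$ admits reduction of structure to $\Lambda$.

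First I would remove the ramification of $T_{K}$. As $n=|\Lambda|$ is prime to $\Char(\kappa)$, the torsor $T_{K}$ is tamely ramified along the closed point of $\Spec(R)$; let $\pi$ be a uniformizer of $R$, set $K_{n}\coloneqq K(\pi^{1/n})$, and let $R_{n}$ be the integral closure of $R$ in $K_{n}$. Then $R_{n}$ is a complete discrete valuation ring, totally (tamely) ramified over $R$, so its residue field is again $\kappa$, and $k\subseteq R\subseteq R_{n}$. The inertia group of $\Spec(K_{n})$ is the unique subgroup of index $n$ of the pro-cyclic tame inertia of $\Spec(K)$; since $T_{K}$ is tame with local monodromy of order dividing $n$, the restriction $T_{K_{n}}$ is unramified, i.e.\ it is the restriction of a $\Lambda$-torsor $T_{R_{n}}\to\Spec(R_{n})$ (one uses here that $\pi_{1}(\Spec(K_{n}))\to\pi_{1}(\Spec(R_{n}))$ is surjective with kernel topologically generated by the inertia, as $R_{n}$ is complete). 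Put $E'\coloneqq T_{R_{n}}\times^{\Lambda}G$; then $E'$ admits reduction of structure to $\Lambda$ over all of $\Spec(R_{n})$, while $E'_{K_{n}}\cong T_{K_{n}}\times^{\Lambda}G\cong E_{K_{n}}$.

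It now suffices to show that $E'\cong E_{R_{n}}$ as $G$-torsors over $\Spec(R_{n})$, for then restriction to the closed point gives $E_{\kappa}\cong E'_{\kappa}$, which admits reduction of structure to $\Lambda$, hence to $H$. Equivalently, the map $H^{1}(\Spec(R_{n}),G)\to H^{1}(\Spec(K_{n}),G)$ must be injective, and I would deduce this from the Grothendieck--Serre conjecture --- a theorem over complete discrete valuation rings --- by d\'evissage. The $k$-split unipotent radical $U$ of $G^{0}$ is normal in $G$ (being characteristic in $G^{0}$) and remains split over $R_{n}$, so $H^{i}(\Spec(R_{n}),U)=0=H^{i}(\Spec(K_{n}),U)$ for $i\geqslant 1$ (d\'evissage through $\mathbb{G}_{a}$, whose positive fppf cohomology on an affine scheme vanishes); via the twisting description of the fibers of $H^{1}(-,G)\to H^{1}(-,G/U)$ this reduces the claim to injectivity for $G/U$. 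Since $(G/U)^{0}=G^{0}/U$ is reductive and $(G/U)/(G/U)^{0}=G/G^{0}$ is finite \'etale, a second d\'evissage along $1\to G^{0}/U\to G/U\to G/G^{0}\to 1$ --- using the Grothendieck--Serre conjecture for $G^{0}/U$ and its inner forms (all reductive) together with the obvious injectivity for the finite \'etale group $G/G^{0}$ --- yields injectivity for $G/U$, and hence for $G$.

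I expect this last step to be the main difficulty: carrying out the two d\'evissages over the base $\Spec(R_{n})$ rather than over a field, and in particular checking that the twists involved remain split unipotent, resp.\ reductive, so that the Grothendieck--Serre input genuinely applies. The tame-specialization step, by contrast, should be routine once one observes that the totally ramified Kummer extension $K_{n}/K$ leaves the residue field unchanged.
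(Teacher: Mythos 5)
Your plan follows essentially the same route as the paper's argument in positive characteristic: reduce to the finite prime-to-$p$ subgroup $\Lambda$ given by the good-characteristic hypothesis, kill the ramification of the $\Lambda$-torsor by a totally tamely ramified Kummer base change $R\rightsquigarrow R_n$ (the paper first passes to $K_\infty=\bigcup K(\pi^{1/n})$ and then descends to a finite level, but going directly to $K_n$ with $n=|\Lambda|$ as you do is a clean shortcut and correct), and then reduce everything to the injectivity of $H^1(R_n,G)\to H^1(K_n,G)$. The paper also handles $\operatorname{char}(k)=0$ by a shorter separate argument (Florence's theorem on $H^1(k,G)\to H^1(K,G)$), whereas your route would treat both characteristics uniformly; that is a mild expository difference, not a mathematical one.

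The genuine gap is exactly the one you flag at the end, and you should not dismiss it: after twisting, the fibers of $H^1(R_n,G)\to H^1(R_n,G/U)$ are controlled by $H^1(R_n,\prescript{}{\tau}{U})$ for $G$-torsors $\tau$ over $R_n$, and $\prescript{}{\tau}{U}$ is only a form of $U$, not obviously $R_n$-split, so your observation that $U$ itself has vanishing higher cohomology is not what you need. The paper closes this as follows: $\prescript{}{\tau}{U}$ is smooth affine over $R_n$, so by \cite[Expos\'e XIV, Proposition 8.1]{SGA3III} the reduction map $H^1(R_n,\prescript{}{\tau}{U})\to H^1(\kappa,\prescript{}{\tau}{U})$ is bijective; and because $\tau$ is \'etale-locally trivial ($G$ smooth), $(\prescript{}{\tau}{U})_\kappa$ is a smooth connected unipotent $\kappa$-group that becomes split after a finite separable extension of $\kappa$, hence is already $\kappa$-split by \cite[Theorem B.3.4]{conrad-gabber-prasad}, so $H^1(\kappa,\prescript{}{\tau}{U})$ vanishes. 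Without some version of this argument, your d\'evissage does not go through. Your second d\'evissage along $1\to G^0/U\to G/U\to G/G^0\to 1$ is workable (twists of reductive groups stay reductive, and injectivity for the finite \'etale quotient over a Henselian DVR is standard), but the paper bypasses it by citing \cite[Lemma 3.5(b)]{reichstein2022behavior}, which already gives injectivity of $H^1(R_n,G)\to H^1(K_n,G)$ whenever $G^0$ is reductive.
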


Recall that a smooth connected unipotent $k$-group is $k$-\emph{split} if is an iterated extension of additive groups $\mathbb{G}_{a,k}$.

\smallskip

When
$\Char(k) = 0$, $H \to G$ can be an arbitrary homomorphism of affine $k$-groups. All other assumptions 
on $H$ and $G$ in \Cref{specialization.reduction-of-structure}
are automatic in characteristic $0$. Note also that our proof is considerably simpler in this setting; see below.

\begin{proof}
    First suppose that $\on{char}(k)=0$. By Cohen's structure theorem \cite[Tag 032A]{stacks-project}, there is an isomorphism of $k$-algebras $R\cong \kappa[\![t]\!]$. Replacing $k$ by $\kappa$, we may assume that $R = k[\![t]\!]$, so that $K=k(\!(t)\!)$, and we let $K_\infty\coloneqq \cup_{n\geqslant 1}k(\!(t^{1/n})\!)$ be the field of Puiseux series over $k$, which is an algebraic closure of $K$. 
    By \cite[Theorem 5.1]{florence2006points}, the pullback maps $H^1(k,G)\to H^1(K,G)$ and $H^1(k,H)\to H^1(K,H)$ are bijective. By \cite[Expos\'e XIV, Proposition 8.1]{SGA3III} the reduction maps $H^1(R,G)\to H^1(k,G)$ and $H^1(R,H)\to H^1(k,H)$ are bijective. Therefore the maps $H^1(R,G)\to H^1(K,G)$ and $H^1(R,H)\to H^1(K,H)$ are bijective. This tells us that if $E_K$ admits reduction of structure to $H$, so does $E$. Reducing modulo $t$, we conclude that $E_k$ admits reduction of structure to $H$, as desired.

Now assume that $\on{char}(k) = p >0$. Recall that we are assuming that $H$ is in good characteristic. This means that there exists a finite discrete group $\Lambda\subset H$ of order prime to $p$ such that for every field extension $K/k$ the map $H^1(K,\Lambda)\to H^1(K,H)$ is surjective; see \Cref{def.good-characteristic}. Replacing $H$ by $\Lambda$, we may thus assume that $H$ is a finite discrete group of prime-to-$p$ order. Let $\pi\in R$ be a uniformizer, and let $K_\infty\coloneqq \cup_n K(\pi^{1/n})\subset K_{\on{sep}}$, where $n$ ranges over all positive prime-to-$p$ integers, and where the $\pi^{1/n}\in K_{\on{sep}}$ are chosen so that for all positive prime-to-$p$ integers $m,n$ we have $(\pi^{1/mn})^m=\pi^{1/n}$. We have the following commutative diagram
    \[
\xymatrix{
H^1(\kappa,H) \ar[d] & H^1(R,H) \ar[l] \ar[r] \ar[d] & H^1(K,H) \ar[r] \ar[d] & H^1(K_\infty,H) \ar[d] \\
H^1(\kappa,G) & H^1(R,G) \ar[l] \ar[r] & H^1(K,G) \ar[r] & H^1(K_\infty,G)
}
\]
    Let $\alpha\in H^1(R,G)$ be the class of $E$. By assumption, $\alpha_K$ comes from $H^1(K,H)$, and so $\alpha_{K_\infty}$ comes from $H^1(K_\infty,H)$. Let $K\subset K_{\on{un}}\subset K_{\on{sep}}$ be the maximal unramified extension. Then $K_{\on{un}}/K$ is Galois, $K_{\on{un}}\cap K_{\infty}=K$, and the absolute Galois group of the compositum $K_{\on{un}}K_\infty$ is a pro-$p$ group. Because $H$ is discrete and of finite prime-to-$p$ order, the composite $H^1(R,H)\to H^1(K_\infty,H)$ is identified with the map
\[
\resizebox{\textwidth}{!}{$
\on{Hom}(\on{Gal}(K_{\on{un}}/K),H)/\mkern-2mu\raisebox{-0.4ex}{\(\sim\)} 
\xrightarrow{\;\simeq\;} 
\on{Hom}(\on{Gal}(K_{\on{un}}K_{\infty}/K_{\infty}),H)/\mkern-2mu\raisebox{-0.4ex}{\(\sim\)} 
\xrightarrow{\;\simeq\;} 
\on{Hom}(\on{Gal}(K_{\on{sep}}/K_{\infty}),H)/\mkern-2mu\raisebox{-0.4ex}{\(\sim\)}
$}
\]
    where $\sim$ denotes the equivalence relation induced by $H$-conjugation. We deduce that the map $H^1(R,H)\to H^1(K_\infty,H)$ is bijective, and hence in particular there exists $\beta\in H^1(R,H)$ such that $\beta_{K_\infty}$ maps to $\alpha_{K_\infty}$ in $H^1(K_\infty,G)$. Because $K_\infty$ is the filtered colimit of the subfields $K(\pi^{1/n})$, we deduce that $\beta_{K(\pi^{1/n})}$ maps to $\alpha_{K(\pi^{1/n})}$ in $H^1(K(\pi^{1/n}),G)$ for some $n\geqslant 1$. The integral closure $R'$ of $R$ in $K(\pi^{1/n})$ is a discrete valuation ring by \cite[Tag 09E8]{stacks-project}), and it has residue field $\kappa$ because the extension $K(\pi^{1/n})/K$ is totally ramified. Replacing $R$ by $R'$, we may thus assume that $n=1$, that is, that $\beta_K$ maps to $\alpha_K$ in $H^1(K,G)$. Assuming that the map $H^1(R,G)\to H^1(K,G)$ is injective, we conclude that $\beta$ maps to $\alpha$ in $H^1(R,G)$, and hence that $\beta_\kappa$ maps to $\alpha_\kappa$ in $H^1(\kappa,G)$, as desired.
    
    It remains to prove that the map $H^1(R,G)\to H^1(K,G)$ is injective. When $G^0$ is reductive, this is \cite[Lemma 3.5(b)]{reichstein2022behavior}. For the general case, let $U\subset G^0$ be the $k$-unipotent radical of $G^0$. By assumption, $U_{\cl{k}}$ is the unipotent radical of $(G^0)_{\cl{k}}$, and hence the quotient $G^0/U$ is reductive. As $U$ is a characteristic $k$-subgroup of $G^0$, it is a normal $k$-subgroup of $G$. Letting $\cl{G}\coloneqq G/U$, the connected component of $\cl{G}$ is reductive, and hence the map $H^1(R,\cl{G})\to H^1(K,\cl{G})$ is injective. By a twisting argument, the injectivity of $H^1(R,G)\to H^1(K,G)$ will follow once we prove that $H^1(R,\prescript{}{\tau}{U})$ is trivial for all $G$-torsors $\tau$ over $R$, where $\prescript{}{\tau}{U}$ is the twist of $U$ by $\tau$ via the conjugation $G$-action. Since $U$ is smooth over $k$, the twist $\prescript{}{\tau}{U}$ is smooth over $R$, so that by \cite[Expos\'e XIV, Proposition 8.1]{SGA3III} the reduction map $H^1(R,\prescript{}{\tau}{U})\to H^1(\kappa,\prescript{}{\tau}{U})$ is bijective. As $G$ is smooth over $k$, the $G$-torsor $\tau$ is \'etale-locally trivial, and hence the special fiber $(\prescript{}{\tau}{U})_\kappa$ is a smooth connected unipotent $\kappa$-group which becomes split after a finite separable field extension of $\kappa$. By \cite[Theorem B.3.4]{conrad-gabber-prasad}, this implies that $(\prescript{}{\tau}{U})_\kappa$ is $\kappa$-split, and hence that $H^1(\kappa,\prescript{}{\tau}{U})$ is trivial. Thus $H^1(R,\prescript{}{\tau}{U})$ is trivial, as desired.
\end{proof}

\begin{proof}[Proof of \Cref{thm.reduction-of-structure}]
    Let $\mc{P}$ be the property of $G$-torsors over fields given by admitting reduction of structure to $H$. The property $\tilde{\mc{P}}$ is the property of admitting reduction of structure to $H$ generically. Property $\mc{P}$ satisfies the rigidity condition by \Cref{rigidity.reduction-of-structure}. In view of \Cref{thm.main.torsors}, it suffices to prove that $\mc{P}$ satisfies the specialization condition with respect to $(S,\alpha)$. By \Cref{reduce-to-dvr-torsors}, we are reduced to the following situation. Let $A$ be a complete discrete valuation ring containing $k$, and let $T\to \on{Spec}(A)$ be a $G$-torsor. We assume that a geometric generic fiber of the $G$-torsor $T\to \on{Spec}(A)$ admits reduction of structure to $H$, and we must show that a geometric special fiber of $T\to \on{Spec}(A)$ also admits reduction of structure to $H$. Since $k$ is algebraically closed, the unipotent radical $U$ of $G$ is $k$-split (see \cite[Expos\'e XVII, Corollaire 4.1.3]{SGA3III}) and $G^0/U$ is reductive (see \cite[Proposition 1.1.9(2)]{conrad-gabber-prasad}). The conclusion follows from \Cref{specialization.reduction-of-structure}.
\end{proof}
\begin{example} \label{ex.split} 
Let $k$ be an algebraically closed field, let $G$ be a $k$-group, let $K/k$ be a field extension, and let $\tau$ be a $G$-torsor over $K$. The following are equivalent:
\begin{enumerate}
    \item $\tau$ is split,
    \item $\tau$ admits reduction of structure to the trivial group,
    \item $\ed_k(\tau)=0$, 
    \item $\ed_k(\tau)\leqslant 0$,
    \item $\rd_k(\tau)=0$,
    \item $\rd_k(\tau)\leqslant 0$.
\end{enumerate}
The equivalences (1) $\Longleftrightarrow$ (2) $\Longleftrightarrow$ (3) $\Longleftrightarrow$ (4) and (5) $\Longleftrightarrow$ (6) are by definition; the equivalence of (1) and (4) is proved in \cite[Lemma 7.8]{reichstein-hp13}. Therefore \Cref{thm.reduction-of-structure} implies the following: for a $k$-scheme of finite type $S$, a $G$-torsor $E \to X$ over $S$ such that $X \to S$ is flat, of finite type, and has geometrically integral fibers, there exists a countable collection of closed subschemes $S_j\subset S$ such that for every a geometric point $s$ of $S$, the $G$-torsor $E_{k(Y_s)} \to \on{Spec}(k(X_s))$ satisfies the equivalent properties (1)-(6) if and only if $s$ lies in $S_j$ for some $j\geqslant 1$.
\end{example}

\section{Symbol length for central simple algebras}
\label{sect.symbol-length}

Let $A$ be a central simple algebra of index $n$ and exponent dividing $m$ over a field $K$. Assume that $K$ contains a primitive $m$th root of unity $\zeta$.  By the Merkurjev-Suslin Theorem, $A$ is Brauer equivalent to a tensor product of symbol algebras 
\begin{equation} \label{e.merkurjev-suslin} A \sim (a_1, s_1)_m \otimes \ldots \otimes (a_l, b_l)_m , 
\end{equation}
for some $l\geqslant 0$ and $a_1, s_1, \ldots, a_l, b_l \in K^*$. Here by the symbol algebra $(a, b)_m$, we mean
the $m^2$-dimensional associative $K$-algebra generated by two elements, $x$ and $y$, subject to relations $x^m = a$,
$y^m = b$ and $yx = \zeta xy$.
The decomposition~\eqref{e.merkurjev-suslin} is not unique. We will denote the minimal number $l$ of symbol algebras by $l(A, m)$. In other words, $l(A, m)$ is the symbol length of the class of $A$ in $H^2(K, \mu_m)$. Some authors refer to $l(A, m)$ as {\em the Merkurjev-Suslin number} of $A$. This number is independent of the choice of $\zeta$ in $K$. In other words, $l(A, m)$ makes sense whenever $K$ contains a primitive $m$th root of unity, we do not need to choose a specific one.
Note also that if the exponent $e$ of $A$ is strictly less than the  index $n$, then we have some freedom in choosing $m$, and $l(A, m)$ may depend on this choice. For example $l(A, e)$ may be strictly larger than $l(A, n)$. For a more detailed discussion of this invariant, we refer the reader to \cite{tignol83}.

\begin{prop} \label{prop.symbol-length}
Let $m$ be a positive integer, let $k$ be a field containing a primitive $m$th root of unity, let $S$ be a $k$-scheme of finite type, let $X\to S$ be a flat morphism, and let $\mathcal{A}$ be an Azumaya algebra over $X$ of exponent dividing $m$. Given a geometric point $s$ of $S$, we will say that $l(\mathcal{A}_s, m) \leqslant d$ if for every generic point $\Spec(K) \to X_s$ of an irreducible component of $X_s$, the Merkurjev-Suslin number $l((\mathcal{A}_s)_K, m)$ of the induced central simple algebra $(\mathcal{A}_s)_K$ is $\leqslant d$.

Then for any integer $d \geqslant 0$ there exists a countable collection $\{S_j\}_{j\geqslant 1}$ of closed subschemes of $S$
such that for a geometric point $s$ of $S$, we have
$l(\mathcal{A}_s, m) \leqslant d$  if and 
only if $s$ lies in $S_j$ for some $j \geqslant 1$.
\end{prop}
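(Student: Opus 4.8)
The plan is to recast ``symbol length at most $d$'' as a reduction-of-structure condition and then to apply the machinery of \Cref{sect.reduction-to-general}. Replacing $X$ by a connected component (and combining the resulting families at the end), I may assume $\mathcal{A}$ has constant degree $n$ over $X$; let $\mathcal{E}\to X$ be the associated $\PGL_n$-torsor. Since $k$ contains a primitive $m$th root of unity $\zeta$, the group $\mu_m$ is constant and isomorphic to $\Z/m\Z$ over $k$, and the ``clock and shift'' matrices give a closed embedding $\theta\colon\mu_m\times\mu_m\hookrightarrow\PGL_m$ for which $\theta_*(\bar a,\bar b)$ is the class of the symbol algebra $(a,b)_m$, for all $\bar a,\bar b\in K^*/K^{*m}=H^1(K,\mu_m)$ and all fields $K\supseteq k$. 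Put $G\coloneqq\PGL_{nm^d}$, $H\coloneqq(\mu_m\times\mu_m)^d$, let $\psi\colon H\to G$ be the composite of $\theta^{\times d}$, the Kronecker product $\PGL_m^{\times d}\to\PGL_{m^d}$ and $I_n\otimes(-)\colon\PGL_{m^d}\to\PGL_{nm^d}$, and let $\varphi\colon\PGL_n\to G$ be $M\mapsto M\otimes I_{m^d}$. A short computation in the Brauer group shows that a degree-$n$ central simple $K$-algebra $B$ of exponent dividing $m$ has $l(B,m)\leqslant d$ if and only if the $G$-torsor $\varphi_*[B]$ admits reduction of structure to $H$ along $\psi$ (both conditions say that $B\otimes_K\Mat_{m^d}(K)\cong\Mat_n(K)\otimes_K\bigotimes_{i=1}^{d}(a_i,b_i)_m$ for suitable $a_i,b_i\in K^*$, padding with split symbols if necessary).

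Let $\mathcal{P}$ be the property of $\PGL_n$-torsors over fields ``the corresponding algebra has exponent dividing $m$ and symbol length $\leqslant d$'' and $\tilde{\mathcal{P}}$ the property of satisfying $\mathcal{P}$ generically (\Cref{defin:property-torsors}(3)). Unwinding the definitions, \Cref{prop.symbol-length} is precisely the output of \Cref{thm.main.torsors} applied to $\tilde{\mathcal{P}}$ and to the $\PGL_n$-torsor $\mathcal{E}\to X$ over $S$, so it suffices to check the rigidity and specialization conditions. By the equivalence above, $\mathcal{P}$ is the restriction along $\varphi_*$ of the property ``admits reduction of structure to $H$ along $\psi$'' of $G$-torsors over fields; hence $\mathcal{P}$, and therefore $\tilde{\mathcal{P}}$, inherits the rigidity condition from \Cref{rigidity.reduction-of-structure}.

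For the specialization condition, \Cref{reduce-to-dvr-torsors} reduces the problem to the case $S=\Spec(R)$ with $R$ a complete discrete valuation ring containing $k$, and from there, arguing component by component over $R$ as in the proof of \Cref{thm.rat-sections} (flatness of $X\to\Spec(R)$ makes the local ring of $X$ at a generic point of the closed fiber a discrete valuation ring, whose fraction and residue fields are the function fields of the component of the generic and closed fibers through that point), one is reduced to \Cref{specialization.reduction-of-structure}. That lemma applies here: $G=\PGL_{nm^d}$ has reductive identity component and trivial unipotent radical, and $H=(\mu_m\times\mu_m)^d$ is in good characteristic, since $m$ is invertible in $k$ (take $\Lambda=H$ in \Cref{def.good-characteristic}). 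Combined with the reformulation of the first paragraph and the monotonicity of symbol length under field extensions, it gives the specialization of ``symbol length $\leqslant d$'' along $\varphi$, and \Cref{thm.main.torsors} then yields the proposition.

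I expect the specialization step to be the crux: it is precisely the need to route through \Cref{specialization.reduction-of-structure}, which already incorporates the (characteristic-sensitive) ramification-theoretic input via extensions of the form $F(\pi^{1/n})$, that forces the translation of bounded symbol length into reduction of structure along $\psi\colon(\mu_m\times\mu_m)^d\to\PGL_{nm^d}$. A more routine but necessary point is the component-by-component reduction over a discrete valuation ring: because the fibers of $X\to S$ need not be irreducible, one cannot simply invoke \Cref{thm.reduction-of-structure} but must handle each generic point of a fiber separately, exactly as in \Cref{thm.rat-sections}. One should also verify the elementary cocycle computation relating $\theta$ to symbol algebras and identifying the two descriptions of $\mathcal{P}$.
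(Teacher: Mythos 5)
Your proof is correct and tracks the paper's strategy — recast \enquote{symbol length $\leqslant d$} as reduction of structure and feed it into the machinery of \Cref{sect.reduction-to-general} — but the encoding is genuinely different in two places, and one of your precautions turns out to plug a small gap in the paper's own proof.

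The paper works with $G=\GL_{nr}/\mu_m$-torsors (where $r/t=n/m^d$ in lowest terms), so that a class in $H^1(K,G)$ remembers not just a Brauer class but a specific degree-$nr$ algebra of exponent dividing $m$. Its \Cref{lem.symbol} identifies $l(A,m)\leqslant d$ with reduction of structure of $\Mat_t(A)$ to $\Gamma\times\Gamma^*$ via the regular/dual representation $\rho_{\Gamma,r}$, with $\Gamma=(\Z/m\Z)^d$. You instead work with $\PGL_{nm^d}$-torsors, tensoring with $\Mat_{m^d}$ to align degrees, and encode symbols via the clock-and-shift embedding $\mu_m\times\mu_m\hookrightarrow\PGL_m$. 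These are really the same construction — the clock matrix is the image of $\Gamma^*$ acting by characters and the shift matrix the image of $\Gamma$ acting by the regular representation, with the relation $CS=\zeta SC$ being exactly the paper's $\rho^*(\chi)\rho(a)=\chi(a)\rho(a)\rho^*(\chi)$ — but yours is pushed further down to $\PGL$. The trade-off: $\GL_N/\mu_m$ builds the exponent condition into the group (so the property is automatically about exponent-$m$ algebras), whereas you have to add \enquote{exponent dividing $m$} to $\mathcal{P}$ by hand (though, as you implicitly use, reduction of structure to $(\mu_m\times\mu_m)^d$ forces exponent $\mid m$ anyway). On the other hand, $\PGL_N$-torsors are the more standard language for Azumaya algebras, and $\Mat_{m^d}$-padding avoids the $r/t$ bookkeeping. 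Both roads lead to the same cocycle identity $\theta_*(\bar a,\bar b)=[(a,b)_m]$ and ultimately to \Cref{specialization.reduction-of-structure}.

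One point worth flagging in your favor: the paper's proof invokes \Cref{thm.reduction-of-structure} directly, but that theorem requires $X\to S$ to have geometrically integral fibers, which \Cref{prop.symbol-length} does not assume (it only says \enquote{flat}). Your component-by-component reduction over a complete discrete valuation ring, modeled on the proof of \Cref{thm.rat-sections}, is exactly the extra step needed to handle reducible fibers, and it is the honest way to get from \Cref{reduce-to-dvr-torsors} to \Cref{specialization.reduction-of-structure} in this generality. So your proof is not only correct but slightly more careful on this point than the paper's.
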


Our proof of this proposition will rely on the following lemma. Let $\Gamma$ be a finite abelian group of order $n$ and exponent dividing $m$. Let $K$ be a field containing a primitive $m$th root of unity
 and $V = K[\Gamma]$ be the group algebra. Consider the regular representation 
$\rho \colon \Gamma \hookrightarrow \GL(V) = \GL_n$ given by $\rho(a) \colon  g  \mapsto a \cdot g$ for every $a \in \Gamma$ and every $g \in \Gamma \subset V$ and the representation $\rho^* \colon \Gamma^* \to \GL(V) = \GL_n$ of the dual group $\Gamma^*:= \operatorname{Hom}(\Gamma, \mathbb{G}_m)$, given by $\chi \colon g \mapsto \chi(g) g$ for every character $\chi \colon \Gamma \to \mathbb G_m$ and every $g \in \Gamma \subset V$. It is easy to see that \[ \rho^*(\chi) \rho(a) = \chi(a) \cdot \rho(a) \rho^*(\chi) . \]
This shows that $\rho$ and $\rho^*$ do not define a representation of $\Gamma \times \Gamma^* \to \GL(V) = \GL_n$. On the other hand, since $\chi(a)$ is an $m$th root of unity, they do define an embedding 
\begin{equation} \label{e.rho_Gamma}
\rho_{\Gamma} \colon= \rho \times \rho^* \, \colon \Gamma \times \Gamma^* \hookrightarrow \GL_n/\mu_m.
\end{equation}
We will need the following slight generalization of this construction. For any integer $r \geqslant 1$, note that there is a natural isomorphism $(\Gamma^r)^* \simeq (\Gamma^*)^r$. This allows us to define the diagonal embedding $\Delta_{\Gamma, r} \colon \Gamma \times \Gamma^* \hookrightarrow \Gamma^r \times (\Gamma^r)^*$. Let 
\[ \rho_{\Gamma, r} \colon \Gamma \times \Gamma^* \hookrightarrow \GL(V^r)/\mu_m = \GL_{nr}/ \mu_m \]
be the composition of
$\Delta_{\Gamma, r}$ and $\rho_{\Gamma^r}$. Note that for $r = 1$, we have $\Delta_{\Gamma, 1} = \operatorname{id}$, and $\rho_{\Gamma, 1} = \rho_{\Gamma}$.

\begin{lemma} \label{lem.symbol} 
Suppose $A$ is a central simple algebra of degree $n$ and exponent dividing $m$ over a field $K$. Assume that $K$ contains a primitive $m$th root of unity, and $n = m^d r$ for some integers $d \geqslant 0$ and $r \geqslant 1$. Then the following conditions are equivalent:

\smallskip
(a) $A$ is $K$-isomorphic to a tensor product $(a_1, b_1)_m \otimes_K \ldots \otimes_K (a_d, b_d)_m \otimes_K \Mat_r(K)$ for some $a_1, b_1, \ldots, a_d, b_d \in K^*$.

\smallskip
(b) the class of $A$ in $H^1(K, \GL_{n}/\mu_m)$ lies in the image of the natural map
\[ (\rho_{\Gamma, r})_* \colon H^1(K, \Gamma \times {\Gamma}^*) \longrightarrow H^1(K, \GL_{nr}/\mu_m), \]
where $\Gamma = (\mathbb{Z}/m \mathbb Z)^d$.
\end{lemma}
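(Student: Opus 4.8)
The plan is to reduce the equivalence to a single descent computation, using the standard dictionary between central simple algebras and torsors under $\GL_n/\mu_m$. Recall that dictionary first. Since $\mu_m$ is central in $\GL_n$ and acts trivially by conjugation on $\Mat_n(K)$, the group $\GL_n/\mu_m$ acts on $\Mat_n(K)$ by conjugation, and $\tau\mapsto\prescript{}{\tau}{\Mat_n(K)}$ identifies $H^1(K,\GL_n/\mu_m)$ with the set of central simple $K$-algebras of degree $n$ and exponent dividing $m$: the map $H^1(K,\GL_n/\mu_m)\to H^1(K,\PGL_n)$ is injective, since its kernel is the image of $H^1(K,\mathbb{G}_m/\mu_m)=H^1(K,\mathbb{G}_m)=0$ (using $\mathbb{G}_m/\mu_m\cong\mathbb{G}_m$ and Hilbert's Theorem~90), while comparison of the central extensions $1\to\mathbb{G}_m\to\GL_n\to\PGL_n\to1$ and $1\to\mathbb{G}_m/\mu_m\to\GL_n/\mu_m\to\PGL_n\to1$ shows that the connecting map $H^1(K,\PGL_n)\to H^2(K,\mathbb{G}_m/\mu_m)=\Br(K)$ sends a degree-$n$ algebra $B$ to $m\cdot[B]$, so that $B$ lifts precisely when $\exp(B)\mid m$. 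Thus $A$ has a well-defined class in $H^1(K,\GL_n/\mu_m)$ by the exponent hypothesis, and condition (b) says exactly that this class lies in the image of $(\rho_{\Gamma,r})_*$.

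With this in hand, (a) $\Leftrightarrow$ (b) reduces to the assertion that $(\rho_{\Gamma,r})_*$ of the $(\Gamma\times\Gamma^*)$-torsor given by a tuple $(\bar a_1,\dots,\bar a_d,\bar b_1,\dots,\bar b_d)\in(K^*/(K^*)^m)^{2d}\cong H^1(K,\Gamma\times\Gamma^*)$ (using that $K$ contains $\mu_m$) twists $\Mat_n(K)$ into $(a_1,b_1)_m\otimes_K\cdots\otimes_K(a_d,b_d)_m\otimes_K\Mat_r(K)$. I would prove this in three steps. \emph{Step 1} ($d=1$, $r=1$): here $\rho_{\Z/m}$ carries a generator of the $\Z/m$ factor and a generator of the $\mu_m$ factor to the classes of the cyclic permutation matrix $P$ and of $D=\on{diag}(1,\zeta,\dots,\zeta^{m-1})$ respectively, and $DP=\zeta PD$; since $(a,b)_m\otimes_K\overline{K}\cong\Mat_m(\overline{K})$ via $x\mapsto\sqrt[m]{a}\,P$, $y\mapsto\sqrt[m]{b}\,D$, a direct computation with conjugation cocycles identifies the Galois descent datum of $(a,b)_m$ with $(\rho_{\Z/m})_*$ of the torsor $(\bar a,\bar b)$, up to replacing $a,b$ by their inverses (which leaves the family $\{(a',b')_m\}$ unchanged). \emph{Step 2} (general $d$, $r=1$): $K[\Gamma]=K[\Z/m]^{\otimes d}$ and the regular representation of a direct product is the external tensor product of the regular representations, so $\rho_\Gamma$ is assembled from $d$ copies of $\rho_{\Z/m}$ placed in the $d$ tensor slots; since Galois descent is compatible with tensor products, twisting $\Mat(K[\Gamma])=\Mat_m(K)^{\otimes d}$ by a product torsor $(\theta_1,\dots,\theta_d)$ yields $\bigotimes_{i=1}^d\prescript{}{\theta_i}{\Mat_m(K)}=\bigotimes_{i=1}^d(a_i,b_i)_m$. \emph{Step 3} (the factor $\Mat_r$): by construction $\rho_{\Gamma,r}$ is $\rho_\Gamma$ followed by the diagonal block embedding $\GL(V)/\mu_m\hookrightarrow\GL(V^{\oplus r})/\mu_m$ with $V=K[\Gamma]$; since $\Mat(V^{\oplus r})=\Mat_r(\Mat(V))$ and a torsor pulled back along this embedding acts only on the $\Mat(V)$ factor, the twist of $\Mat_n(K)=\Mat_r(\Mat(V))$ is $\Mat_r(K)\otimes_K\bigl(\text{twist of }\Mat(V)\bigr)$.

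Combining the three steps gives the asserted description of the image of $(\rho_{\Gamma,r})_*$, and then (a) $\Leftrightarrow$ (b) follows from the dictionary of the first paragraph: if $A$ has the form in (a) then $A\cong\prescript{}{\tau}{\Mat_n(K)}$ with $\tau=(\rho_{\Gamma,r})_*(\bar a_1,\dots,\bar b_d)$, so by injectivity the class of $A$ equals $\tau$ and lies in the image; conversely, if the class of $A$ equals $(\rho_{\Gamma,r})_*(\bar a_1,\dots,\bar b_d)$, the dictionary returns $A\cong\bigotimes_i(a_i,b_i)_m\otimes_K\Mat_r(K)$. The main obstacle is the bookkeeping in Step~1: pinning down the conjugation cocycle of $(a,b)_m$, matching it to the image of $\rho_{\Z/m}$, tracking which generator goes to $P$ versus $D$, and absorbing the sign ambiguities this creates. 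A cleaner route to Steps~1 and~2 at once is to observe that the preimage of $\Gamma\times\Gamma^*$ in $\GL_n$ is a Heisenberg-type central extension of $\Gamma\times\Gamma^*$ by $\mu_m$ whose class in $H^2(\Gamma\times\Gamma^*,\mu_m)$ is the tautological pairing $\Gamma\times\Gamma^*\to\mu_m$; functoriality of connecting maps then identifies the composite $H^1(K,\Gamma\times\Gamma^*)\to H^1(K,\GL_n/\mu_m)\hookrightarrow H^2(K,\mu_m)$ with $(\bar a_i,\bar b_i)_i\mapsto\sum_{i=1}^d(\bar a_i)\cup(\bar b_i)$, whose image in $\Br(K)$ is the class of $\bigotimes_i(a_i,b_i)_m$.
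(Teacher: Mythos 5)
Your proposal is correct and follows the same overall structure as the paper's proof: establish the dictionary between $H^1(K,\GL_n/\mu_m)$ and degree-$n$ algebras of exponent dividing $m$, treat the $r=1$ case (the assertion that $(\rho_\Gamma)_*$ of a tuple of Kummer classes corresponds to the tensor product of $d$ symbol algebras), and then peel off the $\Mat_r$ factor via the $r$-fold diagonal block embedding $\GL_{m^d}/\mu_m\hookrightarrow\GL_n/\mu_m$. The only difference is one of presentation: the paper simply cites the $r=1$ case (Gille--Szamuely, Prop.~4.7.3) where you work it out by an explicit cocycle/descent computation (your Steps~1--2) and also sketch a cleaner Heisenberg-extension route, and the paper phrases the final step as the conjunction of two conditions (a1), (a2) rather than directly computing the twist of $\Mat_r(K)\otimes\Mat_{m^d}(K)$, but these are the same argument.
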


\begin{proof} We proceed in two steps.

\medskip
{\bf Step 1.} The special case, where $r = 1$ is well known; see, e.g., Proposition 4.7.3 and its proof in~\cite{gille2017central}.

\medskip
{\bf Step 2.} We now turn to the general case, where $r$ is an arbitrary positive integer. Consider the diagonal embedding $\GL_n \to \GL_{nr}$ given by
\[ g \mapsto \begin{pmatrix} g & 0 & 0 & \ldots & 0 \\
                             0 & g & 0 & \ldots & 0 \\
                             \vdots& \vdots & \vdots & \vdots & \vdots            \\
                             0 & 0 & 0 & \ldots & g 
\end{pmatrix}. \]
This map induces an embedding $\Delta_{n, r} \colon \GL_n/\mu_m \to \GL_{nr}/\mu_m$ for any $m$ dividing $n$. Now observe that the following diagram commutes:
\[  \xymatrix{  \GL_n/\mu_m \ar@{->}[r]_{\Delta_{n, r}}  &   \GL_{nr}/\mu_m   \\ 
\Gamma \times \Gamma^* \ar@{->}[u]_{\rho_{\Gamma}} \ar@{->}[r]_{\Delta_{\Gamma, r}}  &   \Gamma^r \times (\Gamma^r)^* \ar@{->}[u]_{\rho_{\Gamma^r}}    } \]
Condition (a) holds if and only if the following two conditions hold:

\smallskip
(a1) $A$ is of the form $\Mat_s(A_0)$ for some central simple $K$-algebra $A_0$ of degree $m^d$. Note that if the $A_0$ exists, it is uniquely determined by $A$, up to $K$-isomorphism.

\smallskip
(a2) Moreover, $A_0$ is $K$-isomorphic to a tensor product $(a_1, b_1)_m \otimes \ldots (a_d, b_d)_m$ of $d$ symbol algebras for some $a_1, b_1, \ldots, a_d, b_d \in K^*$.

\smallskip
Now observe that (a1) is equivalent to saying that class $[A] \in H^1(K, \GL_{nr}/\mu_m)$ of $A$   
is the image of $[A_0]$ under the induced morphism \[(\Delta_{n, r})_* \colon H^1(K, \GL_n/\mu_m) \to H^1(K, \GL_{nr}/\mu_m), \]
for some central simple $K$-algebra $A_0$ of degree $m^d$ and exponent dividing $m$.
On the other hand, by Step 1, (a2) is equivalent to saying that the class of $[A_0]$ descends to $\Gamma \times \Gamma^*$. 

In summary, (a) holds $\Longleftrightarrow$ both (a1) and (a2) hold $\Longleftrightarrow$ $[A] \in H^1(K, \GL_{nr}/\mu_m)$ descends to $\Gamma \times \Gamma^*$ $\Longleftrightarrow$ (b) holds.
\end{proof}

\begin{proof}[Proof of \Cref{prop.symbol-length}]
Let $E \to X$ be the $\GL_n/\mu_m$-torsor whose image in $H^2(X, \mu_n)$ is the Brauer class of the Azumaya algebra $\mathcal{A}$ from the statement of \Cref{prop.symbol-length}. After replacing $\mathcal{A}$ by a Brauer equivalent Azumaya algebra over $X$, we may assume that $E \to X$ represents $\mathcal{A}$. Fix an integer $l \geqslant 0$. 

Now let $s$ be a geometric point of $S$ and $\mathcal{A}_s$ be the fiber of $\mathcal{A}$ over $s$. We would like to know whether or not the symbol length of $\mathcal{A}_s$ over the generic point of $X_s$ (or more generally, over the generic point of every irreducible component of $X_s$) is $\leqslant l$. We now use \Cref{lem.symbol} to restate this property of $s$ in terms of the $\GL_n/\mu_m$-torsor $E \to X$.

In general, let $m, n$ be positive integers such that $m$ divides $n$. Choose relatively prime positive integers $r$ and $t$ such that $r/t = n/m^d$. The positive integers $n, m, r$ and $t$ will be fixed from now on. By Wedderburn's Theorem, a central simple algebra $A$ over a field $K$, of degree $n$ and exponent dividing $m$, is Brauer-equivalent to $(a_1, b_1)_m \otimes \ldots \otimes (a_d, b_d)_m$ if and only if 
\[ \text{$A \otimes \Mat_t(K)$ is $K$-isomorphic to $(a_1, b_1)_m \otimes \ldots \otimes (a_d, b_d)_m \otimes \Mat_r(K)$.} \]
By \Cref{lem.symbol} this translates into:

\begin{align}
 l(A, m) \leqslant d 
\Longleftrightarrow\ 
& \text{the class of } \Mat_t(A) \text{ in } H^1(K, \GL_{nr}/\mu_m) \text{ admits reduction }\label{eq:reduction} \\
&  
\text{ of structure to } \Gamma \times \Gamma^* \text{ via } 
\rho_{\Gamma, r} \colon \Gamma \times \Gamma^* 
\hookrightarrow \GL_{nt}/\mu_m. \notag
\end{align}

Note that since we are assuming that $k$ contains a primitive $m$th root of unity, the homomorphism of $\rho_{\Gamma, r} \colon \Gamma \times \Gamma^* \hookrightarrow \GL_{nt}/\mu_m$ of algebraic groups is, in fact, defined over $k$. This means that reduction of structure from $\GL_{nt}/mu_m$ to $H = \Gamma \times \Gamma^*$ can be analyzed using \Cref{thm.reduction-of-structure}.
 
More specifically, let $E[t] \to X$ be the $\GL_{nt}/\mu_m$-torsor over $X$ corresponding to the Azumaya algebra $\mathcal{M}_t(\mathcal{A}) := \mathcal{A} \otimes_X \mathcal{M}_t(\mc{O}_X)$ of degree $nt$ and exponent dividing $m$. Here $\mc{M}_t(\mc{O}_X)$ denotes the split Azumaya algebra of degree $t$ over $X$. 
By \Cref{thm.reduction-of-structure}, there exist a countable collection $\{S_j\}_{j\geqslant 1}$ of closed subschemes  of $S$ such that, for every geometric point $s$ of $S$, the following conditions are equivalent:

\smallskip
(i) $E[t] \to X_s$ descends to $\Gamma \times \Gamma^*$ over every generic point of $X_s$, 

\smallskip
(ii) $s$ lies in $S_j$ for some $j \geqslant 1$. 

\smallskip 
Combining this with the equivalence (\ref{eq:reduction}), we see that the countable collection $\{S_j\}_{j\geqslant 1}$ has the property claimed in the statement of \Cref{prop.symbol-length}.
\end{proof}

\section{Strongly unramified torsors}\label{sect.strongly-unramified}

\begin{defin}\label{def:star}
    Let $k$ be a field, let $G$ be a $k$-group, and let $\mc{P}$ be a property of $G$-torsors over fields. We say that $\mc{P}$ \emph{satisfies condition (*)} if, whenever we are given
    \begin{itemize}
        \item an algebraically closed field $k'$ containing $k$,
        \item a $k'$-scheme of finite type $S$,
        \item a faithfully flat morphism of finite type $X\to S$ such that a general fiber of $X\to S$ is geometrically integral,
        \item a $G$-torsor $\alpha\colon E\to X$, and
        \item a $k'$-point $s_0\in S(k')$ such that $X_{s_0}$ is reduced and $\alpha_{s_0}$ does not satisfy $\mc{P}$ generically,
    \end{itemize}
    the set $\Omega\subset S(k')$ consisting of those $k'$-points $s$ such that $\alpha_s$ does not satisfy $\mc{P}$ generically is very general in $S$.
 \end{defin}   

The relationship among $E$, $X$, $S$, $k'$ and $k$ is shown in the diagram below.

 \[  \xymatrix{  E_{s_0} \ar@{->}[d]^{\alpha_{s_0}} \ar@{^{(}->}[r] &  E \ar@{->}[d]^{\alpha} & \\
 X_{s_0} \ar@{->}[d] \ar@{^{(}->}[r] & X \ar@{->}[d] &  \\
 \Spec(k') \ar@{^{(}->}[r]^{s_0} & S \ar@{->}[d]  & \\
 & \Spec(k')   \ar@{=}[lu] \ar@{->}[r] & \Spec(k). 
 } 
 \]

\begin{rmk} \label{rem.*}
    By \Cref{thm.main2-torsors}, if $\mc{P}$ satisfies the rigidity condition, as well as the specialization condition with respect to all pairs $(\on{Spec}(A),T\to \on{Spec}(A))$, where $A$ is a complete discrete valuation ring which contains $k$ and $T\to \on{Spec}(A)$ is a $G$-torsor, then $\mc{P}$ satisfies condition (*).
\end{rmk}

\begin{example}\label{ex.star}
    Let $k$ be a field, and let $G$ be a $k$-group. The following properties of $G$-torsors over fields satisfy (*).

    \begin{enumerate}
        \item Assume that $G^0$ is reductive, let $P\subset G$ be a subgroup such that $P^0$ is a parabolic subgroup of $G$, and let $\mc{P}$ be the property of $G$-torsors over fields  given by admitting 
        reduction of structure to $P$; see \Cref{defin:reduction-of-structure}. It follows that $\mc{P}$ satisfies the rigidity condition (see \Cref{prop:properties-grs}) and, by the valuative criterion for properness, the specialization condition with respect to $G$-torsors over complete discrete valuation rings. Thus $\mc{P}$ satisfies condition (*).
        \item Let $H\to G$ be a $k$-group homomorphism, and let $\mc{P}$ be the property of $G$-torsors over fields given by admitting 
         reduction of structure to $H$. Assume that $G$ and $H$ are smooth and that $H$ is in good characteristic. Property $\mc{P}$ satisfies the rigidity condition by \Cref{rigidity.reduction-of-structure}, and the specialization condition with respect to torsors over complete discrete valuation rings by \Cref{specialization.reduction-of-structure}. Thus $\mc{P}$ satisfies condition (*).
       \item Assume that $G^0$ is reductive and $G$ is in good characteristic. For a fixed non-negative integer $n$, consider
     property $\mc{P}_n$ from Example~\ref{ex.ed}. Let $k \subset k' \subset K$ be fields, where $k'$ is algebraically closed and let 
    $\alpha \colon E \to \Spec(K)$ be a $G$-torsor.
     Recall that, by definition, $\alpha$ has property $\mc{P}_n$ if $\ed_{k'}(\alpha) \leqslant n$. 
     As we pointed out in Example~\ref{ex.ed}, property~$\mc{P}_n$ satisfies the rigidity condition and the specialization condition with respect to pairs $(\on{Spec}(A),T\to \on{Spec}(A))$, where $A$ is a complete discrete valuation ring which contains $k$ and $T\to \on{Spec}(A)$ is a $G$-torsor. Thus it satisfies condition (*) by Remark~\ref{rem.*}.
       \item If we replace the essential dimension $\ed_{k'}(\alpha)$ by the resolvent degree $\rd_{k'}(\alpha)$, the resulting property also
       satisfies condition (*); see \Cref{ex.rd} for $n \geqslant 1$ and \Cref{ex.split} for $n = 0$.
    \end{enumerate} 
\end{example}

Let $k$ be a field, let $G$ be an affine $k$-group, let $K/k$ be a finitely generated field extension, and let $\tau \colon T \to \Spec(K)$ be a $G$-torsor. We say that $\tau$ is \emph{strongly unramified} if it can be spread out to a $G$-torsor over a projective $k$-variety. In other words, $\tau$ is strongly unramified, if there is a projective variety $Y$ over $k$ with function field $K$, and a $G$-torsor $\tilde{\tau} \colon \tilde{T} \to Y$ such that $\tau$ can be obtained from $\tilde{\tau}$ via the cartesian square 
\[  \xymatrix{  T \ar@{->}[d]_{\tau}   \ar@{->}[r]  &   \tilde{T}  \ar@{->}[d]^{\tilde{\tau}}   \\          
	\Spec(K)   \ar@{->}[r]^{\eta}     &  Y ,   } \]
where $\eta \colon \Spec(K) \to Y$ is the generic point of $Y$.

\begin{prop} \label{prop.combination}
 Let $k$ be an uncountable algebraically closed field, and let $G$ be a smooth affine $k$-group in good characteristic (see Definition~\ref{def.good-characteristic}). Suppose that $\mathcal{P}_1, \ldots, \mathcal{P}_t$ are properties of $G$-torsors over fields such that 
\begin{itemize}
    \item $\mathcal{P}_1, \ldots, \mathcal{P}_t$ satisfy condition (*), and
    \item there exist field extensions $K_1/k, \ldots, K_t/k$ of transcendence degree $e$ and $G$-torsors $\tau_1 \colon T_1 \to \Spec(K_1), \ldots, \tau_t \colon T \to \Spec(K_t)$ such that $\tau_i$ does not have property $\mathcal{P}_i$, for $i = 1, \ldots, t$.
\end{itemize}

\smallskip
Then there exists a finitely generated field extension $K/k$ of transcendence degree $e$ and a strongly unramified
$G$-torsor $\tau_0 \colon T_0 \to \Spec(K_0)$ such that $\tau$ does not have property $\mathcal{P}_i$ for any $i = 1, \ldots, t$.
\end{prop}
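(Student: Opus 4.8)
The plan is to realize all $t$ counterexamples simultaneously over a single field $K$ obtained as the function field of a well-chosen projective $k$-variety, and then apply the machinery of Section~\ref{sect.strongly-unramified} (specifically condition (*)) to deflect each of the finitely many bad loci away from the generic point. First I would, for each $i$, spread out $\tau_i \colon T_i \to \Spec(K_i)$ to a $G$-torsor $\alpha_i \colon \mc{E}_i \to V_i$ over an integral $k$-variety $V_i$ of dimension $e$ with $k(V_i) = K_i$, shrinking $V_i$ so that $\alpha_i$ is genuinely a $G$-torsor there. After replacing each $V_i$ by a projective compactification and then, if necessary, by an alteration or simply by restricting to a dense open locus where the torsor extends, I get projective $k$-varieties $\cl{V}_i$ of dimension $e$; the torsor need only be defined on a dense open $U_i \subset \cl{V}_i$, which is all we need since we care about the generic point. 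The key geometric input is to assemble these into one parameter family: let $Y$ be an integral projective $k$-variety of dimension $e$ that dominates — or maps with dense image to — all of the $\cl{V}_i$ simultaneously in a suitable incidence sense, or more simply, work with the product $S \coloneqq \cl{V}_1 \times_k \cdots \times_k \cl{V}_t$ together with the projections $p_i \colon S \to \cl{V}_i$.

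The main construction is then as follows. Over $S = \cl{V}_1\times_k\cdots\times_k\cl{V}_t$, consider a family $X \to S$ whose fiber over a point $s = (s_1,\dots,s_t)$ is a projective $e$-dimensional variety carrying, simultaneously, data specializing each $\alpha_i$; concretely, take $X \coloneqq Y \times_k S$ for a fixed auxiliary projective $e$-fold $Y$ equipped with dominant rational maps $\psi_i \colon Y \dashrightarrow \cl{V}_i$, and pull back $\alpha_i$ along the composite $Y \times_k S \dashrightarrow \cl{V}_i$ (where the map to $\cl{V}_i$ uses $\psi_i$ on the $Y$-factor) to get a $G$-torsor $\beta_i$ over a dense open of $X$. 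Then $\beta_i$ restricted to the fiber $X_{s}$ over a point $s$ whose $i$-th coordinate is a general point of $\cl{V}_i$ is, generically on $X_s$, a base change of $\alpha_i$, hence — by the rigidity condition, which each $\mc{P}_i$ satisfies (being among the examples of Example~\ref{ex.star}, or more directly because condition (*) presupposes rigidity via Theorem~\ref{thm.main2-torsors}) — does \emph{not} satisfy $\mc{P}_i$ generically for $s_0$ with $i$-th coordinate generic. Choosing a base point $s_0 \in S(k)$ whose $i$-th coordinate is the generic point spread out to a $k$-point (possible since $k$ is algebraically closed and the loci are dense open), I arrange that $\beta_i|_{X_{s_0}}$ does not satisfy $\mc{P}_i$ generically, for every $i$ simultaneously, with $X_{s_0}$ reduced (indeed geometrically integral for $s_0$ general).

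Now apply condition (*) to each $\mc{P}_i$ in turn, with the family $\beta_i \colon E_i \to X$ over $S$ and the base point $s_0$: the set $\Omega_i \subset S(k)$ of points $s$ such that $\beta_i|_{X_s}$ does \emph{not} satisfy $\mc{P}_i$ generically is very general in $S$. Since $k$ is uncountable and a finite intersection of very general sets is very general (each is the complement of a countable union of proper closed subsets, and a finite union of countable collections of proper closed subsets is again such a collection), the intersection $\Omega \coloneqq \bigcap_{i=1}^t \Omega_i$ is very general, hence nonempty; pick any $s \in \Omega(k)$. Then for the projective $e$-fold $X_s$ — which for $s$ general is geometrically integral, so $K \coloneqq k(X_s)$ is a finitely generated field of transcendence degree $e$ — the restriction of $\beta_i$ to the generic point $\Spec(K) \to X_s$ is a $G$-torsor $\tau_0$ over $K$ that fails $\mc{P}_i$ for every $i$; and $\tau_0$ is strongly unramified by construction, since it spreads out to the $G$-torsor $E_i|_{X_s} \to X_s$ over the projective variety $X_s$ (the various $\beta_i|_{X_s}$ may a priori differ, but one takes the single torsor on $X$ that restricts to all the needed data — e.g. work with one $G$-torsor $\beta$ built to specialize to all $\alpha_i$ at once by using disjoint dense open loci in $Y$, or simply replace $X$ by a variety over which a common torsor is defined and invoke that $\mc{P}_i$ depends only on the generic fiber). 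This produces the desired $\tau_0 \colon T_0 \to \Spec(K_0)$.

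\textbf{Main obstacle.} The technical heart is producing a \emph{single} $G$-torsor on one family $X \to S$ that simultaneously degenerates to each $\alpha_i$ near an appropriate face of $S$, so that one strongly unramified torsor over $K$ inherits all $t$ failures at once; the cleanest route is probably to take $X \to S$ with $S$ a product and $X_s$ admitting $t$ different dominant rational maps to the $\cl{V}_i$ with images that can be made to avoid each other's exceptional loci, pulling back the $\alpha_i$ along these and then gluing via a Mayer–Vietoris-type argument on a dense open — but one must check that the resulting class in $H^1$ over the generic point of $X_s$ genuinely contains each pulled-back class as a ``restriction'', which is where the flexibility of working only up to the generic point (and hence only with the function field) is essential. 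Verifying that the finite intersection of very general sets is nonempty is routine given $k$ uncountable; verifying geometric integrality of the general fiber $X_s$ is a standard Bertini/generic-flatness argument; the genuinely delicate point is the simultaneous spreading-out.
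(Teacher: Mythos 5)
Your endgame (apply condition (*) to each $\mc{P}_i$ over a common parameter space $S$, intersect the resulting very general sets, and take the generic point of an irreducible projective general fiber) is exactly the paper's, but the construction of the family $X\to S$ is where your argument breaks down, and the step you flag as the ``main obstacle'' is not a technicality --- it is the heart of the proof, and it is not overcome by what you propose. Concretely: (a) with $X=Y\times_k S$ and $\beta_i$ pulled back from $Y$ via $\psi_i$, the torsor $\beta_i|_{X_s}$ is literally the same for every $s\in S(k)$, so the family is constant and condition (*) gives no information; (b) even at the starting point, $\beta_i|_{X_{s_0}}$ is generically the pullback of $\tau_i$ along the generically finite extension $k(Y)/K_i$, and failure of $\mc{P}_i$ need not survive such a pullback (rigidity only covers extensions of the algebraically closed ground field, not finite extensions of $K_i$; a torsor not admitting reduction of structure, say, can acquire it over a finite extension); (c) there is no ``Mayer--Vietoris'' gluing of $t$ distinct $G$-torsors on overlapping dense opens into a single torsor --- the conclusion requires one torsor $\tau_0$ failing all $t$ properties, and your construction only produces $t$ unrelated torsors each failing one property. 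A telling symptom is that your argument never uses the hypothesis that $G$ is in good characteristic.

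The paper resolves the simultaneity problem as follows. Good characteristic gives a finite discrete subgroup $H\subset G$ such that each $\tau_i$ is induced from an $H$-torsor $\epsilon_i$, which spreads out to $\tilde E_i\to X_i$ over an $e$-dimensional variety. Then \cite[Theorem 1.4]{reichstein2023behavior} (which requires $H$ finite) produces a smooth $H$-equivariant family $\mc{E}\to S$ with $H$ acting freely, whose general fiber is smooth, projective and irreducible, and whose fiber over a special point $s_0$ is $H$-equivariantly birational to the \emph{disjoint union} $\tilde E_1\sqcup\cdots\sqcup\tilde E_t$. The single induced $G$-torsor $\mc{T}\to\mc{X}=\mc{E}/H$ then fails $\mc{P}_i$ generically over $\mc{X}_{s_0}$ because it restricts to $\tau_i$ at the generic point of the $i$-th component --- this is exactly why ``does not satisfy $\mc{P}$ generically'' in condition (*) is phrased so that failure at one irreducible component suffices. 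Condition (*) then propagates each failure to a very general fiber, and since a general fiber is irreducible, all $t$ failures land on the unique generic point of one projective fiber, yielding the strongly unramified $\tau_0$. Without this reduction to a finite group and the interpolating family, your product construction cannot deliver a single torsor degenerating to all the $\tau_i$ at once.
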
 

\begin{proof} By our assumption, $G$ is in good characteristic.
This means that there exists a finite discrete subgroup $H \subset G$ such that each of the $G$-torsors $\tau_i$ admits reduction of structure to $H$. (Here $H$ is the subgroup we denoted by $\Lambda$ in \Cref{def.good-characteristic}.) In other words, there exist $H$-torsors $\epsilon_i \colon E_i \to \Spec(K_i)$ such that $\tau_i$ is the image of $\epsilon_i$ under the natural map $H^1(K_i, H) \to H^1(K_i, G)$.
Each $\epsilon_i$ can be spread out to an $H$-torsor 
\begin{equation} \label{e.tilde{E}_i}
\tilde{E}_i \to X_i,
\end{equation} where $X_i$ is an $e$-dimensional integral $k$-variety (not necessarily complete).

By~\cite[Theorem 1.4]{reichstein2023behavior} there exist a smooth  irreducible quasi-projective $k$-variety $S$, a smooth 
		 irreducible quasi-projective $H$-variety $\mc{E}$ and a smooth $H$-equivariant morphism $\pi \colon \mc{E}\to S$ 
		of constant relative dimension $e$ defined over $k$ such that:
		\begin{enumerate}[label=(\roman*)]
			\item $H$ acts trivially on $S$ and freely on $\mc{E}$,
				\item there exists a dense open subscheme $U \subset S$ such that for every $s\in U$ the fiber $\mc{E}_s$ is smooth, 
			projective and geometrically irreducible,
			\item there exists $s_0 \in S(k)$ such that the fiber $\mc{E}_{s_0}$ of $\pi$ over $s_0$
			is $H$-equivariantly birationally isomorphic to the disjoint union of $\tilde{E}_1, \ldots, \tilde{E}_t$.
		\end{enumerate}
		
        In particular, for any geometric point $s$ of $U$, the $G$-action on the fiber $\mc{E}_s$ is strongly unramified.

Since the $G$-action on $\mathcal{E}$ is free, $\mathcal{E}$ may be viewed as an $H$-torsor $\epsilon \colon \mathcal{E} \to \mathcal{X}$ over $\mathcal{X} = \mathcal{E}/H$. Note that in general $\mathcal{X}$ is an algebraic space, but since $H$ is finite and $\mathcal{E}$ is quasi-projective, $\mathcal{X}$ is, in fact, a scheme. Since $H$ acts trivially on $S$, the projection $\pi \colon \mathcal{E} \to S$ factors through $\mathcal{X}$. In other words, there exist a morphism $\overline{\pi} \colon \mathcal{X} \to S$ such that $\pi = \overline{\pi} \circ \epsilon$.

Now let $\tau \colon \mc{T} \to \mathcal{X}$ be the $G$-torsor 
induced by the $H$-torsor $\epsilon \colon \mathcal{E} \to \mathcal{X}$, that is,
\[\mathcal{T} = G *_H \mathcal{X} = (G \times \mathcal{X})/H,\] where $H$ acts on $G \times \mathcal{X}$ by $h \colon (g, x) \mapsto (gh^{-1}, hx)$. 

The relationship among $S$, $\mathcal{X}$, $\mathcal{E}$ and $\mathcal{T}$ is pictured on the left side the diagram below.
\[  \xymatrix{  & \mathcal{T} \ar@{->}[dd]^{\tau}  &  & \mathcal{T}_{s_0} \ar@{_(->>}[ll] \ar@{->}[dd] &  & & \bigsqcup_{\; i = 1}^{\; t} T_i \ar@{->}[dd]^{\sqcup \; \tau_i} \ar@{->}[lll] \\
\mathcal{E} \ar@{^{(}->}[ur] \ar@{->}[dr]^{\epsilon \; \;}   \ar@{->}[ddr]_{\pi}  &  & \mathcal{E}_{s_0} \ar@{^{(}->}[ur] \ar@{->}[dr]   \ar@{->}[ddr]  &  & & 
\bigsqcup_{\; i = 1}^{\; t} E_i \ar@{^{(}->}[ur] \ar@{->}[dr]^{\sqcup \; \epsilon_i} & \\         	    &  \mathcal{X} \ar@{->}[d]^{\overline{\pi}} & &  \mathcal{X}_{s_0} \ar@{->}[d]  & & & 
\bigsqcup_{\; i = 1}^{\; t} \Spec(K_i)
\ar@{->}[lll] \\ 
                & S   &  & \ar@{_(->}[ll] s_0  & &  & }      
                \] 
The middle part of the diagram shows the fiber over
the geometric point $s_0$ of $S$. By our construction $\mc{E}_{s_0}$
is birationally isomorphic to the disjoint union $\displaystyle \bigsqcup_{\; i = 1}^t \, \tilde{E}_i$,
where $\tilde{E}_i$ is the total space of the $H$-torsor~\eqref{e.tilde{E}_i}. Passing to the generic point $\Spec(K_i) \to X_i$ for each $i$, we obtain the the right side of the diagram.

For each $i = 1, \ldots, t$, since property $\mc{P}_i$ satisfies condition (*), there is a very general subset $\Omega_i\subset S(k)$ such that for every $s\in \Omega_i$ the $G$-torsor $\mathcal{T}_s \to \mathcal{X}_s$ does not satisfy $\mathcal{P}_i$ generically. The finite intersection $\Omega\coloneqq \cap_{i=1}^t\Omega_i$ is also very general, and hence, since $k$ is algebraically closed and uncountable, $\Omega$ is Zariski dense in $S$. Pick $u\in U(k)\cap \Omega$. Note that since
$u \in U(k)$, (ii) tells us that $\mathcal{E}_u$ is geometrically irreducible, and hence so is $\mc{X}_u$. 
The restriction of the $G$-torsor $\mc{T}_u \to X_u$ to $\on{Spec}(k(X_u))$ is strongly unramified and does not satisfy $\mc{P}_i$ for any $i=1,\ldots,t$. This is the torsor $\tau_0$ whose existence is asserted by Proposition~\ref{prop.combination}.
\end{proof} 

\section{Unramified non-crossed product algebras}\label{sect.unramified-non-crossed}

Let $A$ be a central division algebra of degree $n$ over a field $K$ and $\Gamma$ be a finite group of order $n$. Recall that
$A$ is called a $\Gamma$-{\em crossed product} if it has a maximal subfield $K \subset L \subset A$ such that $L$ is $\Gamma$-Galois over $K$.
We will refer to $A$ as a {\em crossed product} if it is a $\Gamma$-crossed product for some finite group $H$ of order $n$, and as a {\em non-crossed product} otherwise. 

For many years it was not known whether or not every finite-dimensional division algebra is a crossed product. Using the primary decomposition theorem, one readily reduces this question to the case, where $n$ is a prime power. The first examples of non-crossed products were constructed by Amitsur~\cite{amitsur72}. He showed that 
the universal division algebra $\UD(p^r, k)$ of degree $p^r$ over $k$
is a non-crossed product for every prime integer $p$, every integer exponent $r \geqslant 3$ and every base field $k$ of characteristic not divisible by $p$.

Because the center of $\UD(n, k)$ is rather large (it is of transcendence degree $n^2 + 1$ over $k$), there has been further work on this topic, constructing non-crossed products over ``small" fields, mostly of an arithmetic nature; see~\cite{brussel95, jacob-wadsworth96, hanke04, hanke05}. In a more geometric direction, the first author and Youssin~\cite{reichstein-youssin2} constructed non-crossed product $A$ of degree $p^r$ ($r \geqslant 3$) over the function field $K$ of 
a $6$-dimensional complex variety. As an application of \Cref{prop.combination}, we will now show that in this example $A$ can be taken to be strongly unramified.

\begin{prop} \label{prop.non-crossed} Let $k$ be an uncountable algebraically closed field of characteristic zero, let $p$ be a prime and let $r \geqslant 3$ be an integer. Then there exists a $6$-dimensional irreducible projective variety $Z$ defined over $k$ and a central division algebra $A$ over $K = k(Z)$ of degree $p^r$, such that

\smallskip
\begin{enumerate}[label=(\roman*)]
\item $A$ is represented by an Azumaya algebra over $Z$, and
\item $A$ is a non-crossed product over $K$. 
\end{enumerate}
\end{prop}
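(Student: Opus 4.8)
The plan is to derive this from \Cref{prop.combination} applied to the group $G = \PGL_{p^r}$, which is a smooth affine $k$-group and is in good characteristic since $\Char(k) = 0$ (see \cite[Theorem 1.1]{cgr} and the remark after \Cref{def.good-characteristic}). I would use two kinds of properties of $G$-torsors over fields. Let $\mc{P}_0$ be the property that the associated degree $p^r$ central simple algebra has index dividing $p^{r-1}$; by \Cref{rem.parabolic} this is the same as admitting reduction of structure to a suitable parabolic subgroup $P^{(p^{r-1})} \subset \PGL_{p^r}$, so $\mc{P}_0$ satisfies condition (*) by \Cref{ex.star}(1). Next, for each of the finitely many isomorphism classes of finite groups $\Gamma$ of order $p^r$, let $\mc{P}_\Gamma$ be the property that the associated central simple algebra is a $\Gamma$-crossed product; this is classically equivalent to the $\PGL_{p^r}$-torsor admitting reduction of structure to the normalizer $N(\Gamma) \subset \PGL_{p^r}$ of the image of the regular representation of $\Gamma$ (a closed subgroup defined over the prime field, because the regular representation is given by permutation matrices). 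Since $\Char(k) = 0$, both $\PGL_{p^r}$ and $N(\Gamma)$ are smooth and in good characteristic, so $\mc{P}_\Gamma$ satisfies condition (*) by \Cref{ex.star}(2). By construction, a $G$-torsor over a field $K$ --- that is, a degree $p^r$ central simple $K$-algebra --- fails $\mc{P}_0$ and every $\mc{P}_\Gamma$ if and only if it is a non-crossed product division algebra.

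It remains to exhibit witnesses: for each of these properties, a degree $p^r$ central simple algebra over a finitely generated extension of $k$ of transcendence degree $6$ failing that property. Here I would invoke the theorem of Reichstein and Youssin \cite{reichstein-youssin2}, which produces a non-crossed product division algebra $A_0$ of degree $p^r$ over the function field $K_0'$ of a $6$-dimensional variety over $\cl{\Q}$ (equivalently, over the algebraic closure of the prime field of $k$). Base changing $A_0$ along $\cl{\Q} \subset k$ --- that is, replacing $K_0'$ by the fraction field $K_0$ of the domain $K_0' \otimes_{\cl{\Q}} k$ (a domain because $\cl{\Q}$ is algebraically closed; cf.\ the remark after \Cref{rigidity.reduction-of-structure}) --- yields a degree $p^r$ central simple algebra over a finitely generated extension $K_0/k$ with $\trdeg_k K_0 = 6$. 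By \Cref{rigidity.reduction-of-structure}, admitting reduction of structure to $P^{(p^{r-1})}$, and to each $N(\Gamma)$, is unaffected by this base change, so the new algebra is still a division algebra and still a non-crossed product. Thus the single $\PGL_{p^r}$-torsor $\Spec(K_0) \to B_kG$ attached to it simultaneously witnesses the failure of $\mc{P}_0$ and of all the $\mc{P}_\Gamma$.

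Applying \Cref{prop.combination} with $e = 6$ then produces a finitely generated field extension $K/k$ of transcendence degree $6$ together with a strongly unramified $\PGL_{p^r}$-torsor over $\Spec(K)$ that fails $\mc{P}_0$ and every $\mc{P}_\Gamma$; equivalently, a strongly unramified central division algebra $A$ of degree $p^r$ over $K$ that is a non-crossed product. Unwinding the definition of strongly unramified, there is a projective $k$-variety $Z$ with $k(Z) = K$ and a $\PGL_{p^r}$-torsor $\tilde{T} \to Z$ pulling back to the given torsor along the generic point of $Z$. The variety $Z$ is irreducible because its function field is a field, it has dimension $\trdeg_k K = 6$, and the $\PGL_{p^r}$-torsor $\tilde{T} \to Z$ corresponds to an Azumaya algebra $\mc{A}$ of degree $p^r$ over $Z$ with generic fibre $A$. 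Then $Z$ and $A$ satisfy: $A$ is represented by the Azumaya algebra $\mc{A}$ over $Z$, and $A$ is a non-crossed product over $K$.

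The argument is thus mostly bookkeeping once two external inputs are in place. The first is the standard dictionary between $\Gamma$-crossed product structures on a degree $n$ central simple algebra and reductions of structure of the corresponding $\PGL_n$-torsor to $N(\Gamma)$; the second is the existence of a $6$-dimensional non-crossed product division algebra, which is the theorem of \cite{reichstein-youssin2}. I expect the only genuinely delicate point to be the passage from the ground field $\cl{\Q}$ used in \cite{reichstein-youssin2} to the arbitrary uncountable algebraically closed field $k$; this is exactly what the rigidity statement \Cref{rigidity.reduction-of-structure} is for (alternatively, one checks that the Reichstein--Youssin construction goes through verbatim over any algebraically closed field of characteristic zero).
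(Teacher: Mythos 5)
Your overall architecture is exactly the paper's: apply \Cref{prop.combination} with $G = \PGL_{p^r}$ and $e = 6$, taking one property per isomorphism class of group $\Gamma$ of order $p^r$ that expresses ``is a $\Gamma$-crossed product'' as a reduction-of-structure condition, and using the Reichstein--Youssin non-crossed product as the common witness. Your extra property $\mc{P}_0$ (index divides $p^{r-1}$, i.e.\ reduction to a parabolic) to force the output to be a division algebra is a sensible addition that the paper in fact glosses over, and your care in transporting the Reichstein--Youssin example from $\cl{\Q}$ to $k$ via \Cref{rigidity.reduction-of-structure} is harmless (the paper simply invokes \cite[Theorem 1.4]{reichstein-youssin2} over $k$ directly).

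The gap is in the dictionary you use for the crossed-product condition. A degree $n$ central simple algebra $A$ is a $\Gamma$-crossed product if and only if the associated $\PGL_n$-torsor admits reduction of structure to the subgroup $H_\Gamma = D \rtimes \Gamma$ generated by the diagonal maximal torus $D$ and the image $\rho(\Gamma)$ of the regular representation --- this is the group the paper uses --- and \emph{not} to the normalizer $N(\Gamma)$ of $\rho(\Gamma)$ in $\PGL_n$. These are genuinely different subgroups: $D$ does not normalize $\rho(\Gamma)$ (already for $n=2$, conjugating the transposition matrix by $\operatorname{diag}(a,b)$ yields a scalar multiple of it only when $a = \pm b$), so $D \rtimes \Gamma \not\subset N(\Gamma)$; on the other hand $N(\Gamma)$ contains the centralizer of $\rho(\Gamma)$, i.e.\ the units of $K[\Gamma]^{\mathrm{op}}$ modulo scalars, which has nothing to do with a crossed-product structure. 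The asserted equivalence is load-bearing in both directions: you need ``crossed product $\Rightarrow$ satisfies some $\mc{P}_\Gamma$'' to conclude that the output of \Cref{prop.combination} is a non-crossed product, and you need ``not a $\Gamma$-crossed product $\Rightarrow$ fails $\mc{P}_\Gamma$'' to know that the Reichstein--Youssin algebra is a valid witness; with $N(\Gamma)$ in place of $D \rtimes \Gamma$ at least one of these implications breaks. The fix is local: replace $N(\Gamma)$ by $D \rtimes \Gamma$ (still a smooth affine group in good characteristic since $\Char(k)=0$, so \Cref{ex.star}(2) applies unchanged, as does the rigidity argument for the base change from $\cl{\Q}$), after which the rest of your proof goes through.
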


\begin{proof} Let $\Gamma_1, \ldots, \Gamma_t$ be finite groups of order $n := p^r$. For each $i$ we view $\Gamma_i$ as a subgroup of $\PGL_n$, where the embedding $\Gamma_i \hookrightarrow \PGL_n$ is given by
composing the regular representation $\Gamma_i \hookrightarrow \GL_n$ with the natural projection $\GL_n \to \PGL_n$.

Let $D$ be the $n-1$-dimensional diagonal maximal torus of $\PGL_n$ and $H_i$ be the subgroup generated by $D$ and $\Gamma_i$. It is easy to see that $H_i$ is, in fact, a semidirect product $D \rtimes \Gamma_i$. Moreover, a central simple algebra $A$ over a field $K$
is an $\Gamma_i$-crossed product if and only if the $\PGL_n$ torsor
$E \to \Spec(K)$ associated to $A$ admits reduction of structure to $H_i$.

We let $\mathcal{P}_i$ be the property of admitting reduction of structure to $H_i$ for $\PGL_n$-torsors. In other words, a $\PGL_n$-torsor 
$\tau_i \colon E_i \to \Spec(K)$ has property $\mc{P}_i$ if and only if the central simple algebra $A_i/K$ of degree $n$ associated to $\tau$ is an $\Gamma_i$-crossed product.

We will now apply \Cref{prop.combination} to the properties $\mathcal{P}_1, \ldots, \mathcal{P}_t$, with $G = \PGL_{p^r}$ and $e = 6$.
Assumption (i) of \Cref{prop.combination} is satisfied by \Cref{ex.star}(2). (The $H_i$ are in good characteristic because $\on{char}(k)=0$.) It remains to check Assumption (ii) of \Cref{prop.combination}. For this, we refer to
\cite[Theorem 1.4]{reichstein-youssin2}, which asserts that for any prime integer $p$ and any $r \geqslant 3$, there exists a finitely generated extension $K/k$ of transcendence degree $6$ and a non-crossed product central simple algebra $A/K$ of degree $p^r$.\footnote{In \cite{reichstein-youssin2}, Theorem 1.4 is stated for all $r\geqslant 2$, but this is a misprint: the proof is only valid for $r \geqslant 3$. The existence of a non-crossed product of degree $p^2$ over any field containing an algebraically closed field $k$, remains an open problem.} In other words, $A$ is not an $\Gamma_i$-crossed product for any $i = 1, \ldots, t$. Equivalently, the $\PGL_{p^r}$-torsor $\tau \colon E\to \Spec(K)$ associated to $A$ does not have property $\mathcal{P}_i$ for any $i = 1, \ldots, t$. Taking $K_1 = \ldots = K_t = K$ and $\tau_1 = \ldots = \tau_t \colon E \to \Spec(K)$, we see that Assumption (ii) of \Cref{prop.combination} is satisfied, as desired. 
\end{proof}

\begin{rmk} The assumption that $\Char(k) = 0$ comes 
from~\cite[Theorem 1.4]{reichstein-youssin2}, whose proof relies on resolution of singularities. If this assumption can be weakened to $\Char(k) \neq p$ in~\cite[Theorem 1.4]{reichstein-youssin2}, then
it can also be weakened to $\Char(k) \neq p$ 
in Proposition~\ref{prop.non-crossed}.
\end{rmk}

\section{Unramified counterexamples to the period-index problem}\label{sect.period-index}

Let $K$ be a field and $A$ be a central simple algebra of index $\ind(A)$ and period $\per(A)$. By a theorem of Brauer, 
$\per(A)$ divides $\ind(A)$, and they have the same prime factors; see~\cite[Theorem 2.8.7]{gille2017central}.
In general nothing more can be said about the relationship between the period and the index: for any positive integers 
$m$ and $n$ such that $m$ divides $n$ and $m$, $n$ have the same prime factors, there exist a field $K$ and a division algebra $A/K$ of index $n$ and period $m$. However, for certain classes of fields $K$, one can show that $\ind(A)$
divides $\per(A)^e$, where $e$ is a fixed positive integer, 
independent of the choice of the field $K$ (within the specified class) or of the central simple algebra $A/K$.
The question of what the optimal exponent $e$ is for various classes of fields, is known as ``the period-index problem". 
Note that by the primary decomposition theorem, one can always assume that $\ind(A)$ (and thus $\per(A)$) is a prime power.

M.~Artin conjectured that if 
$K$ is a $C_2$-field, then $\per(A) = \ind(A)$. This conjecture was proved by Artin and Tate in the case, where
$\ind(A)$ is a power of $2$ or $3$; see \cite[Appendix]{artin-bs-varieties}. De Jong~\cite{dejong} proved Artin's conjecture for function fields of surfaces over an algebraically closed field. The following conjectural generalization to function fields of higher dimensional varieties first appeared in an unpublished preprint by Colliot-Th\'el\`ene~\cite{colliot2001survey}; see also 
the discussions in~\cite[Section 1]{dejong}.

\begin{conjecture} \label{conj.period-index}
Let $k$ be an algebraically closed field, $d$ be a positive integer, $Z$ be an integral $d$-dimensional $k$-variety and $K = k(Z)$ be the function field of $Z$. For every central simple algebra $A$ over $K$, $\ind(A)$ divides $\per(A)^{d-1}$.
\end{conjecture}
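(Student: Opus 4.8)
The statement is the period-index conjecture of Colliot-Th\'el\`ene, which is known only for $d\leqslant 2$ and remains open in general; what follows is therefore a strategy rather than a complete argument. The one harmless reduction is via the primary decomposition theorem: one may assume $\per(A)=\ell^n$ is a prime power, so that $\ind(A)=\ell^m$ and the claim becomes $m\leqslant n(d-1)$, equivalently, that $A$ admits a splitting field $L/K$ with $[L:K]$ dividing $\ell^{n(d-1)}$.

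The substantive approach is induction on $d$. The base case $d=1$ is Tsen's theorem ($\Br(k(Z))=0$ for $Z$ a curve over an algebraically closed field), and $d=2$ is de Jong's theorem on function fields of surfaces. For the inductive step one chooses a dominant morphism $f\colon Z\to C$ onto a smooth curve with $k(C)$ algebraically closed in $K=k(Z)$, so that $K$ is the function field of the $(d-1)$-fold $Z_{\cl{k(C)}}$ over the algebraically closed field $\cl{k(C)}$. By the inductive hypothesis, $A$ acquires index dividing $\ell^{n(d-2)}$ over the compositum $\cl{k(C)}\cdot K$, hence already over $L_0\cdot K$ for some \emph{finite} extension $L_0/k(C)$. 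If $L_0$ can be chosen with $[L_0:k(C)]$ dividing $\ell^n$, then composing $L_0\cdot K$ with a splitting field of $A_{L_0\cdot K}$ of degree dividing $\ell^{n(d-2)}$ produces a splitting field of $A$ of degree dividing $\ell^{n(d-1)}$, closing the induction. A parallel formulation, following de Jong and Lieblich, replaces the fibration by the moduli of $\mu_{\ell^n}$-twisted sheaves on $Z$: an Euler-characteristic computation on $Z$ singles out rank $\ell^{n(d-1)}$ as the one for which a twisted sheaf --- hence a splitting field of that degree --- should exist.

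The main obstacle is precisely the step both formulations leave open: there is no a priori reason that the index drop over $\cl{k(C)}$ is realized by an extension of $k(C)$ of degree dividing $\ell^n$. A general finite extension witnessing the drop has uncontrolled degree, so the naive induction yields no bound at all, and producing a controlled one requires understanding the ramification of $A$ along $f$ in a way that is known only for $d\leqslant 2$ (or for arithmetic base fields, as in the work of Saltman and others). In the twisted-sheaf picture the analogous difficulty is the nonemptiness of the relevant moduli space, together with the existence of an honest $K$-rational point rather than merely a zero-cycle of the right degree, for twisted sheaves on a variety of dimension $\geqslant 3$. Either step is essentially equivalent to the conjecture, and the family-theoretic methods of this paper do not help: by \Cref{ex.azumaya}, in any versal family of pairs (a $d$-fold, an Azumaya algebra on it) the locus of parameters satisfying the conjecture is a countable union of closed subschemes, so showing it is everything reduces to verifying the conjecture at the generic parameter --- which is the conjecture again.
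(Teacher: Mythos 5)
You were right not to attempt a proof: the statement is \Cref{conj.period-index}, which the paper presents as an open conjecture (due to Colliot-Th\'el\`ene), not as a theorem, and the paper contains no proof of it. What the paper actually proves is the reduction result \Cref{prop.period-index} --- if the conjecture fails for some $d$, then it fails for a strongly unramified class --- which is obtained from \Cref{prop.combination} and is logically independent of the truth of the conjecture itself. Your survey of the known cases ($d=1$ by Tsen, $d=2$ by de Jong, partial results for $d=3$) and of the obstruction to the naive induction on $d$ is accurate, and your closing observation is exactly the point the paper itself makes: \Cref{ex.azumaya} and \Cref{thm.rat-sections} only show that the locus where the period-index bound holds is a countable union of closed subschemes of the parameter space, which says nothing about whether that union is everything. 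So the specialization/rigidity machinery of this paper can transfer counterexamples (as in \Cref{prop.period-index}) but cannot rule them out. No gap to report --- the correct answer here is that no proof exists, and that is what you gave.
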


One of the steps in de Jong's proof of Conjecture~\ref{conj.period-index} for $d = 2$ is reduction to the case, where $A$ is strongly unramified, i.e., $A$ comes from an Azuaya algebra over some $d$-dimensional projective variety; see~\cite[Section 6]{dejong}. The following proposition generalizes this step to all $d \geqslant 1$.

\begin{prop} \label{prop.period-index} Assume that the base field $k$ is algebraically closed, of characteristic zero, and uncountable. Suppose Conjecture~\ref{conj.period-index} fails for some $d \geqslant 1$. Then a strongly unramified counterexample with the same $d$ can be found. In other words, 
there exists a counterexample, where $Z$ is projective and $A$ is the generic fiber of an Azumaya algebra $\mathcal{A}$ over $Z$.
\end{prop}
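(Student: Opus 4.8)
The plan is to deduce the statement from \Cref{prop.combination} applied to the structure group $G=\GL_n/\mu_m$ for a suitable prime power $n$ and divisor $m$, using the single property ``the associated central simple algebra has index dividing $p^{a(d-1)}$''. First I would reduce to a prime-power situation: if $(Z,A)$ is a counterexample to \Cref{conj.period-index} in dimension $d$ --- so $Z$ is integral of dimension $d$, $K=k(Z)$, and $\ind(A)\nmid\per(A)^{d-1}$ --- then writing $\per(A)=\prod_i p_i^{a_i}$ and $\ind(A)=\prod_i p_i^{b_i}$ we must have $b_i>a_i(d-1)$ for some $i$, and the $p_i$-primary component of $A$, replaced by its underlying division algebra, is a division algebra over $K$ of degree $n:=p^b$ and period $m:=p^a$ with $b>a(d-1)$ (here $p=p_i$, $a=a_i$, $b=b_i$). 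Put $c:=a(d-1)$, so $n>p^c$; note that $m\mid n$ and that $K/k$ is finitely generated of transcendence degree $d$.

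Next I would set up the group and the property. Take $G:=\GL_n/\mu_m$, which is connected reductive, smooth, affine, and in good characteristic since $\Char(k)=0$. Because $H^1(F,\GL_n)$ is trivial for every field $F\supseteq k$, the sequence $1\to\mu_m\to\GL_n\to G\to 1$ gives an injection $H^1(F,G)\hookrightarrow H^2(F,\mu_m)={}_m\!\Br(F)$ with image the classes of index dividing $n$; twisting $\mathrm{M}_n(\mathcal{O})$ by a $G$-torsor realises the corresponding Brauer class by an honest algebra of degree $n$ and period dividing $m$. Let $P\subset G$ be the parabolic subgroup such that, for a $G$-torsor $E\to\Spec(F)$ with associated algebra $B$, one has $E/P\cong\SB(p^c,B)$; such a $P$ exists because $\SB(p^c,B)$ is a $\PGL_n$-homogeneous space, $G\to\PGL_n$ has central kernel, and the preimage in $G$ of the relevant parabolic of $\PGL_n$ has parabolic identity component (this is \Cref{rem.parabolic} transported from $\PGL_n$ to $G$). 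Let $\mc{P}$ be the property ``admits reduction of structure to $P$''; then $\mc{P}$ satisfies condition (*) by \Cref{ex.star}(1), and for a $G$-torsor with associated algebra $B$ we have $\mc{P}\iff\SB(p^c,B)(F)\neq\emptyset\iff\ind(B)\mid p^c$.

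Then I would apply \Cref{prop.combination}. The division algebra $A$ from the first step has period $m$ and degree equal to its index $n$, hence is the algebra of some $G$-torsor $\tau_0\colon T_0\to\Spec(K)$; since $\ind(A)=n>p^c$, the torsor $\tau_0$ does not have property $\mc{P}$. Applying \Cref{prop.combination} with $t=1$, $e=d$, $\mc{P}_1=\mc{P}$, $\tau_1=\tau_0$ produces a finitely generated extension $K_0/k$ of transcendence degree $d$ and a strongly unramified $G$-torsor $\tau_0'\to\Spec(K_0)$ without property $\mc{P}$: there is a projective $k$-variety $Z'$ (necessarily integral of dimension $d$) with $k(Z')=K_0$ and a $G$-torsor $\widetilde{T}\to Z'$ restricting to $\tau_0'$ generically. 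Twisting $\mathrm{M}_n(\mathcal{O}_{Z'})$ by $\widetilde{T}$ gives an Azumaya algebra $\mc{A}$ of degree $n$ over $Z'$ whose Brauer class is killed by $m$ (so $\per(\mc{A})\mid p^a$) and whose generic fibre $A':=\mc{A}_{K_0}$ is the algebra of $\tau_0'$. Since $\tau_0'$ lacks $\mc{P}$, we get $\ind(A')\nmid p^c=(p^a)^{d-1}\geqslant\per(A')^{d-1}$, hence $\ind(A')\nmid\per(A')^{d-1}$; thus $(Z',A')$ is a counterexample to \Cref{conj.period-index} in dimension $d$, and it is strongly unramified because $A'=\mc{A}_{K_0}$ is the generic fibre of the Azumaya algebra $\mc{A}$ over the projective variety $Z'$.

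I expect the main obstacle to be the control of the period, which is precisely the reason for working with $\GL_n/\mu_m$ rather than with $\PGL_n$: \Cref{prop.combination} only yields a strongly unramified torsor failing the chosen property, with no a priori bound on its period, so the period must be encoded in the structure group itself (as ``$\per\mid m$''), since the would-be auxiliary property ``$\per\nmid m$'' is not stable under specialization and hence does not satisfy condition (*). The remaining points --- that ``$\ind\mid p^c$'' really is reduction of structure to a \emph{parabolic} subgroup of $G$, not merely of $\PGL_n$, and that the Azumaya algebra manufactured over $Z'$ has period dividing $p^a$ --- should be routine given \Cref{rem.parabolic} and the displayed exact sequences.
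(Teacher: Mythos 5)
Your argument is correct and is essentially the paper's own proof: both apply \Cref{prop.combination} with $t=1$, $e=d$ to $G=\GL_n/\mu_m$ and to the property of having small index, which is reduction of structure to a parabolic subgroup and hence satisfies condition (*) by \Cref{ex.star}(1), the period being controlled automatically because the structure group is $\GL_n/\mu_m$. Your preliminary reduction to the prime-power case (which the paper relegates to \Cref{rem.period-index}) is a harmless variant that makes the target property a reduction to a single parabolic rather than the paper's slightly looser ``$\ind\leqslant n-1$''.
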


 The statement and proof outline of this proposition first appeared in Section 5 of the unpublished preprint~\cite{dejong-starr} by Starr and de Jong\footnote{As explained to us by de Jong, this preprint was a precursor to the paper~\cite{dejong-starr2010}, but \Cref{prop.period-index} did not make it into~\cite{dejong-starr2010}.}. Here we deduce it as an easy consequence of our \Cref{prop.combination}.

\begin{proof}[Proof of \Cref{prop.period-index}] Assume Conjecture~\ref{conj.period-index} fails for some finitely generated field extension $K_1/k$
of transcendence degree $d$
and a central simple algebra $A_1$ over $K_1$ of degree $n$ and period $m$, where $n$ does not divide $m^{d-1}$.
After replacing $A_1$ by its underlying division algebra
over $K$, we may assume that $A_1$ is a division algebra,
i.e., $\ind(A) = \deg(A) = n$.

Recall that central simple algebras of degree $n$ and exponent dividing $m$ over a field $K$ are in bijective correspondence with $G = \GL_n/\mu_m$-torsors over $K$.
Denote the $G$-torsor corresponding to $A_1$ by 
\begin{equation} \label{e.tau1}
\tau_1  \colon T_1 \to \Spec(K_1).
\end{equation}
We now fix $d$, $n$, $m$ and $G$, and apply \Cref{prop.combination} with $t = 1$ and $e = d$. Let $\mathcal{P} = \mathcal{P}_1$ be following the property 
of a $G$-torsor $\tau \colon X \to \Spec(K)$ (or equivalently, of the central simple algebra $A/K$ associated to $\tau$): $\ind(A) \leqslant n-1$. As we have seen, property $\mc{P}$ for $\tau$ is equivalent to $\tau$ admitting reduction of structure to an appropriate parabolic subgroup of $G$ (cf. \Cref{rem.parabolic}). Thus, by \Cref{ex.star}(1), property $\mc{P}$ satisfies condition (*). Moreover, the $G$-torsor \eqref{e.tau1} does not have property $\mathcal{P}$. By \Cref{prop.combination} we conclude that
there exists a finitely generated field extension $K/k$ of transcendence degree $d$ and a strongly unramified $G$-torsor $T \to \Spec(K)$
which does not satisfy condition $\mathcal{P}$. The central simple algebra $A/K$ associated to this torsor is the counterexample we are seeking. Indeed, $A$ is strongly unramified, $\ind(A) \geqslant n$ (and hence $= n$, since
$\deg(A) = n$), and $\per(A)$ divides $m$. Since $n$ does not divide $m^{d-1}$, we conclude that $n$ does not divide $\per(A)^{d-1}$, as desired.
\end{proof}

\begin{rmk} \label{rem.period-index} In the statement of
Conjecture~\ref{conj.period-index}, we may assume without loss of generality that $\ind(A) = p^r$ and $\per(A) = p^s$
for some prime $p$. If we assume that Conjecture~\ref{conj.period-index} fails in this form (for a specific prime $p$), then the assumption that $\Char(k) = 0$ in \Cref{prop.period-index} can be weakened to
$\Char(k) \neq p$.
\end{rmk}

\begin{rmk} Conjecture~\ref{conj.period-index} remains open for every $d \geqslant 3$. For a recent positive partial result in dimension $3$, see \cite[Theorem 1.3]{hotchkiss2024}. There has also been progress on arithmetic variants of the period-index problem, when $k$ is assumed to be a finite field or a $p$-adic field; see 
\cite{antieau-period-index}.
\end{rmk}

\section*{Acknowledgments} We are grateful to Angelo Vistoli for sharing his unpublished text on rational sections with us. Most of it has made its way, with Angelo's permission, into \Cref{sect.rat-sections} of this paper. We are also grateful to Benjamin Antieau, Johan de Jong and Alena Pirutka for helpful discussions.

\newcommand{\etalchar}[1]{$^{#1}$}

\end{document}